\newtheorem{rmk}{Remark}
\newtheorem{defi}{Definition}
\title{On the monotonicity and discrete maximum principle of the finite difference implementation of $C^0$-$Q^2$ finite element method
}
\author{Hao Li\thanks{Department of Mathematics,
Purdue University,
150 N. University Street,
West Lafayette, IN 47907-2067
  (\email{li2497@purdue.edu}, \email{zhan1966@purdue.edu}).}
\and Xiangxiong Zhang \footnotemark[2]}
\begin{document}

\maketitle

% REQUIRED
\begin{abstract}
We show that the fourth order accurate finite difference implementation of continuous finite element method with tensor product of quadratic polynomial basis is monotone thus satisfies the discrete maximum principle for solving a scalar variable coefficient equation $-\nabla\cdot(a\nabla u)+cu=f$ under a suitable mesh constraint. 
\end{abstract}

% REQUIRED
\begin{keywords}
Inverse positivity, high order accurate schemes, monotonicity, discrete maximum principle, variable coefficient diffusion
\end{keywords}

% REQUIRED
\begin{AMS}
 	65N30,   	 	65N06,  	65N12
\end{AMS}

\section{Introduction}
\subsection{Monotonicity and discrete maximum principle}

Consider a Poisson equation with variable coefficients and Dirichlet boundary conditions on a two dimensional rectangular domain $\Omega=(0,1)\times(0,1)$:
\begin{equation}
\label{pde-1}
 \begin{split}
 \mathcal L u\equiv -\nabla\cdot(a \nabla u)+cu=0 & \quad \mbox{on} \quad \Omega, \\
 u=g & \quad \mbox{on}\quad  \partial\Omega, 
\end{split}
\end{equation}
where $a(x,y), c(x,y)\in C^0(\bar \Omega)$ with $0< a_{\min}\leq a(x,y)\leq a_{\max}$ and $c(x,y)\geq 0$.
For a smooth function $u\in C^2(\Omega)\cap C(\bar \Omega)$, maximum principle holds  \cite{evans}:
$ \mathcal L u\leq 0 \,\, \mbox{in} \,\, \Omega \Longrightarrow \max_{\bar \Omega} u\leq \max\left\{0, \max_{\partial \Omega} u\right\},$
and in particular, 
\begin{equation}\mathcal L u= 0 \,\, \mbox{in} \,\, \Omega \Longrightarrow    |u(x,y)|\leq \max\limits_{\partial \Omega} |u|, \quad \forall (x,y)\in \Omega.  \label{elliptic-mp}
\end{equation}
% If $c(x,y)\equiv 0$, then the maximum principle becomes 
% \begin{equation}
%   -\nabla\cdot(a \nabla u) u= 0 \,\, \mbox{in} \,\, \Omega \Longrightarrow \min\limits_{\partial \Omega} u\leq  u(x,y)\leq \max\limits_{\partial \Omega} u, \quad \forall (x,y)\in \Omega.
%   \label{elliptic-mp}
% \end{equation}
 
For various purposes, it is desired to have numerical schemes to satisfy \eqref{elliptic-mp} in the discrete sense.  
 A linear approximation to $\mathcal L$ can be represented as a matrix $L_h$. The matrix $L_h$ is called {\it monotone} if its inverse has nonnegative entries, i.e., $L_h^{-1}\geq 0$. All matrix inequalities in this paper are entrywise inequalities.
One sufficient condition for the discrete maximum principle is  the {\it monotonicity} of the scheme, which was also used to prove convergence of numerical schemes, e.g.,  \cite{bramble1962formulation, ciarlet1973maximum, axelsson1990monotonicity, ferket1996finite}.

 In this paper, we will discuss the monotonicity and discrete maximum principle of the simplest finite difference implementation of the continuous finite element method with $Q^2$ basis (i.e., tensor product of quadratic polynomial) for \eqref{pde-1}, which is a fourth order accurate scheme \cite{li2019fourth}. 

\subsection{Second accurate schemes and M-matrices}
The second order centered difference $u''\approx \frac{u_{i-1}-2u_i+u_{i+1}}{\Delta x^2}$ for solving $-u''(x)=f(x), u(0)=u(1)=0$ results in a tridiagonal $(-1,2,-1)$ matrix, which is an M-matrix. Nonsingular M-matrices are inverse-positive matrices and it is the most convenient  tool for constructing inverse-positive matrices. There are many equivalent definitions or characterizations of M-matrices, see 
\cite{plemmons1977m}. 
One convenient characterization of nonsingular M-matrices are nonsingular matrices with nonpositive off-diagonal entries and positive diagonal entries, and all row sums are non-negative with at least one row sum is positive. 

The continuous finite element method with piecewise linear basis forms an M-matrix for the variable coefficient problem \eqref{pde-1} on triangular meshes under reasonable mesh constraints \cite{xu1999monotone}. The M-matrix structure in linear finite element method also holds for a nonlinear elliptic equation \cite{karatson2009discrete}. 
For solving $-\Delta u=f$ on regular triangular meshes, linear finite element method reduces to the 5-point discrete Laplacian. Linear finite element method or the 5-point discrete Laplacian is the most popular method in the literature for constructing schemes satisfying a discrete maximum principle and bound-preserving properties. 
 
  Almost all high order accurate schemes result in positive off-diagonal entries in $L_h$ for solving $-\Delta u=f$ thus $L_h$ is no longer an M-matrix. The only known exceptions are the fourth order accurate 9-point discrete Laplacian and the fourth order accurate compact finite difference scheme. 

\subsection{Existing high order accurate monotone methods for two-dimensional Laplacian}

There are at least three kinds of  high order accurate schemes which have been proven to satisfy   $L^{-1}_h\geq 0$ for the Laplacian operator $\mathcal L u=-\Delta u$:

\begin{enumerate}
 \item Both the fourth order accurate 9-point discrete Laplacian scheme \cite{bramble1962formulation, bramble1963fourth} and  the fourth order accurate compact finite difference scheme \cite{lele1992compact, doi:10.1137/18M1208551} for $-\Delta u=f$ can be written as $S\mathbf u=W\mathbf f$ with $S$ being an M-matrix and $W\geq 0$,  thus $L_h^{-1}=S^{-1}M\geq 0$.
 \item
In \cite{bramble1964finite,bramble1964new},  Bramble and Hubbard constructed a fourth order accurate finite difference discrete Laplacian operator for which $L_h$ is not an M-matrix but monotonicity $L^{-1}_h\geq 0$ is ensured through an M-matrix factorization $L_h=M_1 M_2$, i.e., $L_h$ is a product of two M-matrices. 
% In particular, Theorem 2.7 in \cite{bramble1964new} provides an approach for obtaining such a factorization. See also \cite{bohl1979inverse} for a similar approach. 

\item Finite element method with quadratic polynomial (P2 FEM) basis on a regular triangular mesh can be implemented as a finite difference scheme defined at vertices and edge centers of triangles \cite{whiteman1975lagrangian}. The error estimate of P2 FEM is third order in $L^2$-norm. The stiffness matrix is not an M-matrix but its monotonicity was proven in \cite{lorenz1977inversmonotonie}.
\end{enumerate}

For discrete maximum principle to hold in P2 FEM on a generic triangular mesh,   
it was proven in \cite{hohn1981some} that it is  necessary and sufficient to require a very strong mesh constraint, which essentially gives either regular triangulation or equilateral triangulation. Thus discrete maximum principle holds in P2 FEM on  a regular triangulation or an equilateral triangulation. 
For finite element method with cubic and higher order polynomials  on regular triangular meshes, it was shown that discrete maximum principle fails in  \cite{vejchodsky2010angle}.

 \subsection{Other known results regarding discrete maximum principle}
For one-dimensional Laplacian,  discrete maximum principle was proven for arbitrarily high order finite element method using discrete Green's function in \cite{vejchodsky2007discrete}.
 The discrete Green's function was also used to analyze P1 FEM in two dimensions \cite{druaguanescu2005failure}. 
Discontinuous coefficients were considered and a nonlinear scheme was constructed in \cite{li2001maximum}. Piecewise constant coefficient in one dimension was considered in \cite{vejchodsky2007discrete-coef}. 
A numerical study for high order FEM with very accurate Gauss quadrature in two dimensions showed that  DMP was violated on non-uniform unstructured meshes for variable coefficients  in \cite{payette2012performance}.
A more general operator $\nabla(\mathbf a \nabla u)$ with matrix coefficients $\mathbf a$ was considered for linear FEM in \cite{10.1007/978-3-642-00464-3_43}. 
See  \cite{kuzmin2009constrained} for an anisotropic computational example. 

\subsection{Existing inverse-positive approaches when $L_h$ is not an M-matrix}

In this paper, we will focus on the finite difference implementation of continuous finite element method with $Q^2$ basis (Q2 FEM), which will be reviewed in Section \ref{sec-q2fdscheme}. 
The matrix $L_h$ in such a scheme is not an M-matrix due to its off-diagonal positive entries. There are at least three methods to study whether $L^{-1}_h\geq 0$ holds when M-matrix structure is lost:
\begin{enumerate}
\item An M-matrix factorization of the form $L_h=M_1M_2$ was shown in \cite{bramble1964new} and \cite{bohl1979inverse}. In Appendix \ref{appendix-a}, we will demonstrate an M-matrix factorization  for the finite difference implementation of $Q^2$ FEM solving $-\Delta u=f$. 
\item 
Perturbation of M-matrices by positive offdiagonal entries  without losing monotonicity was discussed in \cite{bouchon2007monotonicity}. 
\item In \cite{lorenz1977inversmonotonie}, Lorenz proposed a sufficient condition for ensuring $L_h=M_1M_2$. Lorenz's condition  will be reviewed in Section \ref{sec-lorenz}.
\end{enumerate}
The main result of this paper is to prove that $L_h^{-1}\geq 0$ and a discrete maximum principle holds under some mesh constraint in the fourth order accurate finite difference implementation of $Q^2$ FEM solving \eqref{pde-1} by verifying the Lorenz's condition.

\subsection{Extensions to discrete maximum principle for parabolic equations}

Classical solutions to the parabolic equation $u_t=\nabla\cdot(a \nabla u)$
 satisfy a maximum principle  \cite{evans}. 
 With suitable boundary conditions and initial value $u(x,y,0)$ 
 such as periodic or homogeneous Dirichlet boundary conditions and $\min\limits_\Omega u(x,y,0)=0$, the solution to the initial value problem satisfies the following maximum principle:
 \begin{equation}
\min_{(x,y)} u(x,y,0) \leq u(x,y,t)\leq \max_{(x,y)} u(x,y,0).
  \label{parabolic-mp}
\end{equation}

 Now consider solving $u_t=\nabla\cdot(a \nabla u)$ with  backward Euler time discretization, then $U^{n+1}$ satisfies an elliptic equation of the form \eqref{pde-1}:
 \begin{equation}
-\nabla\cdot(a \nabla U^{n+1})+\frac{1}{\Delta t}U^{n+1}=\frac{1}{\Delta t}U^{n}.
\label{backwardeuler}
 \end{equation}
 If $S_h$ denotes  spatial discretization for $-\nabla\cdot(a\nabla u)$, then the numerical scheme can be written as 
 $U^{n+1}=(I+\Delta t S_h)^{-1}U^n$. Let $\mathbf 1=\begin{bmatrix}1 & 1 &\cdots & 1                                                                                                                                                                                                                                                                                                                                                                                                                         \end{bmatrix}^T
$, then for suitable boundary conditions usually we have $S_h \mathbf 1=\mathbf 0$ since $S_h$ approximates a differential operator. So we have $(I+\Delta t S_h)\mathbf 1=\mathbf 1$ thus $(I+\Delta t S_h)^{-1}\mathbf 1=\mathbf 1$. 
If we further have the monotonicity $(I+\Delta t S_h)^{-1}\geq 0$, then each row of the $(I+\Delta t S_h)^{-1}$ has nonnegative entries and sums to one, thus the discrete maximum principle holds $\min_j U_j^n \leq U^{n+1}_j\leq \max_j U_j^n$, which is a desired and useful property in many applications.
For instance, second order centered difference or P1 finite element method has been used to construct schemes satisfying the discrete maximum principle in solving phase field equations 
\cite{tang2016implicit,shen2016maximum,xu2019stability}. 
 In the rest of the paper, we will only focus on discussing the equation \eqref{pde-1}, even though all discussions can be extended to solving the parabolic equation with backward Euler time discretization.  
\subsection{Contributions and organization of the paper}
To the best of our knowledge, this is the first time that a high order accurate scheme under suitable mesh constraints is proven to be monotone in the sense $L_h^{-1}\geq 0$ for solving a variable coefficient $a(\mathbf x)$ in \eqref{pde-1} in two dimensions. 
For simplicity, we only discuss an uniform mesh in this paper, even though the main results can be extended to non-uniform meshes. 
However, an additional mesh constraint is expected for discrete maximum principle to hold. See 
such a mesh constraint of non-uniform meshes for  Q1 FEM in \cite{christie1984maximum} and P2 FEM for one-dimensional problem in  \cite{vejchodsky2007discrete}.

This paper is organized as follows. In Section \ref{sec-q2fdscheme}, we describe the fourth order accurate finite difference implementation of $C^0$-$Q^2$ finite element method. In Section \ref{sec-cond}, we review the sufficient conditions to ensure monotonicity and discrete maximum principle. 
In Section \ref{sec-main}, we prove that the fourth order accurate finite difference implementation of $C^0$-$Q^2$ finite element method is monotone under some mesh constraints. Numerical tests are given in Section \ref{sec-test}.
Concluding remarks are given in Section \ref{sec-remark}.

\section{Finite difference implementation of $C^0$-$Q^2$ finite element method}
\label{sec-q2fdscheme}
Consider solving the following elliptic equation on $\Omega=(0,1)\times(0,1)$ with 
Dirichlet boundary conditions:
\begin{equation}
\label{pde-3}
 \begin{split}
 \mathcal L u\equiv -\nabla\cdot(a \nabla u)+cu=f & \quad \mbox{on} \quad \Omega, \\
 u=g & \quad \mbox{on}\quad  \partial\Omega. 
\end{split}
\end{equation}
Assume there is a function $\bar g \in H^1(\Omega)$ as an extension of $g$ so that $\bar g|_{\partial \Omega} = g$.
 The  variational form  of \eqref{pde-1} is to find $\tilde u = u - \bar g \in H_0^1(\Omega)$ satisfying
\begin{equation}\label{nonhom-var}
 \mathcal A(\tilde u, v)=(f,v) - \mathcal A(\bar g,v) ,\quad \forall v\in H_0^1(\Omega),
 \end{equation}
 where $\mathcal A(u,v)=\iint_{\Omega} a \nabla u \cdot \nabla v dx dy+\iint_{\Omega} c u v dx dy$, $ (f,v)=\iint_{\Omega}fv dxdy.$

   \begin{figure}[h]
 \subfigure[The  quadrature points and a FEM mesh]{\includegraphics[scale=0.8]{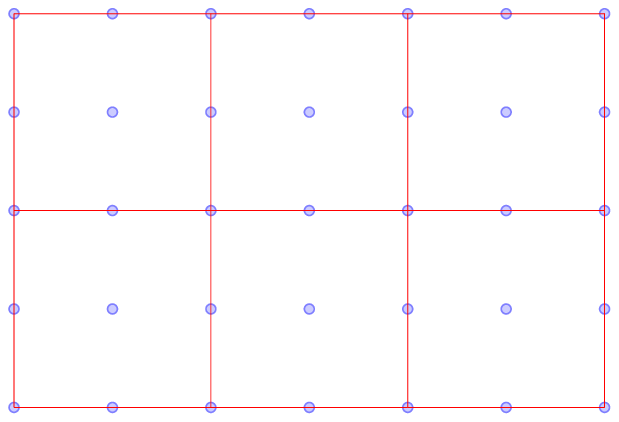} }
 \hspace{.6in}
 \subfigure[The corresponding finite difference grid]{\includegraphics[scale=0.8]{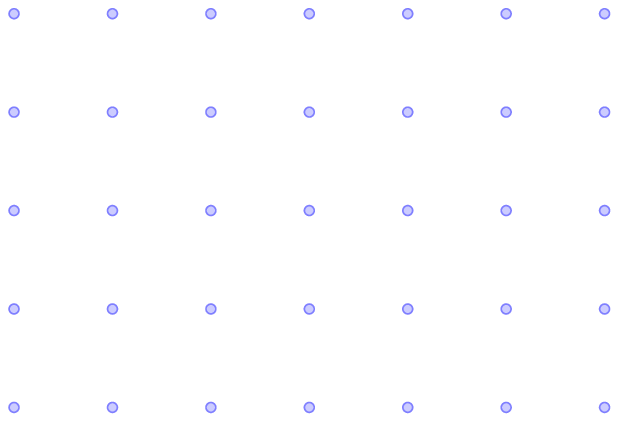}}
\caption{An illustration of $Q^2$ element and the $3\times3$ Gauss-Lobatto quadrature. }
\label{mesh}
 \end{figure}

 Let $h$ be the mesh size of the rectangular mesh and  $V_0^h\subseteq H^1_0(\Omega)$ be the continuous finite element space consisting of piecewise $Q^2$ polynomials (i.e., tensor product of piecewise quadratic polynomials), then the most convenient implementation of $C^0$-$Q^2$ finite element method is to use $3\times3$ Gauss-Lobatto quadrature rule for all the integrals,  see Figure \ref{mesh}.
 Such a numerical scheme can be defined as:  find $ u_h \in V_0^h$ satisfying
\begin{equation}\label{nonhom-var-num3}
\mathcal A_h( u_h, v_h)=\langle f,v_h \rangle_h - \mathcal A_h( g_I,v_h) ,\quad \forall v_h\in V_0^h,
\end{equation}
where   $\mathcal A_h(u_h,v_h)$  and $\langle f,v_h\rangle_h$ denote using tensor product of $3$-point Gauss Lobatto quadrature for integrals $\mathcal A(u_h,v_h)$ and $(f,v_h)$ respectively, and $g_I$ is the piecewise $Q^2$ Lagrangian interpolation polynomial at the $3\times 3$ quadrature points shown in 
Figure \ref{mesh} of the following function:
\[g(x,y)=\begin{cases}
   0,& \mbox{if}\quad (x,y)\in (0,1)\times(0,1),\\
   g(x,y),& \mbox{if}\quad (x,y)\in \partial\Omega.\\
  \end{cases}
\] 
Then $\bar u_h =   u_h + g_I$ is the numerical solution for the problem \eqref{pde-3}.
We emphasize that \eqref{nonhom-var-num3} is not a straightforward approximation to \eqref{nonhom-var} since $\bar g$ is never used. 
It was proven in \cite{li2019fourth} that the  scheme  \eqref{nonhom-var-num3} is fourth order accurate 
if coefficients and exact solutions are smooth. 
Notice that $\bar u_h$ satisfies:
\begin{equation}
\mathcal A_h(  \bar u_h, v_h)=\langle f,v_h \rangle_h,\quad \forall v_h\in V_0^h.
\label{scheme-1}
\end{equation}
See \cite{li2019fourth} for the detailed finite difference implementation and proof of fourth order accuracy for the scheme \eqref{nonhom-var-num3}. 
\subsection{One-dimensional case}

Now consider the one-dimensional Dirichlet boundary value problem:
\begin{align*}
-(a u')' +c u= & f \textrm{ on } (0,1), \\
 u(0) = \sigma_0,  \quad & u(1) =  \sigma_1.
\end{align*}

Consider a uniform mesh $x_i = ih$, $i = 0,1,\dots, n+1 $, $h=\frac{1}{n+1}$. Assume $n$ is odd and let $M=\frac{n+1}{2}$. Define intervals $I_k =[x_{2k},x_{2k+2}]$ for $k=0,\dots,M-1$ as a finite element mesh for $P^2$ basis. Define 
$$V^h=\{v\in C^0([0,1]): v\in P^2(I_k), k = 0,\dots, M-1\}.$$ Let $\{\phi_i\}_{i=0}^{n+1} 	\subset V^h $ be a basis for $V^h$ so that $\phi_i(x_j)= \delta_{ij}, \,i,j=0,1,\dots,n+1$. Let $u_i=u_h(x_i)$, $u_0=\sigma_0$ and $u_{n+1}=\sigma_1$, then $u_h, \bar u_h\in V^h$ can be represented as
\[u_h(x)=\sum_{i=1}^{n}u_i \phi_i(x), \quad \bar u_h(x)=\sum_{i=0}^{n+1}u_i \phi_i(x). \]

Let $f_j=f(x_j)$, then \eqref{scheme-1} becomes 
\[\langle a u_h',\phi_i'\rangle_h+\langle c u_h,\phi_i\rangle_h = \langle f,\phi_i\rangle_h, \quad i=1,\dots,n; u_0=\sigma_0, u_{n+1}=\sigma_1, \]
which are
$$\sum_{j=0}^{n+1} u_j \left(\langle a \phi_j',\phi_i'\rangle_h+ \langle c \phi_j,\phi_i\rangle_h\right) =\sum_{j=0}^{n+1} f_j \langle  \phi_j,\phi_i\rangle_h, \quad i=1,\dots,n; u_0=\sigma_0, u_{n+1}=\sigma_1.$$
The matrix form is $S\bar{\mathbf u}=M\bar{\mathbf f}$ where 
\[\bar{\mathbf u}=\begin{bmatrix}
                 u_0 & u_1 & u_2 \cdots & u_n & u_{n+1}  
                  \end{bmatrix}^T,\quad 
                  \bar {\mathbf f}=\begin{bmatrix}
                \sigma_0 & f_1 & f_2 \cdots & f_n  & \sigma_1
                  \end{bmatrix}^T.
\]
 The scheme can be written as $\mathcal L_h (\bar{\mathbf u})=\bar{\mathbf f}$. The linear operator $\mathcal L_h$ has the matrix representation 
 $L_h=M^{-1}S$.
 
 For the Laplacian $\mathcal L u=-u''$, we have
 \begin{subequations}
 \label{p2fd-laplacian-1dscheme}
 \begin{align}
  \mathcal L_h (\bar{\mathbf u})_0&=u_0=\sigma_0, \quad 
  \mathcal L_h (\bar{\mathbf u})_{n+1}=u_{n+1}=\sigma_1, \\
  \mathcal L_h (\bar{\mathbf u})_i&=\frac{-u_{i-1}+2u_i-u_{i+1}}{h^2}=f_i, \quad \mbox{if $i$ is odd, i.e., $x_i$ is a cell center},\\
  \mathcal L_h (\bar{\mathbf u})_i&=
\frac{u_{i-2}-8u_{i-1}+14u_i-8u_{i+1}+u_{i+2}}{4h^2}=f_i,\quad \mbox{if $i$ is even, i.e., $x_i$ is a cell end}.
  \end{align}
 \end{subequations}

 For the variable coefficient operator $\mathcal L u=-(a u')'+cu$, we have 
 
  \begin{subequations}
 \label{p2fd-vcoef-1dscheme}
 \begin{equation}
  \label{p2fd-vcoef-1dscheme-1}
  \mathcal L_h (\bar{\mathbf u})_0 =u_0=\sigma_0, \quad 
  \mathcal L_h (\bar{\mathbf u})_{n+1}=u_{n+1}=\sigma_1,
 \end{equation}
  and if $x_i$ is a cell center, we have
   \begin{equation}
     \label{p2fd-vcoef-1dscheme-2}
  \mathcal L_h (\bar{\mathbf u})_i=\frac{-(3a_{i-1}+a_{i+1})u_{i-1}+4(a_{i-1}+a_{i+1})u_i-(a_{i-1}+3a_{i+1})u_{i+1}}{4h^2}+c_i u_i=f_i;
 \end{equation}
  and if $x_i$ is a cell end, then 
   \begin{align}  
  \notag
\mathcal L_h (\bar{\mathbf u})_i=\frac{(3a_{i-2}-4a_{i-1}+3a_i)u_{i-2}-(4a_{i-2}+12a_i)u_{i-1}+(a_{i-2}+4a_{i-1}+18 a_i+4a_{i+1}+a_{i+2})u_i}{8h^2}\\
   +\frac{-(12a_{i}+4a_{i+2})u_{i+1}+(3a_{i+2}-4a_{i+1}+3 a_i)u_{i+2}}{8h^2}+c_iu_i=f_i.  \label{p2fd-vcoef-1dscheme-3} \end{align}
    \end{subequations}

 \subsection{Two-dimensional case}  
 
Consider a uniform grid $(x_i,y_j)$ for a rectangular domain $[0,1]\times[0,1]$
where
$x_i = ih$, $i = 0,1,\dots, n+1$
and
$y_j = jh$, $j = 0,1,\dots, n+1$, $h=\frac{1}{n+1}$, where $n$ must be odd.
Let $u_{ij}$ denote the numerical solution at $(x_i, y_j)$.
Let $\mathbf u$ denote an abstract vector consisting of $u_{ij}$ for $i,j=1,2,\cdots,n$. Let $\bar{\mathbf u}$ denote an abstract vector consisting of $u_{ij}$ for $i,j=0,1,2,\cdots,n,n+1$.
Let $\bar{\mathbf f}$ denote an abstract vector consisting of $f_{ij}$ for $i,j=1,2,\cdots,n$ and the boundary condition $g$ at the boundary grid points. 

The scheme \eqref{scheme-1} for solving \eqref{pde-3} can still be written as 
$\mathcal L_h(\bar{\mathbf u})=\bar{\mathbf f}$.

 \subsubsection{Two-dimensional  Laplacian}  
For the Laplacian $\mathcal L u=-\Delta u$, $\mathcal L_h(\bar{\mathbf u})$ can be expressed as the following.
If $(x_i, y_j)\in \partial \Omega$, then 
 \[ \mathcal L_h(\bar{\mathbf u})_{i,j}=u_{i,j}=g_{i,j}.\]
If $(x_i, y_j)$ is an interior grid point 
and a cell center , $\mathcal L_h(\bar{\mathbf u})_{i,j}$ is equal to
\begin{subequations}
\label{q2fd-2D-laplacian}
\begin{equation}
\frac{-u_{i-1,j}-u_{i+1,j}+4u_{i,j}-u_{i,j+1}-u_{i+1,j}}{h^2}=f_{i,j}.
\end{equation}
For interior grid points, there are three types: cell center, edge center and knots. See Figure \ref{fig-points}. 
If $(x_i, y_j)$ is an interior grid point 
and an edge center for an edge parallel to x-axis, $\mathcal L_h(\bar{\mathbf u})_{i,j}$ is equal to
\begin{equation}
\frac{-u_{i-1,j}+2u_{i,j}-u_{i+1,j}}{h^2}+\frac{u_{i,j-2}-8u_{i,j-1}+14u_{i,j}-8u_{i,j+1}+u_{i,j+2}}{4h^2}=f_{i,j}.
\end{equation}
If $(x_i, y_j)$ is an interior grid point 
and an edge center for an edge parallel to y-axis, $\mathcal L_h(\bar{\mathbf u})_{i,j}$ is similarly defined as above.
If $(x_i, y_j)$ is an interior grid point 
and a knot  $(x_i, y_j)$, $\mathcal L_h(\bar{\mathbf u})_{i,j}$ is equal to
\begin{equation}
\frac{u_{i-2,j}-8u_{i-1,j}+14u_{i,j}-8u_{i+1,j}+u_{i+2,j}}{4h^2}+\frac{u_{i,j-2}-8u_{i,j-1}+14u_{i,j}-8u_{i,j+1}+u_{i,j+2}}{4h^2}=f_{i,j}.
\end{equation}
\end{subequations}

\begin{figure}
\begin{center}
 \scalebox{0.7}{
\begin{tikzpicture}[samples=100, domain=-3:3, place/.style={circle,draw=blue!50,fill=blue,thick,
inner sep=0pt,minimum size=1.5mm},transition/.style={circle,draw=red,fill=red,thick,inner sep=0pt,minimum size=2mm}
,point/.style={circle,draw=black,fill=black,thick,inner sep=0pt,minimum size=2mm}]

\draw[color=red] (-2,-2)--(-2,2);
\draw[color=red] (-2,-2)--(2,-2);
\draw[color=red] (2,-2)--(2, 2);
\draw[color=red] (-2,2)--(2, 2);
\node at ( -2,-2) [place] {};
\node at ( 0,-2) [point] {};
\node at ( 2,-2) [place] {};

\node at ( -2,0) [point] {};
\node at ( 0,0) [transition] {};
\node at ( 2,0) [point] {};

\node at ( -2,2) [place] {};
\node at ( 0,2) [point] {};
\node at ( 2,2) [place] {};
 \end{tikzpicture}
}
\caption{Three types of interior grid points: red cell center, blue knots and black edge centers for a finite element cell. }
\end{center}
\label{fig-points}
\end{figure}
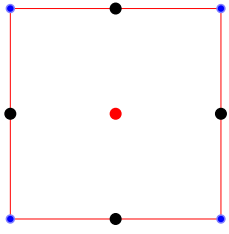

If ignoring the denominator $h^2$, then the stencil of the operator $\mathcal L_h$ 
at interior grid points can be represented as:
 \[ \quad \mbox{  cell center}  \begin{array}{ccc}
& -1& \\
-1 & 4 & -1\\
& -1& 
\end{array}\qquad
\mbox{knots}  \begin{array}{ccccc}
&& \frac14& &\\
&& -2& &\\
\frac14& -2 & 7 & -2 &\frac14\\
&& -2& &\\
&& \frac14& &
\end{array}
\]
\[
\mbox{edge center (edge parallel to $y$-axis)}  \begin{array}{ccccc}
&& -1& &\\
\frac14& -2 & \frac{11}{2} & -2 &\frac14\\
&& -1& &
\end{array}\]
\[
\mbox{edge center (edge parallel to $x$-axis)}  \begin{array}{ccccc}
&& \frac14& &\\
&& -2& &\\
& -1 & \frac{11}{2} & -1 &\\
&& -2& &\\
&& \frac14& &
\end{array}\]
\subsection{Two-dimensional variable coefficient case}
For $\mathcal L u=- \nabla\cdot (a \nabla u)+cu$, $\mathcal L_h(\bar{\mathbf u})$ will have exactly the same stencil size  as the Laplacian case. At boundary points $(x_i, y_j)\in \partial \Omega$, $\mathcal L_h(\bar{\mathbf u})=\bar{\mathbf f}$ becomes 
\begin{subequations}
   \label{p2fd-vcoef-2dscheme}
 \begin{equation}
  \label{p2fd-vcoef-2dscheme-1}
 \mathcal L_h(\bar{\mathbf u})_{i,j}=u_{i,j}=g_{i,j}.
 \end{equation}
  
  If $(x_i, y_j)$ is an interior grid point 
and a cell center, $\mathcal L_h(\bar{\mathbf u})_{i,j}$ is equal to
\begin{align}
\frac{-(3a_{i-1,j}+a_{i+1,j})u_{i-1,j}+4(a_{i-1,j}+a_{i+1,j})u_{i,j}-(a_{i-1,j}+3a_{i+1,j})u_{i+1,j}}{4h^2}\\
+\frac{-(3a_{i,j-1}+a_{i,j+1})u_{i,j-1} 
+4(a_{i,j-1}+a_{i,j+1})u_{i,j}-(a_{i,j-1}+3a_{i,j+1})u_{i,j+1}}{4h^2}+c_{ij}u_{ij}.  \notag 
 \end{align}

If $(x_i, y_j)$ is an interior grid point 
and a knot, $\mathcal L_h(\bar{\mathbf u})_{i,j}$ is equal to
\begin{align}
\frac{(3a_{i-2,j}-4a_{i-1,j}+3a_{i,j})u_{i-2,j}-(4a_{i-2,j}+12a_{i,j})u_{i-1,j}+(a_{i-2,j}+4a_{i-1,j}+18 a_{i,j}+4a_{i+1,j}+a_{i+2,j})u_{i,j}}{8h^2} \\
+\frac{-(12a_{i,j}+4a_{i+2,j})u_{i+1,j}+(3a_{i+2,j}-4a_{i+1,j}+3 a_{i,j})u_{i+2,j}}{8h^2} \notag  \\
+\frac{(3a_{i,j-2}-4a_{i,j-1}+3a_{i,j})u_{i,j-2}-(4a_{i,j-2}+12a_{i,j})u_{i,j-1}+(a_{i,j-2}+4a_{i,j-1}+18 a_{i,j}+4a_{i,j+1}+a_{i,j+2})u_{i,j}}{8h^2} \notag \\
+\frac{-(12a_{i,j}+4a_{i,j+2})u_{i,j+1}+(3a_{i,j+2}-4a_{i,j+1}+3 a_{i,j})u_{i,j+2}}{8h^2}+c_{ij}u_{ij}. \notag 
 \end{align}

If $(x_i, y_j)$ is an interior grid point 
and an edge center for an edge parallel to $y$-axis, $\mathcal L_h(\bar{\mathbf u})_{i,j}$ is equal to
\begin{align}\frac{(3a_{i-2,j}-4a_{i-1,j}+3a_{i,j})u_{i-2,j}-(4a_{i-2,j}+12a_{i,j})u_{i-1,j}+(a_{i-2,j}+4a_{i-1,j}+18 a_{i,j}+4a_{i+1,j}+a_{i+2,j})u_{i,j}}{8h^2}\notag \\
+\frac{-(12a_{i,j}+4a_{i+2,j})u_{i+1,j}+(3a_{i+2,j}-4a_{i+1,j}+3 a_{i,j})u_{i+2,j}}{8h^2}\\
+\frac{-(3a_{i,j-1}+a_{i,j+1})u_{i,j-1} 
+4(a_{i,j-1}+a_{i,j+1})u_{i,j}-(a_{i,j-1}+3a_{i,j+1})u_{i,j+1}}{4h^2}
 +c_{ij}u_{ij}. \notag 
 \end{align}
  If $(x_i, y_j)$ is an interior grid point 
and an edge center for an edge parallel to $x$-axis, 
 $\mathcal L_h(\bar{\mathbf u})_{i,j}$ is equal to
\begin{align}\frac{(3a_{i,j-2}-4a_{i,j-1}+3a_{i,j})u_{i,j-2}-(4a_{i,j-2}+12a_{i,j})u_{i,j-1}+(a_{i,j-2}+4a_{i,j-1}+18 a_{i,j}+4a_{i,j+1}+a_{i,j+2})u_{i,j}}{8h^2}\notag \\
+\frac{-(12a_{i,j}+4a_{i,j+2})u_{i,j+1}+(3a_{i,j+2}-4a_{i,j+1}+3 a_{i,j})u_{i,j+2}}{8h^2}\\
+\frac{-(3a_{i-1,j}+a_{i+1,j})u_{i-1,j} 
+4(a_{i-1,j}+a_{i+1,j})u_{i,j}-(a_{i-1,j}+3a_{i+1,j})u_{i+1,j}}{4h^2}
 +c_{ij}u_{ij}. \notag 
 \end{align}
 
 \end{subequations}

\section{Sufficient conditions for monotonicity and discrete maximum principle} 
\label{sec-cond}
\subsection{Discrete maximum principle} 

Assume there are $N$ grid points in the domain $\Omega$ and $N^\partial$ grid points on 
$\partial\Omega$.
Define
$$\mathbf u=\begin{pmatrix}
u_1& u_2& \cdots& u_N             
            \end{pmatrix}^T, \quad \mathbf u^\partial=\begin{pmatrix}
u^\partial_1& u^\partial_2& \cdots& u^\partial_{N^\partial}             
            \end{pmatrix}^T,$$
\[\tilde{\mathbf u}=\begin{pmatrix}
u_1& u_2& \cdots& u_N & u^\partial_1& u^\partial_2& \cdots& u^\partial_{N^\partial}            
            \end{pmatrix}^T.\]
 A finite difference scheme can be written as 
 \begin{align*}
\mathcal L_h (\tilde{\mathbf u})_i=\sum_{j=1}^N b_{ij}u_j+\sum_{j=1}^{N^\partial}  b^\partial_{ij}u^\partial_j=&f_i,\quad 1\leq i\leq N,\\
  u^\partial_i=&g_i, \quad 1\leq i\leq N^\partial.  
 \end{align*}
 The matrix form is 
 \[\tilde L_h \tilde{\mathbf u}=\tilde{\mathbf f},\tilde L_h =\begin{pmatrix}
                       L_h & B^\partial\\
                       0 & I
                      \end{pmatrix}, \tilde{\mathbf u}=\begin{pmatrix}
                       \mathbf u\\ \mathbf u^\partial \end{pmatrix},
                       \tilde{\mathbf f}=\begin{pmatrix}
                       \mathbf f\\ \mathbf g \end{pmatrix}.
\]
The discrete maximum principle is 
\begin{equation}
\mathcal L_h (\tilde{\mathbf u})_i\leq 0, 1\leq i \leq N\Longrightarrow \max_i u_i\leq \max\{0, \max_i u_i^\partial\} 
\label{dmp}
\end{equation}
which implies 
\[\mathcal L_h (\tilde{\mathbf u})_i= 0, 1\leq i \leq N\Longrightarrow |u_i|\leq \max_i |u_i^\partial|.   \]

 The following result was proven in \cite{ciarlet1970discrete}:
\begin{theorem}
 A finite difference operator $\mathcal L_h$ satisfies the discrete maximum principle 
 \eqref{dmp} if  $\tilde L_h^{-1}\geq 0$ and  all row sums of $\tilde L_h$ are non-negative. 
 \end{theorem}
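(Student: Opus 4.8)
The plan is to run the classical maximum-principle comparison argument at the discrete level, using only the two hypotheses: $\tilde L_h^{-1}\geq 0$ and non-negativity of all row sums of $\tilde L_h$. First I would record that $\tilde L_h^{-1}\geq 0$ in particular makes $\tilde L_h$ nonsingular, so the discrete problem is well posed, and that if $\mathbf 1$ denotes the all-ones vector of length $N+N^\partial$ then $\tilde L_h\mathbf 1$ is exactly the vector of row sums of $\tilde L_h$, hence $\tilde L_h\mathbf 1\geq 0$ by hypothesis. Since $\tilde L_h$ has the block lower-triangular form given above, its interior rows contribute the first $N$ entries of $\tilde L_h\mathbf 1$ and the identity block contributes the last $N^\partial$ entries, all of which are $\geq 0$ (in fact the boundary entries equal $1$).

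Next, assume $\mathcal L_h(\tilde{\mathbf u})_i\leq 0$ for $1\leq i\leq N$; in the block notation this says the interior block of $\tilde{\mathbf f}$ satisfies $\mathbf f\leq 0$ entrywise, while the boundary block gives $\mathbf u^\partial=\mathbf g$. Set $m:=\max\{0,\ \max_{1\leq i\leq N^\partial}u_i^\partial\}\geq 0$ and test the vector $\tilde{\mathbf w}:=\tilde{\mathbf u}-m\mathbf 1$. Using linearity, $\tilde L_h\tilde{\mathbf w}=\tilde{\mathbf f}-m\,\tilde L_h\mathbf 1$: the first $N$ entries equal $\mathbf f$ minus $m$ times the (non-negative) interior row sums, hence are $\leq 0$ because $m\geq 0$ and $\mathbf f\leq 0$; the last $N^\partial$ entries equal $\mathbf u^\partial-m\mathbf 1\leq 0$ by the choice of $m$. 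Thus $\tilde L_h\tilde{\mathbf w}\leq 0$, and multiplying through by $\tilde L_h^{-1}\geq 0$ gives $\tilde{\mathbf w}=\tilde L_h^{-1}\bigl(\tilde L_h\tilde{\mathbf w}\bigr)\leq 0$, i.e. $\tilde{\mathbf u}\leq m\mathbf 1$; restricting to interior indices yields $\max_{1\leq i\leq N}u_i\leq m=\max\{0,\max_i u_i^\partial\}$, which is precisely \eqref{dmp}.

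Finally, for the displayed consequence I would apply \eqref{dmp} to both $\tilde{\mathbf u}$ and $-\tilde{\mathbf u}$ — permissible because $\mathcal L_h$ is linear, so $\mathcal L_h(\tilde{\mathbf u})_i=0$ forces $\mathcal L_h(\pm\tilde{\mathbf u})_i\leq 0$ — and combine the two bounds to obtain $\max_i u_i\leq\max\{0,\max_i u_i^\partial\}$ and $\max_i(-u_i)\leq\max\{0,\max_i(-u_i^\partial)\}$, hence $|u_i|\leq\max\{0,\max_i|u_i^\partial|\}=\max_i|u_i^\partial|$.

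I do not expect a genuine obstacle here: the content is the discrete comparison principle, and everything reduces to sign bookkeeping once the correct test vector $m\mathbf 1$ is identified. The only subtlety worth flagging is the role of the constant $0$ in $m=\max\{0,\dots\}$: it guarantees $m\geq 0$, which is exactly what makes $m\,\tilde L_h\mathbf 1\geq 0$ usable even when some interior row sums are strictly positive (as happens when $c>0$ in \eqref{pde-3}); when all interior row sums vanish one could instead take $m=\max_i u_i^\partial$ directly.
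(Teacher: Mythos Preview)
Your argument is correct and is precisely the classical comparison proof: subtract the constant $m=\max\{0,\max_i u_i^\partial\}$, use non-negative row sums to make the right-hand side $\leq 0$, then apply $\tilde L_h^{-1}\geq 0$. One cosmetic slip: $\tilde L_h=\begin{pmatrix}L_h & B^\partial\\ 0 & I\end{pmatrix}$ is block \emph{upper}-triangular, not lower-triangular, though this does not affect anything you wrote.

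As for comparison with the paper: the paper does not actually supply a proof of this theorem; it simply states the result and attributes it to Ciarlet \cite{ciarlet1970discrete}. Your proof is essentially the standard argument one finds in that reference, so there is nothing further to contrast.
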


Let $\bar{\mathbf u}$ and $\bar{\mathbf f}$ be the same vectors as defined in 
Section \ref{sec-q2fdscheme}. For the same finite difference scheme, the matrix form can also be written as
\[\bar{L}_h \bar{\mathbf u}=\bar{\mathbf f}.\]
Notice that there exist two  permutation matrices $P_1$ and $P_2$ such that $\bar{\mathbf u}=P_1\tilde{\mathbf u}$ and $\bar{\mathbf f}=P_2\tilde{\mathbf f}$. Since the matrix vector form of the same scheme is also $\tilde L_h \tilde{\mathbf u}=\tilde{\mathbf f}$, we obtain $P_2^{-1}\bar{L}_hP_1=\tilde L_h$. 
Notice that a permutation matrix $P$ is  inverse-positive and the signs of row sums will not be altered after multiplying $P$ to $\tilde L_h$. Thus we have 
\begin{theorem}
 If $\bar{L}_h$ is inverse-positive and row sums of  $\bar{L}_h$ are non-negative, then  $\mathcal L_h$ satisfies the discrete maximum principle
 \eqref{dmp}. 
\end{theorem}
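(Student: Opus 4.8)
The plan is to reduce the statement to the preceding theorem (the characterization from \cite{ciarlet1970discrete}) by exploiting the fact that $\bar L_h$ and $\tilde L_h$ encode the \emph{same} finite difference scheme, only with the grid points enumerated in a different order. First I would make the relation between the two matrices precise: there are permutation matrices $P_1,P_2$ with $\bar{\mathbf u}=P_1\tilde{\mathbf u}$ and $\bar{\mathbf f}=P_2\tilde{\mathbf f}$, and substituting into $\bar L_h\bar{\mathbf u}=\bar{\mathbf f}$ gives $\bar L_h P_1\tilde{\mathbf u}=P_2\tilde{\mathbf f}$ for every admissible right-hand side, hence $\tilde L_h=P_2^{-1}\bar L_h P_1$, equivalently $\tilde L_h^{-1}=P_1^{-1}\bar L_h^{-1}P_2$.

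Next I would invoke two elementary facts about a permutation matrix $P$: (i) $P\geq 0$ and $P^{-1}=P^T\geq 0$, so $P$ is inverse-positive; and (ii) $P\mathbf 1=\mathbf 1$, where $\mathbf 1$ is the all-ones vector, i.e. a reordering of coordinates fixes $\mathbf 1$. From (i) and the hypothesis $\bar L_h^{-1}\geq 0$, the identity $\tilde L_h^{-1}=P_1^{-1}\bar L_h^{-1}P_2$ exhibits $\tilde L_h^{-1}$ as a product of three entrywise-nonnegative matrices, so $\tilde L_h^{-1}\geq 0$. From (ii), the vector of row sums of $\tilde L_h$ is $\tilde L_h\mathbf 1=P_2^{-1}\bar L_h P_1\mathbf 1=P_2^{-1}(\bar L_h\mathbf 1)$, which is merely a reordering of the vector of row sums of $\bar L_h$; since the latter is nonnegative by hypothesis, so is the former.

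Finally, with $\tilde L_h^{-1}\geq 0$ and all row sums of $\tilde L_h$ nonnegative, the theorem of \cite{ciarlet1970discrete} stated above immediately yields that $\mathcal L_h$ satisfies the discrete maximum principle \eqref{dmp}, which completes the argument.

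As for the main obstacle: there is essentially none of substance in this particular statement — the content is just bookkeeping about which ordering of grid points one uses, and the genuine analytic work (monotonicity of $\bar L_h$ and nonnegativity of its row sums) is taken as hypothesis here. The only points deserving a moment's care are that right-multiplication by $P_1$ does not disturb row sums (this is exactly fact (ii), via $P_1\mathbf 1=\mathbf 1$) and that a product of entrywise-nonnegative matrices is entrywise-nonnegative. The real difficulty of the paper lies in \emph{verifying} the hypotheses $\bar L_h^{-1}\geq 0$ and $\bar L_h\mathbf 1\geq 0$ for the actual $C^0$-$Q^2$ stencils under a suitable mesh constraint, but that is the business of the later sections rather than of this theorem.
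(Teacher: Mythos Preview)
Your proposal is correct and follows essentially the same route as the paper: relate $\tilde L_h$ and $\bar L_h$ via permutation matrices, use that permutation matrices are inverse-positive and preserve row sums, and then invoke the preceding theorem from \cite{ciarlet1970discrete}. Your write-up is in fact a more detailed version of the paper's terse argument.
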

Notice that $\tilde L_h^{-1} =\begin{pmatrix}
                       L_h^{-1} & -L_h^{-1}B^\partial\\
                       0 & I
                      \end{pmatrix}$, thus we have
\begin{theorem}
\label{theorem-fullandsmallmatrix}
 If $\bar{L}_h^{-1}\geq 0$, then $\tilde L_h^{-1}\geq 0 $ thus ${L}_h^{-1}\geq 0$. 
\end{theorem}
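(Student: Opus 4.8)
The statement to prove is Theorem \ref{theorem-fullandsmallmatrix}: if $\bar L_h^{-1}\geq 0$, then $\tilde L_h^{-1}\geq 0$, and consequently $L_h^{-1}\geq 0$. Both $\tilde L_h$ and $\bar L_h$ are matrix representations of the same finite difference operator $\mathcal L_h$, differing only by the ordering of the unknowns (interior points then boundary points, versus the lexicographic grid ordering $\bar{\mathbf u}$). The plan is to exploit this permutation relationship together with the block-triangular structure of $\tilde L_h$.

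\textbf{Step 1.} Recall from the text that $P_2^{-1}\bar L_h P_1 = \tilde L_h$ for the two permutation matrices $P_1,P_2$ with $\bar{\mathbf u} = P_1\tilde{\mathbf u}$, $\bar{\mathbf f}=P_2\tilde{\mathbf f}$. Inverting, $\tilde L_h^{-1} = P_1^{-1}\bar L_h^{-1}P_2$. Since a permutation matrix and its inverse are entrywise nonnegative, and the product of nonnegative matrices is nonnegative, the hypothesis $\bar L_h^{-1}\geq 0$ immediately gives $\tilde L_h^{-1} = P_1^{-1}\bar L_h^{-1}P_2 \geq 0$. This is the first conclusion.

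\textbf{Step 2.} For the second conclusion, use the explicit block form given just above the theorem statement:
\[
\tilde L_h^{-1} = \begin{pmatrix} L_h^{-1} & -L_h^{-1}B^\partial \\ 0 & I \end{pmatrix},
\]
which follows from the block-triangular structure $\tilde L_h = \left(\begin{smallmatrix} L_h & B^\partial \\ 0 & I\end{smallmatrix}\right)$ by direct verification (multiply the two block matrices and check one gets the identity). Since $\tilde L_h^{-1}\geq 0$ entrywise, in particular its top-left block is nonnegative, i.e. $L_h^{-1}\geq 0$. This completes the proof.

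\textbf{Remarks on difficulty.} There is essentially no obstacle here: the argument is a two-line consequence of (i) permutation matrices being inverse-positive and closed under products, and (ii) reading off the $(1,1)$ block of an already-displayed inverse. The only point requiring a word of care is justifying the block inverse formula for $\tilde L_h^{-1}$, but this is routine block elimination and is already asserted in the excerpt, so it may simply be cited. One might also note explicitly that $\bar L_h$ being invertible (implicit in the hypothesis $\bar L_h^{-1}\geq 0$) forces $L_h$ to be invertible, since $\det \bar L_h = \pm \det \tilde L_h = \pm \det L_h$; this makes the block formula well-defined.
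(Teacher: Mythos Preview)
Your proposal is correct and follows precisely the route the paper takes: use the permutation identity $P_2^{-1}\bar L_h P_1=\tilde L_h$ (stated just above the theorem) to pass nonnegativity from $\bar L_h^{-1}$ to $\tilde L_h^{-1}$, and then read off $L_h^{-1}\geq 0$ from the displayed block formula for $\tilde L_h^{-1}$. The paper leaves these steps implicit; you have simply written them out, including the minor observation that invertibility of $L_h$ is inherited.
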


Let $\mathbf 1$ denote a vector of suitable size with $1$ as entries, then for all schemes in Section \ref{sec-q2fdscheme},  $\mathcal L_h(\mathbf 1)\geq 0$, which implies the row sums of $\bar{L}_h$ are non-negative. Thus from now on, we only need to discuss the monotonicity of the matrix $\bar{L}_h$.
\subsection{Characterizations of nonsingular M-matrices}
M-matrices belong to the set of Z-matrices which are matrices  with nonpositive off-diagonal entries. 
Nonsingular M-matrices are always inverse-positive. See \cite{plemmons1977m} for the definition and various characterization of nonsingular M-matrices.
The following is a convenient sufficient condition to characterize nonsingular M-matrices:
\begin{theorem}
\label{rowsumcondition-thm}
For a real square matrix $A$  with positive diagonal entries and non-positive off-diagonal entries, $A$ is a nonsingular M-matrix if and only if all the row sums of $A$ are non-negative and at least one row sum is positive. 
\end{theorem}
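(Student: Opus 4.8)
The implication ``$\Rightarrow$'' is the routine direction, and I would dispatch it by quoting the list of equivalent characterizations of nonsingular M-matrices in \cite{plemmons1977m}; the substance is ``$\Leftarrow$''. So let $A=(a_{ij})$ be an $n\times n$ matrix with $a_{ii}>0$, $a_{ij}\le 0$ for $i\ne j$, all row sums $r_i:=\sum_j a_{ij}\ge 0$, and $r_{i_0}>0$ for some $i_0$. The plan is to show directly that $A$ is nonsingular with $A^{-1}\ge 0$; since $A$ is a Z-matrix, this is one of the equivalent definitions of a nonsingular M-matrix, and would finish the proof.

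First I would reduce to the irreducible case. Permuting $A$ into Frobenius normal form makes it block lower triangular with irreducible diagonal blocks; each diagonal block still has positive diagonal and, since only nonpositive entries get moved into the strictly-lower blocks, nonnegative row sums, and the inverse of a block lower triangular Z-matrix is entrywise nonnegative as soon as every diagonal block has a nonnegative inverse. The delicate point — and where the hypothesis is used most sharply — is that a diagonal block need not inherit a \emph{positive} row sum; the clean sufficient condition is the chain condition that every index can reach, along the directed graph of $A$, an index with positive row sum, which for irreducible $A$ is exactly ``$r_{i_0}>0$ for some $i_0$''. I would therefore prove the core statement for irreducible $A$ (the situation in all applications of this paper, since the stencils connect the whole grid) and record the chain-condition reformulation for the general case. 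I expect this reduction — rather than any single estimate — to be the main obstacle to a fully general statement.

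For irreducible $A$, both conclusions follow from the same discrete maximum principle style argument. For nonsingularity: if $Ax=0$ with $x\ne 0$, pick $k$ with $|x_k|=\max_i|x_i|>0$; the $k$-th equation and $a_{kj}\le 0$ $(j\ne k)$ give $a_{kk}|x_k|\le\sum_{j\ne k}(-a_{kj})|x_j|\le\big(\sum_{j\ne k}(-a_{kj})\big)|x_k|=(a_{kk}-r_k)|x_k|\le a_{kk}|x_k|$, so equality holds throughout, forcing $r_k=0$ and $|x_j|=|x_k|$ whenever $a_{kj}\ne 0$; irreducibility propagates this to $r_i=0$ for all $i$, contradicting $r_{i_0}>0$. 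For inverse positivity: given $b\ge 0$, set $x=A^{-1}b$ and suppose $x_k=\min_i x_i<0$; the $k$-th equation together with the sign pattern and $b_k\ge 0$ gives, by the same chain of inequalities, $r_k=0$, $b_k=0$ and $x_j=x_k$ whenever $a_{kj}\ne 0$; irreducibility then forces $x\equiv x_k<0$ and $b\equiv\mathbf 0$, whence $x=A^{-1}\mathbf 0=\mathbf 0$, a contradiction, so $x=A^{-1}b\ge 0$. A convenient alternative to this last paragraph is to write $A=sI-B$ with $s=\max_i a_{ii}$, observe $B\ge 0$ has all row sums $\le s$ with at least one $<s$, and invoke Perron--Frobenius to get $\rho(B)<s$, so that $A^{-1}=\sum_{m\ge0}s^{-m-1}B^m\ge 0$.
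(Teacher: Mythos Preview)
Your proposal is correct for the irreducible case and, importantly, you spot an issue the paper glosses over: the theorem as written (an ``if and only if'' without any irreducibility hypothesis) is not quite right. Your counterexample-style reasoning is on target --- a reducible $A$ can have all row sums nonnegative with one positive and still be singular (e.g.\ a zero-row-sum $2\times2$ block direct-summed with $[1]$), and conversely a nonsingular M-matrix can have a negative row sum. The paper's proof simply asserts ``the matrix $A$ is irreducibly diagonally dominant,'' tacitly assuming irreducibility; your explicit reduction via Frobenius normal form and the chain condition is the honest fix, and it is exactly what is needed for the applications in Section~\ref{sec-main}, where the stencils render $\bar L_h$ irreducible.

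Your route is genuinely different from the paper's. The paper invokes characterization $C_{10}$ of \cite{plemmons1977m} (``$A+\alpha I$ nonsingular for every $\alpha\ge 0$'') and dispatches it in two lines: $A$ itself is irreducibly diagonally dominant (hence nonsingular), and $A+\alpha I$ for $\alpha>0$ is strictly diagonally dominant (hence nonsingular). That is quicker but outsources the inverse-positivity to the cited equivalence. You instead prove nonsingularity and $A^{-1}\ge 0$ from scratch via a discrete-maximum-principle propagation argument (or alternatively Perron--Frobenius on $A=sI-B$), which is more self-contained and makes the role of irreducibility transparent. Either approach is fine here; yours is more robust if one wants to track exactly where connectivity enters, while the paper's is the efficient choice once one is willing to cite \cite{plemmons1977m}.
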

\begin{proof}
By condition $C_{10}$ in \cite{plemmons1977m}, $A$ is a nonsingular M-matrix if and only if $A+a I$ is nonsingular for any $a\geq 0$.  
Since all the row sums of $A$ are non-negative and at least one row sum is positive,
the matrix $A$ is  
irreducibly diagonally dominant thus nonsingular, and $A+a I$ is strictly diagonally dominant thus nonsingular for any $a>0.$
\end{proof}

\begin{defi}
Let $\mathcal N = \{1,2,\dots,n\}$. For $\mathcal N_1, \mathcal N_2 \subset \mathcal N$, we say a matrix $A$ of size $n\times n$ connects $\mathcal N_1$ with $\mathcal N_2$ if 
\begin{equation}
\forall i_0 \in \mathcal N_1, \exists i_r\in \mathcal N_2, \exists i_1,\dots,i_{r-1}\in \mathcal N \quad \mbox{s.t.}\quad  a_{i_{k-1}i_k}\neq 0,\quad k=1,\cdots,r.
\label{condition-connect}
\end{equation}
If perceiving $A$ as a directed graph adjacency matrix of vertices labeled by $\mathcal N$, then \eqref{condition-connect} simply means that there exists a directed path from any vertex in $\mathcal N_1$ to at least one vertex in $\mathcal N_2$.  
In particular, if $\mathcal N_1=\emptyset$, then any matrix $A$  connects $\mathcal N_1$ with $\mathcal N_2$.
\end{defi}

Given a square matrix $A$ and a column vector $\mathbf x$, we define
\[\mathcal N^0(A\mathbf x)=\{i: (A\mathbf x)_i=0\},\quad 
\mathcal N^+(A\mathbf x)=\{i: (A\mathbf x)_i>0\}.\]

By condition $L_{36}$ in \cite{plemmons1977m}, we have the following characterization of nonsingular M-matrices:
\begin{theorem}
\label{thm-l36}
For a real square matrix $A$  with non-positive off-diagonal entries, if there is a vector $\mathbf x>0$  with $A\mathbf x\geq 0$ s.t. $A$ connects $\mathcal N^0(A\mathbf x)$ with
$\mathcal N^+(A\mathbf x)$, then $A$ is a nonsingular M-matrix thus $A^{-1}\geq 0$. 
\end{theorem}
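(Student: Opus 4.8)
The plan is to prove a discrete minimum principle for $A$ and then read off nonsingularity and inverse-positivity. First I would remove the weight vector $\mathbf x$ by a diagonal scaling. Let $D=\diag(x_1,\dots,x_n)$, which is nonsingular with positive diagonal, and set $B=AD$. Then $B$ is again a $Z$-matrix, its directed graph coincides with that of $A$ because $b_{ij}=a_{ij}x_j\neq 0\iff a_{ij}\neq 0$, and $B\mathbf 1=A\mathbf x$, so $\mathcal N^0(B\mathbf 1)=\mathcal N^0(A\mathbf x)$ and $\mathcal N^+(B\mathbf 1)=\mathcal N^+(A\mathbf x)$; hence $B$ inherits the connectivity hypothesis. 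If the theorem holds for $B$, then $A=BD^{-1}$ is nonsingular with $A^{-1}=DB^{-1}\geq 0$, so it suffices to treat the case $\mathbf x=\mathbf 1$; thus I may assume every row sum $r_i:=\sum_j a_{ij}$ is non-negative, $\mathcal N^0=\{i:r_i=0\}$, $\mathcal N^+=\{i:r_i>0\}$, and $A$ connects $\mathcal N^0$ with $\mathcal N^+$.

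The key step is to show that $A\mathbf y\geq 0$ implies $\mathbf y\geq 0$. Suppose $\mathbf y$ has a negative entry and let $k$ attain $y_k=\min_i y_i<0$. From the identity $(A\mathbf y)_k=r_ky_k+\sum_{j\neq k}a_{kj}(y_j-y_k)$ together with $r_k\geq 0$, $y_k<0$, $a_{kj}\leq 0$ and $y_j-y_k\geq 0$, every term on the right is $\leq 0$ while the left side is $\geq 0$; hence $(A\mathbf y)_k=0$ and each term vanishes, forcing $r_k=0$ (so $k\in\mathcal N^0$) and $y_j=y_k$ whenever $a_{kj}\neq 0$. Repeating the argument at each such $j$ and inducting on path length, I would show that every vertex $l$ reachable from $k$ in the directed graph of $A$ is again a global minimizer ($y_l=y_k$) and therefore, by the same one-row identity applied at $l$, satisfies $r_l=0$. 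Thus the set of vertices reachable from $k$ lies entirely in $\mathcal N^0$ and is disjoint from $\mathcal N^+$, contradicting the assumption that $A$ connects $\mathcal N^0$ with $\mathcal N^+$, since $k\in\mathcal N^0$. Hence $\mathbf y\geq 0$.

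The conclusions then follow quickly. Applying the key step to $A\mathbf y=\mathbf 0$ gives $\mathbf y\geq 0$, and applying it to $-\mathbf y$ gives $\mathbf y\leq 0$, so $\mathbf y=\mathbf 0$ and $A$ is nonsingular. For any $\mathbf b\geq 0$ the vector $\mathbf y=A^{-1}\mathbf b$ solves $A\mathbf y=\mathbf b\geq 0$, hence $\mathbf y\geq 0$; since $\mathbf b\geq 0$ was arbitrary, $A^{-1}\geq 0$. Finally, a nonsingular $Z$-matrix with non-negative inverse is a nonsingular M-matrix, one of the equivalent characterizations in \cite{plemmons1977m}. The one delicate point, which I would write out with care, is the propagation induction in the key step: showing that every index reachable from the minimizing index $k$ is itself a global minimizer with zero row sum, so that reachability is trapped inside $\mathcal N^0$ and the connectivity hypothesis is contradicted. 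The diagonal reduction and the two corollaries are routine once that lemma is in place.
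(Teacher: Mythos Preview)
Your argument is correct. The diagonal scaling reduction to $\mathbf x=\mathbf 1$ is standard and valid since $D$ preserves the $Z$-matrix structure and the directed graph. The discrete minimum principle step is the heart of the matter, and your row identity $(A\mathbf y)_k=r_ky_k+\sum_{j\neq k}a_{kj}(y_j-y_k)$ together with the sign analysis correctly forces $r_k=0$ and $y_j=y_k$ for all $j$ with $a_{kj}\neq 0$; the induction along directed paths then traps the reachable set of $k$ inside $\mathcal N^0$, contradicting the connectivity hypothesis. The deduction of nonsingularity and $A^{-1}\geq 0$ from this minimum principle is routine and correct.

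As for comparison with the paper: the paper does not actually prove this theorem. It simply records it as condition $L_{36}$ in Plemmons' survey \cite{plemmons1977m} and uses it as a black box (notably in the proof of Theorem~\ref{thm1}, which reduces to Theorem~\ref{thm-l36} after establishing $\mathbf e>0$). Your proposal therefore supplies a self-contained elementary proof where the paper relies on a citation. The approach you take --- a discrete minimum principle propagated along the graph --- is exactly the classical argument behind this characterization; the paper's proof of Theorem~\ref{thm1} uses the same propagation idea to upgrade $\mathbf e\geq 0$ to $\mathbf e>0$, so your method is in the same spirit as the surrounding material even though the paper itself offers no proof of the present statement.
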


\subsection{Lorenz's sufficient condition for monotonicity}
\label{sec-lorenz}
All results in this subsection were first shown in  \cite{lorenz1977inversmonotonie}. For completeness, we include detailed proof.

Given a matrix $A=[a_{ij}]\in \mathbbm{R}^{n\times n}$, define its diagonal, positive and negative off-diagonal parts as $n\times n$ matrices $A_d$, $A_a$, $A_a^+$, $A_a^-$:
\[(A_d)_{ij}=\begin{cases}
a_{ii}, & \mbox{if} \quad i=j\\
0, & \mbox{if} \quad  i\neq j
\end{cases}, \quad A_a=A-A_d,
\]
\[(A_a^+)_{ij}=\begin{cases}
a_{ij}, & \mbox{if} \quad a_{ij}>0,\quad i\neq j\\
0, & \mbox{otherwise}.
\end{cases}, \quad A_a^-=A_a-A^+_a.
\]

\begin{lemma}\label{monotonelemma}
If $A$ is monotone, then for any two matrices $B\geq C$, $A^{-1}B\geq A^{-1}C$.
\end{lemma}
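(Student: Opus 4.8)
The plan is to reduce everything to the single observation that the product of two entrywise nonnegative matrices is entrywise nonnegative. First I would recall that, by definition, $A$ being monotone means precisely $A^{-1}\geq 0$ (entrywise). Next, since matrix multiplication is distributive, I would write
\[
A^{-1}B - A^{-1}C = A^{-1}(B-C).
\]
From the hypothesis $B\geq C$ we get $B-C\geq 0$ entrywise. Combining $A^{-1}\geq 0$ with $B-C\geq 0$ and writing out the $(i,j)$ entry of the product, $\big(A^{-1}(B-C)\big)_{ij}=\sum_k (A^{-1})_{ik}(B-C)_{kj}$, each summand is a product of two nonnegative reals, hence the sum is nonnegative. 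Therefore $A^{-1}(B-C)\geq 0$, which is exactly $A^{-1}B\geq A^{-1}C$.

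There is essentially no obstacle here: the statement is a one-line consequence of distributivity and the fact that multiplying nonnegative matrices preserves nonnegativity; the only thing to be careful about is to keep all inequalities entrywise (as the paper stipulates for all matrix inequalities) and to note that $B$ and $C$ need not be square — the argument only uses that $A^{-1}B$ and $A^{-1}C$ are well-defined, i.e., that $B$ and $C$ have as many rows as $A$ has columns.
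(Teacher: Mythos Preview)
Your proof is correct and essentially identical to the paper's: both use distributivity to write $A^{-1}B-A^{-1}C=A^{-1}(B-C)$ and then invoke $A^{-1}\geq 0$ together with $B-C\geq 0$. The only cosmetic difference is that the paper phrases it column-by-column (for vectors $\mathbf b\geq\mathbf c$) and then passes to matrices, whereas you work directly at the matrix level.
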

\begin{proof}
For any two column vectors $\mathbf b \geq \mathbf c$, we have
$$\mathbf{b} - \mathbf c\geq 0\Rightarrow A^{-1} (\mathbf{b}- \mathbf c)\geq 0\Rightarrow A^{-1}\mathbf b\geq A\mathbf c.$$
By considering $\mathbf b$ and $\mathbf c$ as   column vectors of $B$ and $C$, we get $A^{-1}B\geq A^{-1}C$.
\end{proof}

\begin{lemma}\label{diaglemma}
If $A$ is an M-matrix, then $A_d \geq A$ and $A^{-1}\geq A_d^{-1}$.  
\end{lemma}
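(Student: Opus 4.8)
The plan is to prove the two assertions in order: the inequality $A_d\geq A$ is immediate from $A$ being a Z-matrix, and $A^{-1}\geq A_d^{-1}$ then follows from the monotonicity of $A$ via Lemma \ref{monotonelemma}.

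First I would handle $A_d\geq A$. Since $A$ is a (nonsingular) M-matrix it is in particular a Z-matrix, so all its off-diagonal entries are nonpositive; that is, $A_a=A-A_d$ has all entries $\leq 0$. Hence $A_d-A=-A_a\geq 0$ entrywise, which is exactly $A_d\geq A$. Equivalently, $A_d$ and $A$ share the same (positive) diagonal, while $A_d$ has zero off-diagonal entries and those of $A$ are $\leq 0$.

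Next I would prove $A^{-1}\geq A_d^{-1}$. A nonsingular M-matrix has strictly positive diagonal entries, so $A_d$ is an invertible diagonal matrix with $A_d^{-1}\geq 0$; moreover $A^{-1}\geq 0$ since nonsingular M-matrices are inverse-positive, i.e.\ $A$ is monotone. Applying Lemma \ref{monotonelemma} with the monotone matrix $A$ and the pair $A_d\geq A$ gives $A^{-1}A_d\geq A^{-1}A=I$. Right-multiplying this inequality by the nonnegative matrix $A_d^{-1}$ preserves it (if $X\geq Y$ and $Z\geq 0$ then $XZ\geq YZ$), which yields $A^{-1}\geq A_d^{-1}$. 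An equivalent self-contained route is the identity $A^{-1}-A_d^{-1}=A^{-1}(A_d-A)A_d^{-1}$, whose right-hand side is a product of the three entrywise-nonnegative matrices $A^{-1}$, $A_d-A$, and $A_d^{-1}$, hence $\geq 0$.

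I do not expect a genuine obstacle here: the statement is a short consequence of the elementary structural facts about M-matrices (strictly positive diagonal, so $A_d^{-1}$ exists and is nonnegative; inverse-positivity, so $A^{-1}\geq 0$) together with Lemma \ref{monotonelemma}. The only points that truly need to be invoked are those two facts about nonsingular M-matrices; the rest is routine manipulation of entrywise inequalities, and the sign in the final inequality should be checked against a small example such as the tridiagonal $(-1,2,-1)$ matrix.
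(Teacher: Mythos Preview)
Your proof is correct and follows essentially the same route as the paper: use the Z-matrix structure for $A_d\geq A$, apply Lemma~\ref{monotonelemma} to get $A^{-1}A_d\geq I$, and right-multiply by the nonnegative diagonal matrix $A_d^{-1}$. The additional identity $A^{-1}-A_d^{-1}=A^{-1}(A_d-A)A_d^{-1}$ you mention is a nice alternative but not needed.
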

\begin{proof}
$A_d \geq A$ is trivial. $A$ is monotone, thus 
$$A_d \geq A \Rightarrow A^{-1}A_d\geq A^{-1}A=I.$$
And $A_d^{-1}\geq 0$ implies 
$$A^{-1}A_d\geq I \Rightarrow A^{-1}A_dA_d^{-1}\geq IA_d^{-1}\Rightarrow A^{-1}\geq A_d^{-1}.$$
\end{proof}

% c.f. 10.7 in Chapter 6 in  \cite{bohl2013monotonie}:
\begin{theorem}\label{thm1}
If $A_a \leq 0$ and there exists a nonzero vector $\mathbf e\in \mathbbm{R}^n$ such that $\mathbf e\geq 0$ and $A\mathbf e \geq 0$. Moreover, $A$ connects $\mathcal N^0(A\mathbf e)$ with $\mathcal N^+(A\mathbf e)$. Then the following hold:
\begin{itemize}
\item $\mathbf e > 0$.
\item $a_{ii} > 0$, $\forall i \in N$.
\item $A$ is a M-matrix and $A^{-1}\geq 0$.
\end{itemize}
\end{theorem}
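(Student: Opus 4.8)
The plan is to establish the three bullet points in sequence, since each one feeds into the next. First I would prove $\mathbf e>0$ by contradiction. Suppose the zero set $Z=\{i:e_i=0\}$ is nonempty and nontrivial (it cannot be all of $N$ since $\mathbf e$ is nonzero). For $i\in Z$, look at the $i$-th component of $A\mathbf e\geq 0$: this reads $a_{ii}e_i+\sum_{j\neq i}a_{ij}e_j\geq 0$, and since $e_i=0$ this becomes $\sum_{j\notin Z}a_{ij}e_j\geq 0$. But $A_a\leq 0$ means every $a_{ij}$ with $j\neq i$ is nonpositive, and $e_j>0$ for $j\notin Z$, so each term is $\leq 0$, forcing $a_{ij}e_j=0$ and hence $a_{ij}=0$ for all $j\notin Z$. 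Thus from a vertex $i\in Z$ there is no edge leaving $Z$; inductively no directed path starting in $Z$ ever leaves $Z$. Now I would want to place some index of $Z$ into $\mathcal N^0(A\mathbf e)$ and derive a contradiction with the connectivity hypothesis: indeed if $i\in Z$ then, as just shown, $(A\mathbf e)_i=a_{ii}e_i+\sum_{j\neq i}a_{ij}e_j=0+0=0$, so $Z\subseteq\mathcal N^0(A\mathbf e)$. The hypothesis says $A$ connects $\mathcal N^0(A\mathbf e)$ — in particular every $i\in Z$ — to $\mathcal N^+(A\mathbf e)$ via a directed path in $A$. But such a path stays in $Z$, and $Z\cap\mathcal N^+(A\mathbf e)=\emptyset$ since $Z\subseteq\mathcal N^0(A\mathbf e)$ and the two sets are disjoint. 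Contradiction, so $Z=\emptyset$ and $\mathbf e>0$.

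Next, $a_{ii}>0$ for all $i$. Now that $\mathbf e>0$, the $i$-th row of $A\mathbf e\geq 0$ gives $a_{ii}e_i\geq -\sum_{j\neq i}a_{ij}e_j=\sum_{j\neq i}|a_{ij}|e_j\geq 0$ (using $a_{ij}\leq 0$), so $a_{ii}\geq 0$ since $e_i>0$. To upgrade to strict positivity I would argue that $a_{ii}=0$ is impossible: if $a_{ii}=0$ then the above forces $a_{ij}e_j=0$ for all $j\neq i$, i.e. all off-diagonal entries in row $i$ vanish, so the entire $i$-th row of $A$ is zero, making $A$ singular — but more to the point, $(A\mathbf e)_i=0$ so $i\in\mathcal N^0(A\mathbf e)$, and row $i$ being identically zero means $a_{i k}=0$ for every $k$, so there is no edge out of vertex $i$ at all, so $A$ cannot connect $i$ to $\mathcal N^+(A\mathbf e)$ unless $\mathcal N^+(A\mathbf e)$ already reaches... actually there is simply no directed path of positive length from $i$, contradicting the connectivity requirement (the definition requires $r\geq 1$ edges, $a_{i_0 i_1}\neq 0$). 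Hence $a_{ii}>0$.

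For the third bullet, I would invoke Theorem~\ref{thm-l36} directly: $A$ has nonpositive off-diagonal entries ($A_a\leq 0$), and we are handed a vector $\mathbf x=\mathbf e$ with $\mathbf e>0$ (just proved) and $A\mathbf e\geq 0$, such that $A$ connects $\mathcal N^0(A\mathbf e)$ with $\mathcal N^+(A\mathbf e)$ (hypothesis). These are exactly the conditions of Theorem~\ref{thm-l36}, so $A$ is a nonsingular M-matrix and $A^{-1}\geq 0$. In fact the first two bullets are not even needed for this last step if one is willing to cite Theorem~\ref{thm-l36} as stated — but presumably the paper wants the self-contained chain, and $\mathbf e>0$ plus $a_{ii}>0$ are recorded as useful byproducts (and $\mathbf e>0$ may genuinely be needed if one re-proves Theorem~\ref{thm-l36} rather than citing it).

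The main obstacle is the first bullet: making the contradiction argument for $\mathbf e>0$ airtight requires carefully tracking that the "no edge leaves the zero set" property, combined with $Z\subseteq\mathcal N^0(A\mathbf e)$, genuinely contradicts the connectivity hypothesis — in particular one must handle the degenerate reading of the definition where a path of length $r$ is required with $a_{i_{k-1}i_k}\neq0$, and confirm that a path trapped inside $Z$ can never land in $\mathcal N^+(A\mathbf e)$. Everything after that is essentially bookkeeping plus a citation to the $L_{36}$ characterization already quoted as Theorem~\ref{thm-l36}.
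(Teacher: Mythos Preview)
Your proof is correct and follows essentially the same route as the paper's: the contradiction argument for $\mathbf e>0$ via the observation that the zero set $Z=\{i:e_i=0\}$ is closed under outgoing edges (hence paths from $Z$ never reach $\mathcal N^+(A\mathbf e)$), followed by citing Theorem~\ref{thm-l36} for the M-matrix conclusion. One small correction to your closing aside: the first bullet $\mathbf e>0$ \emph{is} genuinely needed to invoke Theorem~\ref{thm-l36} as stated, since that theorem's hypothesis requires a strictly positive vector $\mathbf x>0$, not merely $\mathbf x\geq 0$ nonzero; your own argument supplies exactly this upgrade.
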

\begin{proof}
Assume there is  one index $i$ such that $e_i=0$, then 
\[0\leq (A\mathbf e)_i=\sum_{j\neq i} a_{ij}e_j \leq0\Rightarrow (A\mathbf e)_i=0
\Rightarrow \sum_{j\neq i} a_{ij}e_j=0\Rightarrow a_{ij}e_j=0, \forall j.\]
Thus if $a_{ij}<0$, then $e_j=0$, which  implies $(A\mathbf e)_j=0$ by the same argument as above. Therefore, $A$ has no off-diagonal nonzero entry $a_{kl}$ such that $k\in \mathcal N^0(A\mathbf e)$  and $l\in \mathcal N^+(A\mathbf e)$. In other words, if $A$ represents the graph adjacency matrix for a directed graph of vertices indexed by $1,2,\cdots, n$, then 
 any edge starting from a vertex $i\in  \mathcal N^0(A\mathbf e)$ points to vertices in $\mathcal N^0(A\mathbf e)$, thus there is no directed path  from $i\in  \mathcal N^0(A\mathbf e)$ to any vertex in 
 $\mathcal N^+(A\mathbf e)$,
which  contradicts to the assumption that $A$ connects $\mathcal N^0(A\mathbf e)$ with $\mathcal N^+(A\mathbf e)$. 
 With $\mathbf e>0$, the rest is proven by following Theorem \ref{thm-l36}.  
\end{proof}

\begin{corollary}\label{cor1}
If $A$ is a nonsingular M-matrix, $\mathbf f\in \mathbbm{R}^n$ is a nonzero vector with $\mathbf f\geq 0$ and $A$ connects $\mathcal N^0(\mathbf f)$ with $\mathcal N^+(\mathbf f)$, then $A^{-1}\mathbf f > 0$.
\end{corollary}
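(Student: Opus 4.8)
The plan is to deduce Corollary \ref{cor1} directly from Theorem \ref{thm1} by choosing the test vector cleverly, rather than redoing any graph argument from scratch. Set $\mathbf{e} := A^{-1}\mathbf{f}$, which is well defined since $A$ is nonsingular, and note that $A\mathbf{e} = \mathbf{f}$. I would then verify, one by one, the three hypotheses of Theorem \ref{thm1} for this particular $\mathbf{e}$, and read off its conclusion.

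First, $A_a \le 0$: this is immediate because a nonsingular M-matrix is in particular a Z-matrix, so all its off-diagonal entries are nonpositive. Second, $\mathbf{e}$ is a nonzero vector with $\mathbf{e} \ge 0$ and $A\mathbf{e} \ge 0$: nonnegativity of $\mathbf{e} = A^{-1}\mathbf{f}$ follows from $A^{-1} \ge 0$ (M-matrix) together with $\mathbf{f} \ge 0$; that $\mathbf{e} \neq 0$ follows from $\mathbf{f} \neq 0$ and the invertibility of $A$; and $A\mathbf{e} = \mathbf{f} \ge 0$ holds by hypothesis. Third, the connectivity condition: since $A\mathbf{e} = \mathbf{f}$ we have $\mathcal{N}^0(A\mathbf{e}) = \mathcal{N}^0(\mathbf{f})$ and $\mathcal{N}^+(A\mathbf{e}) = \mathcal{N}^+(\mathbf{f})$, so the assumption that $A$ connects $\mathcal{N}^0(\mathbf{f})$ with $\mathcal{N}^+(\mathbf{f})$ is precisely the connectivity requirement of Theorem \ref{thm1}. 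Hence Theorem \ref{thm1} applies, and its first conclusion gives $\mathbf{e} > 0$, i.e. $A^{-1}\mathbf{f} > 0$, which is the claim.

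Since everything reduces to a direct check of the hypotheses of an already-proven theorem, there is no substantial obstacle here; the only point requiring a moment's care is the strict-versus-weak bookkeeping, namely ensuring that $\mathbf{e}$ is genuinely nonzero so that Theorem \ref{thm1} is applicable — this is where the nonsingularity of $A$ and the assumption $\mathbf{f} \neq 0$ are jointly used. As a remark, a more self-contained alternative would be to set $\mathbf{x} = A^{-1}\mathbf{f}$, write $A_d \mathbf{x} = \mathbf{f} - A_a \mathbf{x}$ with $A_d$ having positive diagonal and $-A_a \ge 0$, and rerun the graph-propagation argument from the proof of Theorem \ref{thm1} to show that the zero set $\{i : x_i = 0\}$ is closed under the edges of $A$ and contained in $\mathcal{N}^0(\mathbf{f})$, which contradicts the connectivity hypothesis unless it is empty; but invoking Theorem \ref{thm1} is the cleaner route and the one I would present.
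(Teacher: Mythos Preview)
Your proposal is correct and follows exactly the paper's own route: the paper's entire proof is the single line ``By using $\mathbf e=A^{-1}\mathbf f\geq 0$ in Theorem \ref{thm1}, we get $A^{-1}\mathbf f > 0$,'' and you have simply spelled out the verification of the hypotheses in more detail. The bookkeeping you flag (that $\mathbf e\neq 0$ follows from $\mathbf f\neq 0$ and nonsingularity) is the only point the paper leaves implicit, and you handle it correctly.
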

\begin{proof}
By using $\mathbf e=A^{-1}\mathbf f\geq 0$ in Theorem \ref{thm1}, we get $A^{-1}\mathbf f > 0$.
\end{proof}
\begin{theorem}\label{thm2}
If $A\leq M_1M_2\cdots M_k L$ where $M_1, \cdots, M_k$ are nonsingular M-matrices and $L_a\leq 0$,  and there exists a nonzero vector $\mathbf e\geq 0$ such that one of the matrices $M_1, \cdots, M_k , L$ connects $\mathcal N^0(A\mathbf e)$ with $\mathcal N^+(A\mathbf e)$. Then $A$ is a product of $k+1$ nonsingular M-matrices thus $A^{-1}\geq 0$. 
\end{theorem}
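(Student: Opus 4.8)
The strategy is to peel off all $k$ of the M-matrix factors at once and reduce everything to Theorem~\ref{thm1}. Set $N=M_1M_2\cdots M_k$, which is nonsingular with $N^{-1}\ge 0$ since it is a product of nonsingular M-matrices, and define $\tilde L=N^{-1}A$, so that $A=N\tilde L=M_1\cdots M_k\tilde L$ holds by construction. Hence it suffices to prove that $\tilde L$ is a nonsingular M-matrix: in that case $A$ is a product of $k+1$ nonsingular M-matrices, and $A^{-1}=\tilde L^{-1}M_k^{-1}\cdots M_1^{-1}$ is a product of nonnegative matrices, so $A^{-1}\ge 0$.

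To show $\tilde L$ is a nonsingular M-matrix I would verify the three hypotheses of Theorem~\ref{thm1} using the same vector $\mathbf e$. (a) $\tilde L_a\le 0$: from $A\le NL$ and the monotonicity of $N$, Lemma~\ref{monotonelemma} gives $\tilde L=N^{-1}A\le N^{-1}(NL)=L$, and since $L_a\le 0$ every off-diagonal entry of $\tilde L$ is $\le$ the corresponding entry of $L$, hence nonpositive. (b) $\tilde L\mathbf e\ge 0$ with $\mathbf e\ge 0$ nonzero: indeed $\tilde L\mathbf e=N^{-1}(A\mathbf e)\ge 0$ because $N^{-1}\ge 0$ and $A\mathbf e\ge 0$. (c) $\tilde L$ connects $\mathcal N^0(\tilde L\mathbf e)$ with $\mathcal N^+(\tilde L\mathbf e)$ — this is the only nontrivial point.

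For (c) I would use two elementary monotonicity facts, together with the obvious remark that the connecting relation is preserved when the source set shrinks or the target set grows. First, if $Q\ge 0$ has positive diagonal entries — which holds for $Q=M_j^{-1}$, since the inverse of a nonsingular M-matrix has positive diagonal — and $\mathbf v\ge 0$, then $v_i>0$ forces $(Q\mathbf v)_i\ge Q_{ii}v_i>0$, so $\mathcal N^+(\mathbf v)\subseteq\mathcal N^+(Q\mathbf v)$ and $\mathcal N^0(Q\mathbf v)\subseteq\mathcal N^0(\mathbf v)$; moreover $\mathbf v>0$ implies $Q\mathbf v>0$. Second, for $i\ne j$ we have $\tilde L_{ij}\le L_{ij}\le 0$, so $L_{ij}\ne 0$ implies $\tilde L_{ij}\ne 0$; hence the adjacency graph of $L$ is a subgraph of that of $\tilde L$, and any directed path in $L$ is a directed path in $\tilde L$. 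Now write $\mathbf f=A\mathbf e\ge 0$, which is nonzero (otherwise $\mathcal N^+(A\mathbf e)=\emptyset$ and no matrix could connect the nonempty set $\mathcal N^0(A\mathbf e)$ to it, contradicting the hypothesis), and split into cases. If some $M_i$ connects $\mathcal N^0(\mathbf f)$ with $\mathcal N^+(\mathbf f)$: applying the first fact to $M_1^{-1},\dots,M_{i-1}^{-1}$ shows $M_i$ still connects $\mathcal N^0(\mathbf g)$ with $\mathcal N^+(\mathbf g)$, where $\mathbf g=M_{i-1}^{-1}\cdots M_1^{-1}\mathbf f\ge 0$ is nonzero (with $\mathbf g=\mathbf f$ when $i=1$); so Corollary~\ref{cor1} gives $M_i^{-1}\mathbf g>0$, and applying the remaining $M_{i+1}^{-1},\dots,M_k^{-1}$ preserves strict positivity, whence $\tilde L\mathbf e=N^{-1}\mathbf f>0$, $\mathcal N^0(\tilde L\mathbf e)=\emptyset$, and (c) holds trivially. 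If instead $L$ connects $\mathcal N^0(\mathbf f)$ with $\mathcal N^+(\mathbf f)$: the first fact applied along $N^{-1}=M_k^{-1}\cdots M_1^{-1}$ shows $L$ connects $\mathcal N^0(N^{-1}\mathbf f)$ with $\mathcal N^+(N^{-1}\mathbf f)$, i.e. $\mathcal N^0(\tilde L\mathbf e)$ with $\mathcal N^+(\tilde L\mathbf e)$, and the second fact upgrades this to $\tilde L$ connecting the same pair. In every case Theorem~\ref{thm1} applies, so $\tilde L$ is a nonsingular M-matrix and the proof is finished.

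The main obstacle is exactly step (c): one is only told that a single factor among $M_1,\dots,M_k,L$ connects $\mathcal N^0(A\mathbf e)$ with $\mathcal N^+(A\mathbf e)$, and this must be converted into a connectivity statement about the single matrix $\tilde L=N^{-1}A$. The resolution hinges on the two monotonicity observations above — multiplying a nonnegative vector by $N^{-1}$ can only enlarge its positive support, and passing from $L$ to the entrywise-smaller $\tilde L$ can only add edges to the adjacency graph — after which the case bookkeeping is routine. Everything else (the factorization $A=N\tilde L$, the sign of the off-diagonal part of $\tilde L$, and $\tilde L\mathbf e\ge 0$) follows directly from Lemma~\ref{monotonelemma}, Corollary~\ref{cor1}, and the hypotheses.
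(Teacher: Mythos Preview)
Your proof is correct and follows essentially the same route as the paper: set $N=M_1\cdots M_k$, use Lemma~\ref{monotonelemma} to get $N^{-1}A\le L$ and hence the Z-matrix property, then verify the connectivity hypothesis of Theorem~\ref{thm1} via the same two-case split, invoking Corollary~\ref{cor1} in the $M_i$ case. The only cosmetic difference is that the paper derives the support-enlarging property of $M_j^{-1}$ from Lemma~\ref{diaglemma} (i.e.\ $M_j^{-1}\ge (M_j)_d^{-1}$, hence $N^{-1}A\mathbf e\ge cA\mathbf e$ for some $c>0$) rather than directly from $(M_j^{-1})_{ii}>0$ as you do.
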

\begin{proof}
Let $M= M_1M_2\cdots M_k$, then $M$ is monotone. By Lemma \ref{monotonelemma}, we get
\begin{equation}\label{m-1aleql}
M^{-1}A\leq L,
\end{equation}
thus
\begin{equation}\label{m-1aoffdiag}
(M^{-1}A)_a \leq 0. 
\end{equation}
For each $M_i$, $i = 1,\dots,k,$ by Lemma \ref{diaglemma}, we have 
\begin{equation}\label{invdiag}
(M_i)^{-1} \geq ((M_i)_d)^{-1}\Rightarrow M^{-1}\geq  (M_k)_d^{-1}\cdots  (M_1)_d^{-1},
\end{equation}
which implies 
\begin{equation}\label{m-1aepos}
M^{-1}A\mathbf e \geq c A\mathbf e,
\end{equation}
 for some positive number $c$.

If $L$ connects $\mathcal N^0(A\mathbf e)$ with $\mathcal N^+(A\mathbf e)$, then  $M^{-1}A$ also connects $\mathcal N^0(A\mathbf e)$ with $\mathcal N^+(A\mathbf e)$ because \eqref{m-1aleql} implies that $(M^{-1}A)_{ij}\neq 0$ whenever $L_{ij}\neq0$ for any $i\neq j$. By \eqref{m-1aepos}, 
$\mathcal N^+(A\mathbf e)\subset \mathcal N^+(M^{-1}A\mathbf e)$ and  $\mathcal N^0(M^{-1}A\mathbf e)\subset \mathcal N^0(A\mathbf e)$, thus $M^{-1}A$ also connects $\mathcal N^0(M^{-1}A\mathbf e)$ with $\mathcal N^+(M^{-1}A\mathbf e)$. With \eqref{m-1aoffdiag}, by  Theorem \ref{thm1}, $M^{-1}A$ is a nonsingular M-matrix thus $A$ is a product of $k+1$ M-matrices which implies $A$ is monotone. 

If $M_i$ connects $\mathcal N^0(A\mathbf e)$ with $\mathcal N^+(A\mathbf e)$ for some $1\leq i \leq k$. Let $M' = M_{1}\dots M_{i-1}$. Similar to \eqref{invdiag} and \eqref{m-1aepos}, we get
\begin{equation}
(M')^{-1}A\mathbf e \geq c_2 A\mathbf e,\quad c_2> 0,
\end{equation}
which implies that $M_i$ connects $\mathcal N^0((M')^{-1}Ae)$ with $\mathcal N^+((M')^{-1}A\mathbf e)$. By Corollary \ref{cor1}, we know $M^{-1}_i(M')^{-1}A\mathbf e > 0$, thus $M^{-1}A\mathbf e > 0$. With \eqref{m-1aoffdiag}, by Theorem \ref{thm1}  $M^{-1}A$ is a M-matrix thus $A$ is a product of $k+1$ M-matrices which implies $A$ is monotone. 
\end{proof}
\begin{theorem}\label{thm3}
If $A^-_a$ has a decomposition: $A^-_a = A^z + A^s = (a_{ij}^z) + (a_{ij}^s)$ with $A^s\leq 0$ and $A^z \leq 0$, such that 
\begin{subequations}
 \label{lorenz-condition}
\begin{align}
A_d + A^z \textrm{ is a nonsingular M-matrix},\label{cond1}\\ 
A^+_a \leq A^zA^{-1}_dA^s \textrm{ or equivalently } \forall a_{ij} > 0 \textrm{ with } i \neq j, a_{ij} \leq \sum_{k=1}^n a_{ik}^za_{kk}^{-1}a_{kj}^s,\label{cond2}\\
\exists \mathbf e \in \mathbbm{R}^n\setminus\{\mathbf 0\}, \mathbf e\geq 0 \textrm{ with $A\mathbf e \geq 0$ s.t. $A^z$ or $A^s$  connects $\mathcal N^0(A\mathbf e)$ with $\mathcal N^+(A\mathbf e)$.} \label{cond3}
\end{align}
\end{subequations}
Then $A$ is a product of two nonsingular M-matrices thus $A^{-1}\geq 0$.
\end{theorem}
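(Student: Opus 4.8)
The plan is to deduce Theorem~\ref{thm3} from Theorem~\ref{thm2} with $k=1$ by producing an explicit factorization $A \le ML$ in which $M$ is a nonsingular M-matrix and $L_a \le 0$. As a preliminary, I would note that since $A_a^- = A^z + A^s$ has zero diagonal while $A^z \le 0$ and $A^s \le 0$, the diagonal entries of $A^z$ and $A^s$ are nonpositive and sum to zero, so both $A^z$ and $A^s$ are genuinely off-diagonal matrices. In particular $A_d + A^z$ has the same diagonal as $A$, and since \eqref{cond1} asserts it is a nonsingular M-matrix, that diagonal is strictly positive; hence $A_d$ is invertible with $A_d^{-1} \ge 0$.

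I would then set $M := A_d + A^z$ and $L := I + A_d^{-1}A^s$, and compute $ML = A_d + A^s + A^z + A^z A_d^{-1} A^s$. Comparing with the splitting $A = A_d + A_a^+ + A^z + A^s$ gives $ML - A = A^z A_d^{-1}A^s - A_a^+$, which is entrywise nonnegative precisely by \eqref{cond2}; thus $A \le ML$. Moreover $L$ has unit diagonal, and $L_a = A_d^{-1}A^s \le 0$ because $A_d^{-1}\ge 0$ and $A^s \le 0$. So $M$ is a nonsingular M-matrix and $L$ has nonpositive off-diagonal part, as required by Theorem~\ref{thm2}.

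Finally, with the vector $\mathbf e$ supplied by \eqref{cond3} --- nonzero, $\mathbf e \ge 0$, $A\mathbf e \ge 0$, and $A^z$ or $A^s$ connecting $\mathcal N^0(A\mathbf e)$ with $\mathcal N^+(A\mathbf e)$ --- I would observe that $M$ has exactly the same off-diagonal zero pattern as $A^z$, and $L$ has exactly the same off-diagonal zero pattern as $A^s$, since multiplying row $i$ of $A^s$ by the positive scalar $(A_d^{-1})_{ii}$ turns no zero into a nonzero entry or vice versa. Hence connection by $A^z$ forces connection by $M$, and connection by $A^s$ forces connection by $L$, so one of $M, L$ connects $\mathcal N^0(A\mathbf e)$ with $\mathcal N^+(A\mathbf e)$. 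Theorem~\ref{thm2} then yields that $A$ is a product of two nonsingular M-matrices, whence $A^{-1}\ge 0$. The only delicate point --- more a matter of choosing the right bookkeeping than of overcoming a genuine obstruction --- is guessing the companion factor $L = I + A_d^{-1}A^s$ and recognizing that hypothesis \eqref{cond2} is nothing but the entrywise inequality $ML \ge A$; once that identification is made the conclusion is immediate from Theorem~\ref{thm2}.
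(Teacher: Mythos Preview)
Your proposal is correct and follows essentially the same route as the paper: set $M_1=A_d+A^z$ and $L=I+A_d^{-1}A^s$, verify $A\le M_1L$ via \eqref{cond2}, and invoke Theorem~\ref{thm2} with $k=1$. You have in fact supplied more detail than the paper does---in particular the observation that $A^z,A^s$ are purely off-diagonal (so $A_d$ is invertible with positive diagonal) and the explicit reason why $M_1$ and $L$ inherit the off-diagonal sparsity patterns of $A^z$ and $A^s$, which justifies the connectivity step that the paper states without elaboration.
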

\begin{proof}
By \eqref{cond2}, we have 
\begin{equation}
A=A_d+A^z+A^s+A^+_a\leq (A_d + A^z)(I+A^{-1}_dA^s).
\end{equation}
By \eqref{cond3}, either $A_d + A^z$ or $I+A^{-1}_dA^s$ connects $\mathcal N^0(A\mathbf e)$ with $\mathcal N^+(A\mathbf e)$. 
By applying Theorem \ref{thm2} for the case $k=1$, $M_1=A_d+A^z$ and $L=I+A^{-1}_dA^s$, we get $A^{-1}\geq 0$. 
\end{proof}

\section{The main result}
\label{sec-main}
% In this section, we present a sufficient condition on the mesh constraint $h$ for 
% the scheme in Section \ref{sec-q2fdscheme} so that Theorem \ref{thm3} applies
% for solving a variable coefficient Poisson equation $-\nabla\cdot(\mathbf a \nabla u)+cu=f$.

For a general matrix, conditions \eqref{lorenz-condition} in Theorem \ref{thm3} can be difficult to verify. We will first derive a simplified version of Theorem \ref{thm3} then verify it for the schemes in  Section \ref{sec-q2fdscheme}. 
\subsection{A simplified sufficient condition for monotonicity}

We will take advantage of the directed graph described by the 5-point discrete Laplacian, i.e., the second order centered difference scheme, which has similar off-diagonal negative entry patterns as the schemes in Section \ref{sec-q2fdscheme}. 

For the one-dimensional problem $-u''=f, x\in(0,1)$ with $u(0)=u(1)$, the scheme can be written as $
u_0=\sigma_0,  
 u_{n+1}=\sigma_1, \frac{-u_{i-1}+2u_i-u_{i+1}}{h^2}=f_i, i=1,\cdots, n.$
The matrix vector form is $K \bar{\mathbf u}=\bar{\mathbf f}$ where
\begin{equation}
K=\frac{1}{h^2}\begin{psmallmatrix}
   h^2 &&&&&&\\
   -1 &2&-1&&&\\
   &-1 &2&-1&&\\
   &&\ddots &\ddots&\ddots&\\
   && &-1 &2&-1\\
   &&&& &h^2
  \end{psmallmatrix},
\label{K-matrix} 
\end{equation}
which described the directed graph illustrated in Figure \ref{fig-5-point-Laplacian-1d}. Let $\mathbf 1$ denote a vector of suitable size with each entry as $1$, then $(K \mathbf 1)_i=\begin{cases}
               0 & i=1,\cdots, n\\
               1 & i=0, n+1                                                                                                                                                                                                \end{cases}.
$ By  Figure \ref{fig-5-point-Laplacian-1d}, it is easy to see that $K$ connects $\mathcal N^0(K\mathbf 1)$ with
$\mathcal N^+(K \mathbf 1)$.

\begin{figure}
\begin{center}
\subfigure[Grid points.]{ 
 \scalebox{1}{
\begin{tikzpicture}[samples=100, domain=-3:3]

\tikzset{vertex/.style = {shape=circle,draw=black,fill=black,thick,inner sep=0pt,minimum size=1.5mm}}
\tikzset{vertex2/.style = {shape=circle,draw=blue!50,fill=blue,thick,
inner sep=0pt,minimum size=1.5mm}}

\tikzset{edge/.style = {->,> = latex'}}

\node[vertex2] (d2) at  (-2,0) {};
\node[vertex2] (e2) at  (2,0) {};
\node[vertex] (a2) at  (-1,0) {};
\node[vertex] (b2) at  (0,0) {};
\node[vertex] (c2) at  (1,0) {};

 \end{tikzpicture}
}} 
\hspace{2cm}
\subfigure[The directed graph.]{ 
 \scalebox{1}{
\begin{tikzpicture}[samples=100, domain=-3:3]

\tikzset{vertex/.style = {shape=circle,draw=black,fill=black,thick,inner sep=0pt,minimum size=1.5mm}}
\tikzset{vertex2/.style = {shape=circle,draw=blue!50,fill=blue,thick,
inner sep=0pt,minimum size=1.5mm}}

\tikzset{edge/.style = {->,> = latex'}}

\node[vertex2] (d2) at  (-2,0) {};
\node[vertex2] (e2) at  (2,0) {};
\node[vertex] (a2) at  (-1,0) {};
\node[vertex] (b2) at  (0,0) {};
\node[vertex] (c2) at  (1,0) {};
\draw[edge] (c2) to (e2);
\draw[edge] (a2) to (d2);
\draw[edge] (b2) to[bend left] (a2);
\draw[edge] (a2) to[bend left] (b2);
\draw[edge] (b2) to[bend left] (c2);
\draw[edge] (c2) to[bend left] (b2);

 \end{tikzpicture}
}} 
\end{center}
\caption{An illustration of the directed graph described by off-diagonal entries of the matrix in  \eqref{K-matrix}: the domain $[0,1]$ is discretized by a uniform $5$-point grid; the black points are interior grid points and the blue ones are the boundary grid points. There is a directed path from any interior grid point to at least one of the boundary points. }

\label{fig-5-point-Laplacian-1d}
\end{figure}

Next we consider the second order accurate 5-point discrete Laplacian scheme for solving $-\Delta u=f$ on $\Omega=(0,1)\times(0,1)$ with homogeneous Dirichlet boundary conditions:
 \[u_{i,j}=0, (x_i, y_j)\in \partial \Omega;  
 \frac{-u_{i-1,j}-u_{i+1,j}+4u_{i,j}-u_{i,j+1}-u_{i+1,j}}{h^2}=f_{ij}, (x_i, y_j)\in \Omega.\]
See Figure \ref{fig-5-point-Laplacian} for the directed graph described by its matrix representation.  Let $K$ be the matrix representation of the 5-point discrete Laplacian scheme, then 
$$(K \mathbf 1)_{i,j}=\begin{cases}
                                1, & \mbox{if $(x_i, y_j)\in \partial \Omega$},\\
                                0, &  \mbox{if $(x_i, y_j)\in  \Omega$}.
                               \end{cases}
$$
By  Figure \ref{fig-5-point-Laplacian}, it is easy to see that $K$ connects $\mathcal N^0(K \mathbf 1)$ with
$\mathcal N^+(K \mathbf 1)$. 

\begin{figure}
\begin{center}
\subfigure[Grid points.]{ 
 \scalebox{1}{
\begin{tikzpicture}[samples=100, domain=-3:3]

\tikzset{vertex/.style = {shape=circle,draw=black,fill=black,thick,inner sep=0pt,minimum size=1.5mm}}
\tikzset{vertex2/.style = {shape=circle,draw=blue!50,fill=blue,thick,
inner sep=0pt,minimum size=1.5mm}}

\tikzset{edge/.style = {->,> = latex'}}

\node[vertex2] (a) at  (-2,-2) {};
\node[vertex2] (a) at  (-2,2) {};
\node[vertex2] (e) at  (2,-2) {};

\node[vertex2] (a) at  (2,2) {};

\node[vertex2] (d3) at  (-2,-1) {};
\node[vertex2] (d2) at  (-2,0) {};
\node[vertex2] (d1) at  (-2,1) {};
\node[vertex2] (e3) at  (2,-1) {};
\node[vertex2] (e2) at  (2,0) {};
\node[vertex2] (e1) at  (2,1) {};
\node[vertex2] (f1) at  (-1,2) {};
\node[vertex2] (f2) at  (0,2) {};
\node[vertex2] (f3) at  (1,2) {};
\node[vertex2] (g1) at  (-1,-2) {};
\node[vertex2] (g2) at  (0,-2) {};
\node[vertex2] (g3) at  (1,-2) {};

\node[vertex] (a1) at  (-1,1) {};
\node[vertex] (b1) at  (0,1) {};
\node[vertex] (c1) at  (1,1) {};
\node[vertex] (a2) at  (-1,0) {};
\node[vertex] (b2) at  (0,0) {};
\node[vertex] (c2) at  (1,0) {};
\node[vertex] (a3) at  (-1,-1) {};
\node[vertex] (b3) at  (0,-1) {};
\node[vertex] (c3) at  (1,-1) {};

 \end{tikzpicture}
}} \hspace{.6in}
\subfigure[The directed graph.]{
 \scalebox{1}{
\begin{tikzpicture}[samples=100, domain=-3:3]

\tikzset{vertex/.style = {shape=circle,draw=black,fill=black,thick,inner sep=0pt,minimum size=1.5mm}}
\tikzset{vertex2/.style = {shape=circle,draw=blue!50,fill=blue,thick,
inner sep=0pt,minimum size=1.5mm}}

\tikzset{edge/.style = {->,> = latex'}}

\node[vertex2] (a) at  (-2,-2) {};
\node[vertex2] (a) at  (-2,2) {};
\node[vertex2] (e) at  (2,-2) {};

\node[vertex2] (a) at  (2,2) {};

\node[vertex2] (d3) at  (-2,-1) {};
\node[vertex2] (d2) at  (-2,0) {};
\node[vertex2] (d1) at  (-2,1) {};
\node[vertex2] (e3) at  (2,-1) {};
\node[vertex2] (e2) at  (2,0) {};
\node[vertex2] (e1) at  (2,1) {};
\node[vertex2] (f1) at  (-1,2) {};
\node[vertex2] (f2) at  (0,2) {};
\node[vertex2] (f3) at  (1,2) {};
\node[vertex2] (g1) at  (-1,-2) {};
\node[vertex2] (g2) at  (0,-2) {};
\node[vertex2] (g3) at  (1,-2) {};

\node[vertex] (a1) at  (-1,1) {};
\node[vertex] (b1) at  (0,1) {};
\node[vertex] (c1) at  (1,1) {};
\node[vertex] (a2) at  (-1,0) {};
\node[vertex] (b2) at  (0,0) {};
\node[vertex] (c2) at  (1,0) {};
\node[vertex] (a3) at  (-1,-1) {};
\node[vertex] (b3) at  (0,-1) {};
\node[vertex] (c3) at  (1,-1) {};

\draw[edge] (b1) to[bend left] (a1);
\draw[edge] (a1) to[bend left] (b1);
\draw[edge] (b1) to[bend left] (c1);
\draw[edge] (c1) to[bend left] (b1);
\draw[edge] (b2) to[bend left] (a2);
\draw[edge] (a2) to[bend left] (b2);
\draw[edge] (b2) to[bend left] (c2);
\draw[edge] (c2) to[bend left] (b2);
\draw[edge] (b3) to[bend left] (a3);
\draw[edge] (a3) to[bend left] (b3);
\draw[edge] (b3) to[bend left] (c3);
\draw[edge] (c3) to[bend left] (b3);

\draw[edge] (a2) to[bend left] (a1);
\draw[edge] (a1) to[bend left] (a2);
\draw[edge] (b2) to[bend left] (b1);
\draw[edge] (b1) to[bend left] (b2);
\draw[edge] (c2) to[bend left] (c1);
\draw[edge] (c1) to[bend left] (c2);
\draw[edge] (a2) to[bend left] (a3);
\draw[edge] (a3) to[bend left] (a2);
\draw[edge] (b2) to[bend left] (b3);
\draw[edge] (b3) to[bend left] (b2);
\draw[edge] (c2) to[bend left] (c3);
\draw[edge] (c3) to[bend left] (c2);

\draw[edge] (a1) to (f1);
\draw[edge] (b1) to (f2);
\draw[edge] (c1) to (f3);
\draw[edge] (a1) to (d1);
\draw[edge] (a2) to (d2);
\draw[edge] (a3) to (d3);
\draw[edge] (c1) to (e1);
\draw[edge] (c2) to (e2);
\draw[edge] (c3) to (e3);
\draw[edge] (a3) to (g1);
\draw[edge] (b3) to (g2);
\draw[edge] (c3) to (g3);
 \end{tikzpicture}
}
}
\end{center}
\caption{An illustration of the directed graph described by off-diagonal entries in the 5-point discrete Laplacian matrix: the domain $[0,1]\times[0,1]$ is discretized by a uniform $5\times 5$ grid; the black points are interior grid points and the blue ones are the boundary grid points. There is a directed path from any interior grid point to at least one of the boundary grid points. }

\label{fig-5-point-Laplacian}
\end{figure}

Let $A:=\bar{L}_h$ denote the matrix representation of any scheme in Section \ref{sec-q2fdscheme}. 
Then $$(A \mathbf 1)_{i,j}=\begin{cases}
                                1, & \mbox{if $(x_i, y_j)\in \partial \Omega$},\\
                                c_{ij}\geq 0, &  \mbox{if $(x_i, y_j)\in  \Omega$}.
                               \end{cases}
$$
Therefore,
$\mathcal N^+{(K\mathbf 1)}\subset \mathcal N^+{(A\mathbf 1)}$ 
implies $\mathcal N^0{(A\mathbf 1)}\subset \mathcal N^0{(K\mathbf 1)}$, thus $K$ also
connects $\mathcal N^0(A \mathbf 1)$ with $\mathcal N^+(A \mathbf 1)$.
Notice that indices of nonzero off-diagonal entries in $K$ is a subset of indices of nonzero entries in $A_a^-$, thus 
$A_a^-$ also connects $\mathcal N^0(A \mathbf 1)$ with $\mathcal N^+(A \mathbf 1)$.
So the vector $\mathbf e$ can be set as $\mathbf 1$ in \eqref{cond3}.
If assuming $c(x,y)>0$, then $A\mathbf 1>  0$ thus the condition \eqref{cond3} is trivially satisfied. 

By Theorem \ref{rowsumcondition-thm}, for any decomposition of off-diagonal negative entries  $A^-_a = A^z + A^s$, $A_d+A^z$ is an M-matrix if $(A_d+A^z)\mathbf 1\neq \mathbf 0$ and $(A_d+A^z)\mathbf 1\geq 0$.
So Theorem \ref{thm3} for the schemes \eqref{p2fd-vcoef-1dscheme} and \eqref{p2fd-vcoef-2dscheme} can be simplified as 
\begin{theorem}
 \label{newthm3}
Let $A$ denote the matrix representation of the schemes solving $-\nabla\cdot( a\nabla)u+cu=f$ in Section \ref{sec-q2fdscheme}. 
Assume $A^-_a$ has a decomposition $A^-_a = A^z + A^s$ with $A^s\leq 0$ and $A^z \leq 0$.  
Then $A^{-1}\geq 0$ if the following are satisfied:
\begin{enumerate}
\item $(A_d+A^z)\mathbf 1\neq \mathbf 0$  and $(A_d+A^z)\mathbf 1\geq 0$;
\item $A^+_a \leq A^zA^{-1}_dA^s$;
 \item For $c(x,y)\geq 0$, either $A^z$ or $A^s$ has the same sparsity pattern as $A^-_a$. 
 If $c(x,y)>0$, then this condition can be removed.  
\end{enumerate}

\end{theorem}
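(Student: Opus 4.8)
The plan is to obtain Theorem~\ref{newthm3} as a direct specialization of Theorem~\ref{thm3} to the trial vector $\mathbf e = \mathbf 1$, by rewriting conditions \eqref{cond1} and \eqref{cond3} of Theorem~\ref{thm3} in the concrete form stated here; condition \eqref{cond2} and item~2 are verbatim the same, so nothing needs to be done for that one. Thus the whole argument reduces to (i) checking that $A_d+A^z$ is a $Z$-matrix and invoking the row-sum characterization of M-matrices, and (ii) exhibiting an admissible $\mathbf e$ together with the required connectivity.

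For (i), I would first note that $A_d+A^z$ has off-diagonal part $A^z \le 0$ by hypothesis, and its diagonal coincides with that of $A$. Inspecting the stencils \eqref{p2fd-vcoef-1dscheme} and \eqref{p2fd-vcoef-2dscheme}, every diagonal entry of $A$ is strictly positive: boundary rows contribute $1$, and each interior diagonal coefficient is a positive linear combination of values of $a>0$ plus $c_{ij}\ge 0$. Hence $A_d+A^z$ has positive diagonal and non-positive off-diagonal, so Theorem~\ref{rowsumcondition-thm} says it is a nonsingular M-matrix precisely when all its row sums are non-negative with at least one positive, i.e. when $(A_d+A^z)\mathbf 1 \ge 0$ and $(A_d+A^z)\mathbf 1 \ne \mathbf 0$, which is item~1; this gives \eqref{cond1}.

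For (ii), I would take $\mathbf e = \mathbf 1$. Since $(A\mathbf 1)_{ij}$ equals $1$ at boundary nodes and $c_{ij}\ge 0$ at interior nodes, $A\mathbf 1 \ge 0$, so $\mathbf 1$ is admissible. The discussion preceding the theorem already shows, using the directed graph of the $5$-point Laplacian matrix $K$ (Figures~\ref{fig-5-point-Laplacian-1d} and~\ref{fig-5-point-Laplacian}), that $\mathcal N^+(K\mathbf 1)\subseteq \mathcal N^+(A\mathbf 1)$, hence $\mathcal N^0(A\mathbf 1)\subseteq \mathcal N^0(K\mathbf 1)$, so $K$ — and therefore $A_a^-$, whose off-diagonal support contains that of $K$ — connects $\mathcal N^0(A\mathbf 1)$ with $\mathcal N^+(A\mathbf 1)$. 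If item~3 holds, i.e. $A^z$ or $A^s$ has the same sparsity pattern as $A_a^-$, then that matrix inherits the same connectivity and \eqref{cond3} follows. If instead $c(x,y)>0$, then $A\mathbf 1 > 0$, so $\mathcal N^0(A\mathbf 1)=\emptyset$, and by the convention in the definition of ``connects'' any matrix connects $\emptyset$ with any set, so \eqref{cond3} holds with no constraint on the splitting — which is why item~3 is then unnecessary. With \eqref{cond1}--\eqref{cond3} verified, Theorem~\ref{thm3} yields that $A$ is a product of two nonsingular M-matrices and $A^{-1}\ge 0$.

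I do not expect a genuine obstacle in this theorem itself; the only care needed is the routine bookkeeping that the off-diagonal nonzero pattern of $K$ is contained in that of $A_a^-$ for each node type (cell center, edge center, knot), so that connectivity transfers from $K$ to $A_a^-$ and then, under item~3, to $A^z$ or $A^s$. The substantive difficulty — constructing an explicit decomposition $A_a^- = A^z + A^s$ for which the inequality $A_a^+ \le A^z A_d^{-1} A^s$ of item~2 holds under a suitable mesh constraint — is a separate matter, handled in the subsequent subsections.
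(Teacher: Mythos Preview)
Your proposal is correct and follows essentially the same route as the paper: the discussion immediately preceding Theorem~\ref{newthm3} is exactly the argument you outline, taking $\mathbf e=\mathbf 1$, using Theorem~\ref{rowsumcondition-thm} for \eqref{cond1}, and the $5$-point Laplacian graph $K$ together with item~3 (or $c>0$) for \eqref{cond3}. Your write-up is in fact slightly more explicit than the paper's in checking that the diagonal of $A_d+A^z$ is strictly positive before invoking Theorem~\ref{rowsumcondition-thm}.
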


\subsection{One-dimensional Laplacian case}
As a demonstration of how to apply Theorem \ref{newthm3}, we first consider  the scheme \eqref{p2fd-laplacian-1dscheme}.
Let $A$ be the matrix representation of the linear operator $\mathcal L_h$ in the scheme \eqref{p2fd-laplacian-1dscheme}. Let $\mathcal A_d$ and $\mathcal A_a^{\pm}$ be linear operators corresponding to the matrices $A_d$ and $A_a^{\pm}$ respectively. 

Consider the following decomposition of $\mathcal A_a^-=\mathcal A^z+\mathcal A^s$
with $\mathcal A^z=\mathcal A^s=\frac12 \mathcal A_a^-$:
 \begin{align*}
  \mathcal A^z(\bar{\mathbf u})_0=\mathcal A^s(\bar{\mathbf u})_0&=0, \quad 
  \mathcal A^z (\bar{\mathbf u})_{n+1}=\mathcal A^s (\bar{\mathbf u})_{n+1}=0, \\
  \mathcal A^z(\bar{\mathbf u})_i=\mathcal A^s(\bar{\mathbf u})_i&=\frac{-u_{i-1}-u_{i+1}}{2h^2}, \quad \mbox{if $x_i$ is a cell center},\\
   \mathcal A^z(\bar{\mathbf u})_i=\mathcal A^s(\bar{\mathbf u})_i&=
\frac{-8u_{i-1}-8u_{i+1}}{8h^2},\quad \mbox{if $x_i$ is an interior cell end}.
  \end{align*}
The operator $\mathcal A_d$ and $\mathcal A^+_a$ are given as:  
 \begin{align*}
\mathcal A_d(\bar{\mathbf u})_0&=u_0, \quad 
 \mathcal A_d (\bar{\mathbf u})_{n+1}=u_{n+1}, \\
  \mathcal A_d(\bar{\mathbf u})_i&=\frac{2u_i}{h^2}, \quad \mbox{if $x_i$ is a cell center},\\
   \mathcal A_d(\bar{\mathbf u})_i&=
\frac{14u_i}{4h^2},\quad \mbox{if $x_i$ is an interior cell end}.
  \end{align*}
   \begin{align*}
  \mathcal A^+_a(\bar{\mathbf u})_0&=0, \quad 
  \mathcal A^+_a (\bar{\mathbf u})_{n+1}=0, \\
  \mathcal A^+_a(\bar{\mathbf u})_i&=0, \quad \mbox{if $x_i$ is a cell center},\\
   \mathcal A^+_a(\bar{\mathbf u})_i&=\frac{u_{i-2}+u_{i+2}}{4h^2},\quad \mbox{if $x_i$ is an interior cell end}.
  \end{align*}
  Obviously, $A^z$ and $A^s$ both have have the same sparsity pattern as $A^-_a$. It is straightforward to verify $[\mathcal A^d+\mathcal A^z](\mathbf 1)$ is a non-negative nonzero vector. So we only need to verify  $A^+_a \leq A^zA^{-1}_dA^s$ to apply Theorem \ref{newthm3}.
  Since $A^zA^{-1}_dA^s\geq 0$, we only need to compare nonzero coefficients in 
  $\mathcal A^+_a(\bar{\mathbf u})_i$ and $\mathcal A^z\left(\mathcal A^{-1}_d[\mathcal A^s(\bar{\mathbf u})]\right)_i$.
  
  When $x_{i}$ is an interior cell end, $x_{i\pm1}$ are cell centers, and we have
  \[\mathcal A^s(\bar{\mathbf u})_{i-1}=\frac{-u_{i-2}-u_i}{2h^2},\quad \mathcal A^{-1}_d[\mathcal A^s(\bar{\mathbf u})]_{i-1}=\frac{h^2\mathcal A^s(\bar{\mathbf u})_{i-2}}{2},\]
  \[ \mathcal A^z(\mathcal A^{-1}_d[\mathcal A^s(\bar{\mathbf u})])_i=\frac{-\mathcal A^{-1}_d[-\mathcal A^s(\bar{\mathbf u})]_{i-1}-\mathcal A^{-1}_d[\mathcal A^s(\bar{\mathbf u})]_{i+1}}{h^2}=\frac{u_{i-2}+2u_i+u_{i+2}}{4h^2}.\]
We can verify  $A^+_a \leq A^zA^{-1}_dA^s$  by comparing only the coefficients of $u_{i\pm2}$ in  $\mathcal A^+_a(\bar{\mathbf u})_i$ and $\mathcal A^z\left(\mathcal A^{-1}_d[\mathcal A^s(\bar{\mathbf u})]\right)_i$ because $A^z A^{-1}_d A^s\geq 0$. 
By Theorem \ref{newthm3}, we get $A^{-1}\geq 0.$

\subsection{One-dimensional variable coefficient case}
As we have seen in the previous discussion, all the operators are either zero or identity at the boundary points thus do not affect the discussion verifying the condition \eqref{cond2}. For the sake of simplicity, we only consider the interior grid points for the linear operators. 
With the positive and negative parts for a number $f$ defined as: 
\[ f^+=\frac{|f|+f}{2},\quad f^-=\frac{|f|-f}{2},\]  
the linear operators $\mathcal A_d$, $\mathcal A_a^{\pm}$ are
 \begin{align*}
  \mathcal A_d(\bar{\mathbf u})_i&=\left(\frac{a_{i-1}+a_{i+1}}{h^2}+c_i\right) u_i, \quad \mbox{if $x_i$ is a cell center},\\
   \mathcal A_d(\bar{\mathbf u})_i&=
\left(\frac{a_{i-2}+4a_{i-1}+18 a_i+4a_{i+1}+a_{i+2}}{8h^2}+c_i \right)u_i,\quad \mbox{if $x_i$ is an interior cell end}.
  \end{align*}
 \begin{align*}
  \mathcal A_a^+(\bar{\mathbf u})_i&=0, \quad \mbox{if $x_i$ is a cell center},\\
   \mathcal A_a^+(\bar{\mathbf u})_i&=
\frac{(3a_{i-2}-4a_{i-1}+3a_i)^+u_{i-2}+(3a_{i+2}-4a_{i+1}+3 a_i)^+u_{i+2}}{8h^2},\quad \mbox{if $x_i$ is an interior cell end}.
  \end{align*}
       \begin{align*}
  & \mbox{If $x_i$ is a cell center}\quad \mathcal A_a^-(\bar{\mathbf u})_i=\frac{-(3a_{i-1}+a_{i+1})u_{i-1}-(a_{i-1}+3a_{i+1})u_{i+1}}{4h^2}, \\
  &\mbox{If $x_i$ is an interior cell end}\quad  \mathcal A_a^-(\bar{\mathbf u})_i=\\
  &
\frac{-(3a_{i-2}-4a_{i-1}+3a_i)^-u_{i-2}-(4a_{i-2}+12a_i)u_{i-1}-(12a_{i}+4a_{i+2})u_{i+1}-(3a_{i}-4a_{i+1}+3 a_{i+2})^-u_{i+2}}{8h^2}.
  \end{align*}

  We can easily verify that $(A_d+A^z)\mathbf 1\geq 0$ for the following $\mathcal A^z$:
   \begin{align*}
  & \mbox{if $x_i$ is a cell center}\quad \mathcal A^z(\bar{\mathbf u})_i=\epsilon\frac{-(3a_{i-1}+a_{i+1})u_{i-1}-(a_{i-1}+3a_{i+1})u_{i+1}}{4h^2}, \\
  &\mbox{if $x_i$ is an interior cell end}\quad  \mathcal A^z(\bar{\mathbf u})_i=\\
  &
\frac{-(3a_{i-2}-4a_{i-1}+3a_i)^-u_{i-2}-[4a_{i-2}+12a_i-(3a_{i-2}-4a_{i-1}+3a_i)^+]u_{i-1}}{8h^2}\\
&+\frac{-[12a_{i}+4a_{i+2}-(3a_{i}-4a_{i+1}+3 a_{i+2})^+]u_{i+1}-(3a_{i}-4a_{i+1}+3 a_{i+2})^-u_{i+2}}{8h^2},
  \end{align*}
  where $\epsilon>0$ is a small number.  
Moreover, $A^z$ has the same sparsity pattern as $A^-_a$ for any $\epsilon>0$. For $\epsilon<1$ we can verify that $A^s=A^-_a-A^z\leq 0$:
     \begin{align*}
  & \mbox{If $x_i$ is a cell center}\quad \mathcal A^s(\bar{\mathbf u})_i=(1-\epsilon)\frac{-(3a_{i-1}+a_{i+1})u_{i-1}-(a_{i-1}+3a_{i+1})u_{i+1}}{4h^2}, \\
  &\mbox{If $x_i$ is an interior cell end}\quad  \mathcal A^s(\bar{\mathbf u})_i=\frac{-(3a_{i-2}-4a_{i-1}+3a_i)^+u_{i-1}-(3a_{i}-4a_{i+1}+3 a_{i+2})^+ u_{i+1}}{8h^2}.
  \end{align*}
Now
 we only need to compare nonzero coefficients in 
  $\mathcal A^+_a(\bar{\mathbf u})_i$ and $\mathcal A^z\left(\mathcal A^{-1}_d[\mathcal A^s(\bar{\mathbf u})]\right)_i$ for $x_i$ being an interior cell end.
    When $x_{i}$ is an interior cell end, $x_{i\pm1}$ are cell centers, and we have
  \[\mathcal A^s(\bar{\mathbf u})_{i-1}=(1-\epsilon)\frac{-(3a_{i-2}+a_{i})u_{i-2}-(a_{i-2}+3a_{i})u_{i}}{4h^2},\]
  \[ A^s(\bar{\mathbf u})_{i-2}=\frac{-(3a_{i-4}-4a_{i-3}+3a_{i-2})^+u_{i-3}-(3a_{i-2}-4a_{i-1}+3 a_{i})^+ u_{i-1}}{8h^2}\]
  \[\mathcal A^{-1}_d[\mathcal A^s(\bar{\mathbf u})]_{i-1}=\frac{h^2 }{(a_{i-2}+a_i+h^2 c_{i-1})}\mathcal A^s(\bar{\mathbf u})_{i-1}=(1-\epsilon)\frac{-(3a_{i-2}+a_{i})u_{i-2}-(a_{i-2}+3a_{i})u_{i}}{4(a_{i-2}+a_i+h^2 c_{i-1})}.\]
It suffices to focus on the coefficient of $u_{i-2}$ in  $\mathcal A^z(\mathcal A^{-1}_d[\mathcal A^s(\bar{\mathbf u})])_i$ and the discussion for the coefficient of $u_{i+2}$ is similar. Notice that $\mathcal A^{-1}_d[\mathcal A^s(\bar{\mathbf u})]_{i-2}$ will contribute nothing to the coefficient of $u_{i-2}$. So the coefficient of $u_{i-2}$ in  $\mathcal A^z(\mathcal A^{-1}_d[\mathcal A^s(\bar{\mathbf u})])_i$ is 
\[ (1-\epsilon)\frac{(3a_{i-2}+a_{i})(4a_{i-2}+12a_i-(3a_{i-2}-4a_{i-1}+3a_i)^+)}{32h^2(a_{i-2}+a_i+h^2 c_{i-1})}.\]
Thus to ensure $A^+_a\leq A^zA_d^-A^s$, it suffices to have the following holds for any interior cell end $x_i$:
\begin{equation*}(1-\epsilon)\frac{(3a_{i-2}+a_{i})(4a_{i-2}+12a_i-(3a_{i-2}-4a_{i-1}+3a_i)^+)}{32h^2(a_{i-2}+a_i+h^2 c_{i-1})}\geq \frac{(3a_{i-2}-4a_{i-1}+3a_i)^+}{8h^2}.
\end{equation*}
Equivalently, we need the following inequality holds for any cell center $x_i$: 
\begin{equation}(1-\epsilon)\frac{(3a_{i-1}+a_{i+1})(4a_{i-1}+12a_{i+1}-(3a_{i-1}-4a_{i}+3a_{i+1})^+)}{32h^2(a_{i-1}+a_{i+1}+h^2 c_{i})}\geq \frac{(3a_{i-1}-4a_{i}+3a_{i+1})^+}{8h^2}.\label{h-condition-1}
\end{equation}

Notice that $\epsilon$ can be any fixed number in $[0,1)$ so that $A_d+A^z$ is an M-matrix and $A^s\leq 0$. And $\epsilon$ must be strictly positive so that $A^z$ has the same sparsity pattern as $A_a^-$. 
Thus if there is one fixed $\epsilon\in(0,1)$ so that \eqref{h-condition-1} holds for any cell center $x_{i}$, then by Theorem \ref{newthm3}, $A^{-1}\geq 0.$
A sufficient condition for \eqref{h-condition-1} to hold for any cell center $x_i$ with some fixed $\epsilon\in(0,1)$ is to have the following inequality for any cell center $x_i$:
\begin{equation}\frac{(3a_{i-1}+a_{i+1})(4a_{i-1}+12a_{i+1}-(3a_{i-1}-4a_{i}+3a_{i+1})^+)}{32h^2(a_{i-1}+a_{i+1}+h^2 c_{i})}>\frac{(3a_{i-1}-4a_{i}+3a_{i+1})^+}{8h^2}.\label{h-condition-simplified}
\end{equation}

If $3a_{i-1}-4a_{i}+3a_{i+1}\leq 0$, then \eqref{h-condition-simplified} holds trivially. 
We only need to discuss the case $3a_{i-1}-4a_{i}+3a_{i+1}>0$, for which \eqref{h-condition-simplified} becomes
\begin{equation}(3a_{i-1}+a_{i+1})(a_{i-1}+4a_{i}+9a_{i+1})> 4(a_{i-1}+a_{i+1}+h^2 c_{i})(3a_{i-1}-4a_{i}+3a_{i+1}).
\label{h-condition-2}
\end{equation}

So we have proven the first result for the variable coefficient case:
\begin{theorem}
 For the scheme \eqref{p2fd-vcoef-1dscheme} solving  $- (a u')'+cu=f$ with $a(x)>0$ and $c(x)\geq 0$, its matrix representation $A=\bar{L}_h$ satisfies $A^{-1}\geq 0$ if  
 \eqref{h-condition-2} holds for any cell center $x_i$.
\end{theorem}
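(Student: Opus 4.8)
The plan is to verify the three hypotheses of Theorem~\ref{newthm3} for the matrix $A=\bar L_h$ of scheme~\eqref{p2fd-vcoef-1dscheme}, using exactly the splitting $A_a^-=A^z+A^s$ introduced above with a parameter $\epsilon\in(0,1)$ still to be fixed. Two of the three hypotheses come essentially for free from the explicit stencils. For hypothesis~(1), a direct computation of the row sums of $A_d+A^z$ gives $c_i$ at an interior cell end $x_i$, $(1-\epsilon)(a_{i-1}+a_{i+1})h^{-2}+c_i$ at a cell center $x_i$, and $1$ at a boundary row; all are $\ge 0$, and the last two are strictly positive, so $(A_d+A^z)\mathbf 1\ge 0$ and $\neq\mathbf 0$. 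For hypothesis~(3), since $\epsilon>0$ every negative off-diagonal entry of $A_a^-$ survives with a nonzero coefficient in $A^z$, so $A^z$ has the same sparsity pattern as $A_a^-$; moreover this hypothesis is only needed when $c$ may vanish, and is irrelevant if $c>0$. So the whole statement reduces to hypothesis~(2), the entrywise inequality $A_a^+\le A^zA_d^{-1}A^s$.

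To verify~(2) I would first observe that $A^zA_d^{-1}A^s\ge 0$ (because $-A^z$, $A_d^{-1}$, $-A^s$ are all $\ge 0$), so only the genuinely nonzero entries of $A_a^+$ need to be dominated. By the stencil these occur only in rows $i$ with $x_i$ an interior cell end, in columns $i-2$ and $i+2$, with coefficients $(3a_{i-2}-4a_{i-1}+3a_i)^+/(8h^2)$ and $(3a_i-4a_{i+1}+3a_{i+2})^+/(8h^2)$; by the left--right symmetry it suffices to handle the $u_{i-2}$ coefficient. The key structural point is the alternation of the mesh: $x_{i\pm1}$ are cell centers, at which $A^s$ couples only to cell ends, so the column-$(i-2)$ entry of the product $A^zA_d^{-1}A^s$ receives a contribution from the single path $i\to i-1\to i-2$, giving the closed form
\[
(1-\epsilon)\,\frac{(3a_{i-2}+a_i)\bigl(4a_{i-2}+12a_i-(3a_{i-2}-4a_{i-1}+3a_i)^+\bigr)}{32h^2\,(a_{i-2}+a_i+h^2c_{i-1})}.
\]
Requiring this to be $\ge(3a_{i-2}-4a_{i-1}+3a_i)^+/(8h^2)$, and relabelling so that the constraint is stated at the adjacent cell center, is precisely inequality~\eqref{h-condition-1}.

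It then remains to deduce~\eqref{h-condition-1} at every cell center, for a single common $\epsilon$, from the assumed inequality~\eqref{h-condition-2}. Since the left side of~\eqref{h-condition-1} is $(1-\epsilon)$ times a fixed positive quantity, hence continuous and decreasing in $\epsilon$, it is enough to have the \emph{strict} inequality~\eqref{h-condition-simplified} (the $\epsilon=0$ case) at every cell center; the mesh being fixed, there are only finitely many cell centers, so a uniform $\epsilon\in(0,1)$ can be chosen. Finally I would split on the sign of $3a_{i-1}-4a_i+3a_{i+1}$: if it is $\le 0$, then $(3a_{i-1}-4a_i+3a_{i+1})^+=0$ and~\eqref{h-condition-simplified} holds trivially because its left-hand side is strictly positive (using $a>0$); if it is $>0$, clearing the positive denominators turns~\eqref{h-condition-simplified} into exactly~\eqref{h-condition-2}. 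Hence~\eqref{h-condition-2} at every cell center yields all three hypotheses of Theorem~\ref{newthm3}, and $A^{-1}\ge 0$ follows.

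The part I expect to be the real obstacle is the middle step: confirming that $A^zA_d^{-1}A^s$ really has the claimed column-$(i\pm2)$ entries and that no other index path contributes. This hinges on the cell-center/cell-end alternation (so that $A^s$ acting at a cell center returns only cell-end contributions, and $A^z$ acting at a cell end reaches the cell centers $x_{i\pm1}$) together with the explicit diagonal entries of $\mathcal A_d$ used to form $A_d^{-1}$. Everything afterward---passing from~\eqref{h-condition-1} to~\eqref{h-condition-simplified} to~\eqref{h-condition-2}, and invoking Theorem~\ref{newthm3}---is routine algebra and a finite compactness-type argument.
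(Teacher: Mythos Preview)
Your proposal is correct and follows essentially the same route as the paper: the same splitting $A_a^-=A^z+A^s$ with parameter $\epsilon\in(0,1)$, the same verification of the row-sum and sparsity hypotheses of Theorem~\ref{newthm3}, the same single-path computation $i\to i-1\to i-2$ for the $(i,i-2)$ entry of $A^zA_d^{-1}A^s$, and the same passage \eqref{h-condition-1}$\Leftarrow$\eqref{h-condition-simplified}$\Leftarrow$\eqref{h-condition-2} via the sign split and a finiteness argument to fix a common $\epsilon$. The point you flag as the ``real obstacle'' is handled exactly as the paper does: since $x_{i-2}$ is a cell end, $\mathcal A^s(\bar{\mathbf u})_{i-2}$ involves only $u_{i-3}$ and $u_{i-1}$, so it contributes nothing to the $u_{i-2}$ coefficient, leaving only the path through $k=i-1$.
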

The constraint \eqref{h-condition-2} will be satisfied for small enough $h$. The proof of the following two theorems are included in the Appendix \ref{appendix-b}. 
\begin{theorem}
\label{1d-thm-mesh-1}
  For the scheme \eqref{p2fd-vcoef-1dscheme} solving  $- (a u')'+cu=f$ with $a(x)>0$ and $c(x)\geq 0$ on a uniform mesh, its matrix representation $A=\bar{L}_h$ satisfies $A^{-1}\geq 0$ if any of  the following constraints 
  is satisfied for each finite element cell $I_i=[x_{i-1}, x_{i+1}]$:
  \begin{itemize}
   \item There exists some $\lambda\in(\frac{3}{13},1)$ such that 
   \[h^2 c_i< \frac{13(1-\lambda)\min\limits_{I_i} a^2(x)}{6\max\limits_{I_i} a(x)-4\min\limits_{I_i} a(x)},\qquad h\frac{\max\limits_{x\in I_i} \left|a'(x)\right |}{\min\limits_{x\in I_i}a(x)}<\frac{\sqrt{39\lambda}-3}{6}.\]
\item   
 $ 2 h \max\limits_{I_i} |a'(x)|+h^2 c_i \left (1-\frac23\frac{\min\limits_{I_i} a(x)}{\max\limits_{I_i} a(x)}\right)< \frac{5}{3} \frac{\min\limits_{I_i} a^2(x)}{\max\limits_{I_i} a(x)}.$
  \item  If $c(x)\equiv0$, then we only need
    $ h\frac{\max\limits_{x\in I_i} \left|a'(x)\right |}{\min\limits_{x\in I_i}a(x)}<\frac{\sqrt{39}-3}{6}.$
 \item If $a(x)\equiv a>0$, then we only need 
    $h^2 c_i< 5 a.$
  \end{itemize}
\end{theorem}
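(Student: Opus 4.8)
The plan is to verify, at every finite element cell center $x_i$, the scalar inequality \eqref{h-condition-2} that was already shown to imply $A^{-1}\ge 0$ via Theorem \ref{newthm3}, and to check that each of the four mesh constraints forces it. Fix a cell center $x_i$, abbreviate $\underline a=\min_{I_i}a$, $\bar a=\max_{I_i}a$, $\delta=h\max_{I_i}|a'|$; by the mean value theorem $|a_{i\pm 1}-a_i|\le\delta$, so $\underline a\le a_{i-1},a_i,a_{i+1}\le\bar a$ and $\bar a-\underline a\le 2\delta$. Since \eqref{h-condition-simplified} is automatically satisfied (positive left side, zero right side) whenever $D:=3a_{i-1}-4a_i+3a_{i+1}\le 0$, we may assume $D>0$, so that \eqref{h-condition-2} is the inequality to be established.

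First I would record the algebraic identity
\[ P:=(3a_{i-1}+a_{i+1})(a_{i-1}+4a_i+9a_{i+1})-4(a_{i-1}+a_{i+1})D=-9a_{i-1}^2-3a_{i+1}^2+4a_{i-1}a_{i+1}+28a_{i-1}a_i+20a_{i+1}a_i, \]
so that \eqref{h-condition-2} reads $P>4h^2c_iD$. I then need two elementary bounds on $P$: a crude term-by-term one, $P\ge 52\underline a^2-12\bar a^2$; and a sharper one obtained by rewriting $P=40a_i^2+14a_i(a_{i-1}-a_i)+18a_i(a_{i+1}-a_i)-9(a_{i-1}-a_i)^2+4(a_{i-1}-a_i)(a_{i+1}-a_i)-3(a_{i+1}-a_i)^2$, observing that its quadratic part in the perturbations $a_{i\pm 1}-a_i$ is negative definite so that its minimum over $|a_{i\pm1}-a_i|\le\delta$ is the corner value at $a_{i\pm1}=a_i-\delta$, which after using $a_i^2\ge\underline a^2$ and $a_i\le\bar a$ gives $P\ge 40\underline a^2-32\bar a\delta-16\delta^2$. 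Together with $D\le 6\bar a-4\underline a$, the inequality \eqref{h-condition-2} then holds whenever either
\[ 12\bar a^2+4h^2c_i(6\bar a-4\underline a)<52\underline a^2 \qquad\text{or}\qquad 40\underline a^2-32\bar a\delta-16\delta^2>4h^2c_i(6\bar a-4\underline a). \]

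It remains to show each bullet implies one of these. For the first bullet, split $52\underline a^2=52\lambda\underline a^2+52(1-\lambda)\underline a^2$: the bound $h^2c_i<\tfrac{13(1-\lambda)\underline a^2}{6\bar a-4\underline a}$ gives $4h^2c_i(6\bar a-4\underline a)<52(1-\lambda)\underline a^2$, while $h\max_{I_i}|a'|/\underline a<\tfrac{\sqrt{39\lambda}-3}{6}$ rearranges to $(1+2\delta/\underline a)^2\le 13\lambda/3$, hence $\bar a^2\le\underline a^2(1+2\delta/\underline a)^2\le\tfrac{13\lambda}{3}\underline a^2$, i.e. $12\bar a^2\le 52\lambda\underline a^2$; adding the two gives the left alternative, and the range $\lambda\in(3/13,1)$ is exactly what keeps both pieces nontrivial. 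For the third bullet ($c\equiv 0$) the left alternative reduces to $\bar a<\sqrt{13/3}\,\underline a$, which follows from $\bar a\le\underline a+2\delta$ and $h\max|a'|/\underline a<(\sqrt{39}-3)/6$; for the fourth ($a\equiv$ const $=a$) it reduces to $12a^2+8h^2c_ia<52a^2$, i.e. $h^2c_i<5a$. For the second bullet, dividing the right alternative by $24\bar a$ gives $\tfrac{5}{3}\tfrac{\underline a^2}{\bar a}>\tfrac{4}{3}\delta+\tfrac{2}{3}\tfrac{\delta^2}{\bar a}+h^2c_i\bigl(1-\tfrac23\tfrac{\underline a}{\bar a}\bigr)$, and since the stated constraint forces $\delta<\bar a$ (because $2\delta<\tfrac53\tfrac{\underline a^2}{\bar a}\le\tfrac53\bar a$), the term $\tfrac23\tfrac{\delta^2}{\bar a}$ is absorbed into $2\delta$, so the stated inequality implies it. In every case \eqref{h-condition-2} holds at all cell centers and Theorem \ref{newthm3} yields $A^{-1}\ge 0$.

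The main obstacle is the sharp lower bound on $P$: one has to see $P$ as a concave quadratic in the two small quantities $a_{i\pm1}-a_i$ so that its minimum over the box is a corner value, and then balance the coefficient $a_i$ against $\underline a$ and $\bar a$ so that the residual scalar condition collapses to $6t^2+6t-5<0$ in $t=h\max|a'|/\min a$, whose positive root is $(\sqrt{39}-3)/6$ (and, with the parameter $\lambda$, to $(6t+3)^2<39\lambda$). Everything else — the sign case split on $D$, the $h$ versus $h^2$ scalings, and checking the four constraints are each strong enough — is routine bookkeeping.
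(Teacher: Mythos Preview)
Your proof is correct and follows essentially the same route as the paper: both reduce to the scalar inequality \eqref{h-condition-2} at each cell center via the crude lower bound $P\ge 52\,\underline a^{2}-12\,\bar a^{2}$ and the upper bound $D\le 6\bar a-4\underline a$, then check each bullet against the resulting sufficient condition. The only cosmetic difference is for the second bullet, where the paper estimates $\bar a^{2}-\underline a^{2}\le 4h\,\bar a\max|a'|$ by the mean value theorem applied to $a^{2}(x)$ instead of your concavity/corner-minimum argument on $P(p,q)$; both paths land on exactly the same scalar constraint $10\,\underline a^{2}>12\,\bar a\,\delta+h^{2}c_i(6\bar a-4\underline a)$.
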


\begin{theorem}
\label{1d-thm-mesh-2}
For the scheme \eqref{p2fd-vcoef-1dscheme} solving  $- (a u')'+cu=f$  with $a(x)>0$ and $c(x)\geq 0$, its matrix representation $A=\bar{L}_h$ satisfies $A^{-1}\geq 0$ if  
the following mesh constraint is achieved for all cell centers $x_i$:
\begin{subequations}
\label{mesh-constraint-1}
 \begin{equation}
 \label{mesh-constraint-1a}
 h^2\left(\frac32 c_i+\max_{x\in(x_{i-1}, x_{i+1})} a''(x)\right) < \frac{74}{45} \min\{a_{i-1},a_i,a_{i+1}\}.
 \end{equation}
If $a(x)$ is a concave function, then \eqref{mesh-constraint-1a} can be replaced by 
 \begin{equation}
 h^2 c_i <3 \min\{a_{i-1},a_i,a_{i+1}\}.
 \end{equation}
\end{subequations}

\end{theorem}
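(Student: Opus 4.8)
The plan is to deduce the algebraic inequality \eqref{h-condition-2} at every cell center $x_i$ from the mesh constraint \eqref{mesh-constraint-1}; by the theorem that reduces $A^{-1}\ge 0$ to \eqref{h-condition-2} (stated just before Theorem \ref{1d-thm-mesh-1}), this is all that is needed. As remarked there, the coefficient of $u_{i+2}$ produces the companion inequality obtained from \eqref{h-condition-2} by interchanging $a_{i-1}$ and $a_{i+1}$; since \eqref{mesh-constraint-1} is symmetric in $a_{i-1},a_{i+1}$, the same computation covers it, so I only argue for \eqref{h-condition-2}.

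First I would put \eqref{h-condition-2} in normal form. Subtracting $4(a_{i-1}+a_{i+1})(3a_{i-1}-4a_i+3a_{i+1})$ from the left-hand side, \eqref{h-condition-2} is equivalent to
\[
-9a_{i-1}^2+28a_{i-1}a_i+4a_{i-1}a_{i+1}+20a_ia_{i+1}-3a_{i+1}^2>4h^2c_i(3a_{i-1}-4a_i+3a_{i+1}),
\]
and it only has to be checked when $3a_{i-1}-4a_i+3a_{i+1}>0$ (otherwise it holds trivially, as already noted). Next I would bring in $a''$ through the mean value form of the second divided difference: $a_{i-1}-2a_i+a_{i+1}=h^2a''(\xi_i)$ for some $\xi_i\in(x_{i-1},x_{i+1})$, hence $a_{i-1}-2a_i+a_{i+1}\le h^2\max_{x\in(x_{i-1},x_{i+1})}a''(x)$, so that \eqref{mesh-constraint-1a} forces, with $m:=\min\{a_{i-1},a_i,a_{i+1}\}$,
\[
\tfrac32 h^2c_i+(a_{i-1}-2a_i+a_{i+1})<\tfrac{74}{45}m,\qquad h^2c_i<\tfrac23\Bigl(\tfrac{74}{45}m-(a_{i-1}-2a_i+a_{i+1})\Bigr).
\]
Using $3a_{i-1}-4a_i+3a_{i+1}=2a_i+3s$ and $a_{i-1}+a_{i+1}=2a_i+s$ with $s:=a_{i-1}-2a_i+a_{i+1}$, the right-hand side of the normal form is an explicit polynomial in $a_i$, $s$, and $h^2c_i$ which is increasing in $h^2c_i$; I would substitute the upper bound for $h^2c_i$ just displayed and divide through by $m^2$, obtaining by homogeneity a polynomial inequality in $a_{i-1}/m,a_i/m,a_{i+1}/m\ge 1$ (with minimum $1$).

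To finish I would write $a_{i-1}=m+\alpha$, $a_i=m+\mu$, $a_{i+1}=m+\nu$ with $\alpha,\mu,\nu\ge0$ and $\min(\alpha,\mu,\nu)=0$, and split into the three cases according to which of the three attains the minimum. In each case the left-hand side of the normal form is an exact quadratic in the two remaining parameters and is bounded below by its value at the relevant endpoint of the feasible interval, while the right-hand side, after inserting the worst-case $h^2c_i$, is a quadratic whose only negative contribution is a single $-(\,\cdot\,)^2$ term; the estimate then reduces to a one-variable quadratic inequality whose positive root turns out to be comfortably larger than the range of deviations permitted by \eqref{mesh-constraint-1a} — this is where the constant $\tfrac{74}{45}$ enters, with room to spare. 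For the concave sub-case one has $s\le0$, hence $a_{i-1}+a_{i+1}\le2a_i$ and $0<3a_{i-1}-4a_i+3a_{i+1}\le2a_i$; feeding this extra information through the same three-case computation shows $h^2c_i<3m$ suffices.

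The main obstacle is precisely the case analysis of the last step: one must organize it so that the polynomial inequalities are honestly verified — in particular distinguishing the regime in which $3a_{i-1}-4a_i+3a_{i+1}$ is close to $0$ (where the right-hand side is small and the inequality is easy) from the regime in which it is bounded away from $0$ — and keep careful track of which of the two factors on the right is controlled by $s$ and which by the constraint, so that the stated constants $\tfrac{74}{45}$ and $3$ come out correctly. No new conceptual ingredient is required: the reduction to \eqref{h-condition-2}, the monotone splitting $A_a^-=A^z+A^s$, and the checks that $A_d+A^z$ is a nonsingular M-matrix with $A^s\le0$ and $A^z$ of the full off-diagonal sparsity have all been carried out in the preceding subsection.
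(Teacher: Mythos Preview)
Your reduction to \eqref{h-condition-2} and the use of the second divided difference $a_{i-1}-2a_i+a_{i+1}=h^2a''(\xi_i)$ are exactly what the paper does. Where you diverge is in the final verification of the polynomial inequality: you propose a three-case analysis according to which of $a_{i-1},a_i,a_{i+1}$ realises the minimum, and then assert that each case collapses to a one-variable quadratic. That assertion is not substantiated---after fixing the minimum you still have two free parameters, and your ``relevant endpoint of the feasible interval'' is never specified---so as it stands this is the gap in the proposal.

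The paper sidesteps the case analysis entirely by a parametrisation trick that you may find worth knowing. Writing $d_i=(a_{i-1}+a_{i+1}-2a_i)/h^2$ and introducing a free parameter $\lambda\in(0,2]$, the inequality \eqref{h-condition-2} (for $c\equiv0$) is rewritten, after dividing by $7a_{i-1}+5a_{i+1}$ and setting $\theta=a_{i+1}/a_{i-1}$, as
\[
\Bigl(\tfrac{4}{\lambda}-2\Bigr)a_i+\Bigl(\tfrac{\tfrac{41}{5}\theta-9}{\lambda(5\theta+7)}+1\Bigr)a_{i-1}+\Bigl(1-\tfrac{3}{5\lambda}\Bigr)a_{i+1}>h^2d_i.
\]
The rational function of $\theta$ in the middle coefficient is bounded below by $-9/7$, so choosing $\lambda=9/7$ makes all three coefficients nonnegative, with sum $\tfrac{10}{9}+\tfrac{8}{15}=\tfrac{74}{45}$; replacing each $a$ by the minimum then gives the constraint \eqref{mesh-constraint-1a} directly. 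The $c\ge0$ case is handled by the same manipulation with an extra term, and the concave case falls out because $d_i\le0$. This buys you a clean, linear sufficient condition and explains the constant $\tfrac{74}{45}$ as the sum of the coefficients at the optimal $\lambda$, without any case split.
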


\begin{rmk}
 For solving heat equation with backward Euler time discretization \eqref{backwardeuler}, the mesh constraints in Theorem \ref{1d-thm-mesh-1} and Theorem \ref{1d-thm-mesh-2} imply that a lower bound for $\frac{\Delta t}{h^2}$ is a sufficient condition for ensuring monotonicity. Numerical tests suggest that a lower bound on $\frac{\Delta t}{h^2}$ is also a necessary condition, see Section \ref{sec-test}. 
A lower bound constraint on the time step is common for high order accurate spatial discretizations with backward Euler to satisfy monotonicity, e.g., \cite{qin2018implicit}. 
\end{rmk}

% \begin{rmk}
% All discussions can be easily extended to Neumann type boundary conditions. For example, consider solving $
% -(a u')' +c u= f \textrm{ on } [0,1],  u'(0)=u'(1)=0.$ The corresponding scheme is to seek $u_h\in V^h$ satisfying 
% \begin{equation}
% \mathcal A_h(u_h, v_h)=\langle f,v_h \rangle_h,\quad \forall v_h\in V^h.
% \label{scheme-neumann}
% \end{equation}
% Notice that \eqref{scheme-neumann} takes the same form as  \eqref{scheme-1} at the interior grid points, i.e., \eqref{p2fd-vcoef-1dscheme-2} and \eqref{p2fd-vcoef-1dscheme-3}. At the boundary points, the scheme operator $\mathcal L_h$  for \eqref{scheme-neumann} is
%    \begin{align*}  
% \mathcal L_h (\bar{\mathbf u})_0=\frac{(9 a_0+4a_{1}+a_{2})u_0-(12a_{0}+4a_{2})u_{1}+(3a_{2}-4a_{1}+3 a_0)u_{2}}{4h^2}+c_0u_0=f_0.
% \end{align*}
%  \begin{align*}  
% \mathcal L_h (\bar{\mathbf u})_{N+1}=\frac{(9 a_{N+1}+4a_{N}+a_{N-1})u_{N+1}-(12a_{N+1}+4a_{N-1})u_{N}+(3a_{N-1}-4a_{N}+3 a_{N+1})u_{N-1}}{4h^2}\\
% +c_{N+1}u_{N+1}=f_{N+1}.
% \end{align*}
% Since the boundary scheme operator takes the similar form as  \eqref{p2fd-vcoef-1dscheme-3}. All discussions in this subsection still apply to the boundary scheme thus \eqref{h-condition-2} for $i=1,\cdots, N$ is still a sufficient condition for \eqref{scheme-neumann} to satisfy $L_h^{-1}\geq 0.$
% \end{rmk}

\subsection{Two-dimensional variable coefficient case}
Next we apply Theorem \ref{newthm3} to the scheme \eqref{p2fd-vcoef-2dscheme}. 
The splitting $A_a^-=A^z+A^s$ is quite similar to one-dimensional case due to its stencil pattern.

 Let $A:=\bar{L}_h$ be the matrix representation of the linear operator $\mathcal L_h$ in the scheme  \eqref{p2fd-vcoef-2dscheme}. We only consider interior grid points since $\mathcal L_h$ is identity operator on boundary points which do  not affect  applying Theorem \ref{newthm3}. We first have
 \begin{align*}
&   \mathcal A_d (\bar{\mathbf u})_{ij}=\left(\frac{a_{i-1,j}+a_{i+1,j}+a_{i,j-1}+a_{i,j+1}}{h^2}+c_{ij} \right)u_{ij},\quad \mbox{if $x_{ij}$ is a cell center};\\
&   \mathcal A_d (\bar{\mathbf u})_{ij}=\left( \frac{(a_{i-2,j}+4a_{i-1,j}+18 a_{ij}+4a_{i+1,j}+a_{i+2,j})+8(a_{i,j-1}+a_{i,j+1})}{8h^2}+c_{ij}\right)u_{ij},\\   
& \mbox{if $x_{ij}$ is an edge center for an edge parallel to $y$-axis};\\
&   \mathcal A_d (\bar{\mathbf u})_{ij}=\left( \frac{(a_{i,j-2}+4a_{i,j-1}+18 a_{ij}+4a_{i,j+1}+a_{i,j+2})+8(a_{i-1,j}+a_{i+1,j})}{8h^2}+c_{ij}\right)u_{ij},\\   
& \mbox{if $x_{ij}$ is an edge center for an edge parallel to $x$-axis};\\
&   \mathcal A_d (\bar{\mathbf u})_{ij}=\left( \frac{(a_{i-2,j}+4a_{i-1,j}+18 a_{ij}+4a_{i+1,j}+a_{i+2,j})+(a_{i,j-2}+4a_{i,j-1}+18 a_{ij}+4a_{i,j+1}+a_{i,j+2})}{8h^2}+c_{ij}\right)u_{ij},\\   
& \mbox{if $x_{ij}$ is a knot}.
\end{align*}
   
For the operator $\mathcal A^+_a$, it is given as
 \begin{align*}
&   \mathcal A_a^+ (\bar{\mathbf u})_{ij}=0,\quad \mbox{if $x_{ij}$ is a cell center};\\
&   \mathcal A_a^+ (\bar{\mathbf u})_{ij}=\frac{(3a_{i-2,j}-4a_{i-1,j}+3a_{i,j})^+u_{i-2,j}+(3a_{i+2,j}-4a_{i+1,j}+3 a_{i,j})^+u_{i+2,j}}{8h^2}\\
& \mbox{if $x_{ij}$ is an edge center for an edge parallel to $y$-axis};\\
&   \mathcal A_a^+ (\bar{\mathbf u})_{ij}=\frac{(3a_{i,j-2}-4a_{i,j-1}+3a_{i,j})^+u_{i,j-2}+(3a_{i,j+2}-4a_{i,j+1}+3 a_{i,j})^+u_{i,j+2}}{8h^2}\\
& \mbox{if $x_{ij}$ is an edge center for an edge parallel to $x$-axis};\\
&   \mathcal A_a^+ (\bar{\mathbf u})_{ij}=\frac{(3a_{i-2,j}-4a_{i-1,j}+3a_{i,j})^+u_{i-2,j}+(3a_{i+2,j}-4a_{i+1,j}+3 a_{i,j})^+u_{i+2,j}}{8h^2}\\
&+\frac{(3a_{i,j-2}-4a_{i,j-1}+3a_{i,j})^+u_{i,j-2}+(3a_{i,j+2}-4a_{i,j+1}+3 a_{i,j})^+u_{i,j+2}}{8h^2}\\
& \mbox{if $x_{ij}$ is a knot}.
\end{align*}

Let $\epsilon\in(0,1)$ be a fixed number. We consider the following $A^z\leq 0$ so that $(A_d+A^z)\mathbf 1\geq 0$:
    \begin{align*} 
    & \mbox{if $x_{ij}$ is a cell center} \quad \mathcal A^z (\bar{\mathbf u})_{ij}=-\epsilon\frac{(3a_{i-1,j}+a_{i+1,j})u_{i-1,j}+(a_{i-1,j}+3a_{i+1,j})u_{i+1,j}}{4h^2}\\
-&\epsilon\frac{(3a_{i,j-1}+a_{i,j+1})u_{i,j-1}+(a_{i,j-1}+3a_{i,j+1})u_{i,j+1}}{4h^2};\\
& \mbox{if $x_{ij}$ is an edge center for an edge parallel to $y$-axis}, \mathcal A^z (\bar{\mathbf u})_{ij}= \\
\end{align*}
\begin{align*}
& \frac{-(3a_{i-2,j}-4a_{i-1,j}+3a_{i,j})^-u_{i-2,j}-[4a_{i-2,j}+12a_{i,j}-(3a_{i-2,j}-4a_{i-1,j}+3a_{i,j})^+]u_{i-1,j}}{8h^2} \\
&+\frac{-[12a_{i,j}+4a_{i+2,j}-(3a_{i+2,j}-4a_{i+1,j}+3 a_{i,j})^+]u_{i+1,j}-(3a_{i+2,j}-4a_{i+1,j}+3 a_{i,j})^-u_{i+2,j}}{8h^2}\\
& +\epsilon\frac{-(3a_{i,j-1}+a_{i,j+1})u_{i,j-1}-(a_{i,j-1}+3a_{i,j+1})u_{i,j+1}}{4h^2};\\
\end{align*}
\begin{align*} 
& \mbox{if $x_{ij}$ is an edge center for an edge parallel to $x$-axis}, \mathcal A^z (\bar{\mathbf u})_{ij}= \\
& \frac{-(3a_{i,j-2}-4a_{i,j-1}+3a_{i,j})^-u_{i,j-2}-[4a_{i,j-2}+12a_{i,j}-(3a_{i,j-2}-4a_{i,j-1}+3a_{i,j})^+]u_{i,j-1}}{8h^2} \\
&+\frac{-[12a_{i,j}+4a_{i,j+2}-(3a_{i,j+2}-4a_{i,j+1}+3 a_{i,j})^+]u_{i,j+1}-(3a_{i,j+2}-4a_{i,j+1}+3 a_{i,j})^-u_{i,j+2}}{8h^2}\\\end{align*}
\begin{align*} 
& +\epsilon\frac{-(3a_{i-1,j}+a_{i+1,j})u_{i-1,j}-(a_{i-1,j}+3a_{i+1,j})u_{i+1,j}}{4h^2};\\
& \mbox{if $x_{ij}$ is a knot},\quad\mathcal A^z (\bar{\mathbf u})_{ij}= \\
& \frac{-(3a_{i-2,j}-4a_{i-1,j}+3a_{i,j})^-u_{i-2,j}-[4a_{i-2,j}+12a_{i,j}-(3a_{i-2,j}-4a_{i-1,j}+3a_{i,j})^+]u_{i-1,j}}{8h^2} \\\end{align*}
\begin{align*} 
&+\frac{-[12a_{i,j}+4a_{i+2,j}-(3a_{i+2,j}-4a_{i+1,j}+3 a_{i,j})^+]u_{i+1,j}-(3a_{i+2,j}-4a_{i+1,j}+3 a_{i,j})^-u_{i+2,j}}{8h^2};\\\end{align*}
\begin{align*} 
&+\frac{-(3a_{i,j-2}-4a_{i,j-1}+3a_{i,j})^-u_{i,j-2}-[4a_{i,j-2}+12a_{i,j}-(3a_{i,j-2}-4a_{i,j-1}+3a_{i,j})^+]u_{i,j-1}}{8h^2} \\
&+\frac{-[12a_{i,j}+4a_{i,j+2}-(3a_{i,j+2}-4a_{i,j+1}+3 a_{i,j})^+]u_{i,j+1}-(3a_{i,j+2}-4a_{i,j+1}+3 a_{i,j})^-u_{i,j+2}}{8h^2};\\
\end{align*}

 Then  $A^s=A_a^- -A^z$ is given as:
\begin{align*} 
&\mbox{if $x_i$ is a cell center}, \quad \mathcal A^s (\bar{\mathbf u})_{ij}=-(1-\epsilon)\frac{(3a_{i-1,j}+a_{i+1,j})u_{i-1,j}+(a_{i-1,j}+3a_{i+1,j})u_{i+1,j}}{4h^2}\\
-&(1-\epsilon)\frac{(3a_{i,j-1}+a_{i,j+1})u_{i,j-1}+(a_{i,j-1}+3a_{i,j+1})u_{i,j+1}}{4h^2};\\
& \mbox{if $x_{ij}$ is an edge center for an edge parallel to $y$-axis}, \\
& \mathcal A^s (\bar{\mathbf u})_{ij}=  \frac{-(3a_{i-2,j}-4a_{i-1,j}+3a_{i,j})^+u_{i-1,j}-(3a_{i+2,j}-4a_{i+1,j}+3 a_{i,j})^+u_{i+1,j}}{8h^2}\\
&+(1-\epsilon)\frac{-(3a_{i,j-1}+a_{i,j+1})u_{i,j-1}-(a_{i,j-1}+3a_{i,j+1})u_{i,j+1}}{4h^2};\\\end{align*}
\begin{align*} 
& \mbox{if $x_{ij}$ is an edge center for an edge parallel to $x$-axis},\\
& \mathcal A^s (\bar{\mathbf u})_{ij}=\frac{-(3a_{i,j-2}-4a_{i,j-1}+3a_{i,j})^+u_{i,j-1}-(3a_{i,j+2}-4a_{i,j+1}+3 a_{i,j})^+u_{i,j+1}}{8h^2}\\
&+(1-\epsilon)\frac{-(3a_{i-1,j}+a_{i+1,j})u_{i-1,j}-(a_{i-1,j}+3a_{i+1,j})u_{i+1,j}}{4h^2};\\
\end{align*}
\begin{align*}
& \mbox{if $x_{ij}$ is a knot},\quad\mathcal A^s (\bar{\mathbf u})_{ij}= \frac{-(3a_{i-2,j}-4a_{i-1,j}+3a_{i,j})^+u_{i-1,j}-(3a_{i+2,j}-4a_{i+1,j}+3 a_{i,j})^+u_{i+1,j}}{8h^2} \\
&+\frac{-(3a_{i,j-2}-4a_{i,j-1}+3a_{i,j})^+u_{i,j-1}-(3a_{i,j+2}-4a_{i,j+1}+3 a_{i,j})^+u_{i,j+1}}{8h^2};\\
\end{align*}

For the positive off-diagonal entries, $\mathcal A^+_a (\bar{\mathbf u})_{ij}$ is nonzero only for $x_{ij}$ being an edge center or a cell center.
Thus to verify  $A_a^{+}\leq A^zA_d^{-1}A^s$, it suffices to compare $\mathcal A^z\left[\mathcal A_d^{-1}\left(\mathcal A^s(\bar{\mathbf u})\right)\right]_{ij}$ with 
$\mathcal A_a^{+}(\bar{\mathbf u})_{ij}$ for $x_{ij}$ being an edge center or a cell center.

If $x_{ij}$ is an edge center for an edge parallel to $y$-axis, then $x_{i\pm1,j}$ are cell centers. Since everything here has a symmetric structure, we only need to compare the coefficients of
$u_{i-2,j}$ in $\mathcal A^z\left[\mathcal A_d^{-1}\left(\mathcal A^s(\bar{\mathbf u})\right)\right]_{ij}$ and
$\mathcal A_a^{+}(\bar{\mathbf u})_{ij}$, and the comparison for the coefficients of
$u_{i+2,j}$ will be similar. 
\begin{align*} 
\mathcal A^s (\bar{\mathbf u})_{i-1,j}& =-(1-\epsilon)\frac{(3a_{i-2,j}+a_{ij})u_{i-2,j}+(a_{i-2,j}+3a_{i,j})u_{i,j}}{4h^2}\\
- & (1-\epsilon) \frac{(3a_{i-1,j-1}+a_{i-1,j+1})u_{i-1,j-1}+(a_{i-1,j-1}+3a_{i-1,j+1})u_{i-1,j+1}}{4h^2},\\
 \mathcal A_d^{-1}[\mathcal A^s (\bar{\mathbf u})]_{i-1,j} & =-(1-\epsilon)\frac{(3a_{i-2,j}+a_{ij})u_{i-2,j}+(a_{i-2,j}+3a_{ij})u_{i,j}}{4(a_{i-2,j}+a_{ij}+a_{i-1,j+1}+a_{i-1,j-1}+h^2c_{i-1,j})}\\
&-(1-\epsilon)\frac{(3a_{i-1,j-1}+a_{i-1,j+1})u_{i-1,j-1}+(a_{i-1,j-1}+3a_{i-1,j+1})u_{i-1,j+1}}{4(a_{i-2,j}+a_{ij}+a_{i-1,j+1}+a_{i-1,j-1}+h^2c_{i-1,j})}.
 \end{align*}
 
 Since the coefficient of
$u_{i-2,j}$ in $\mathcal A_a^+(\bar{\mathbf u})_{ij}$ is $(3a_{i-2,j}-4 a_{i-1,j}+3a_{ij})^+/(8h^2)$, we only need to discuss the case $3a_{i-2,j}-4 a_{i-1,j}+3a_{ij}>0$, for which the coefficient of
$u_{i-2,j}$ in $\mathcal A^z\left[\mathcal A_d^{-1}\left(\mathcal A^s(\bar{\mathbf u})\right)\right]_{ij}$ becomes 
\[ \frac{a_{i-2,j}+4 a_{i-1,j}+9a_{ij}}{8 h^2}\frac{(1-\epsilon)(3a_{i-2,j}+a_{ij})}{4(a_{i-2,j}+a_{ij}+a_{i-1,j+1}+a_{i-1,j-1}+h^2c_{i-1,j})}. \]
To ensure the coefficient of
$u_{i-2,j}$ in $\mathcal A^z\left[\mathcal A_d^{-1}\left(\mathcal A^s(\bar{\mathbf u})\right)\right]_{ij}$ is no less than the coefficient of
$u_{i-2,j}$ in $\mathcal A_a^{+}(\bar{\mathbf u})_{ij}$, we need  
\[\frac{(1-\epsilon)(a_{i-2,j}+4 a_{i-1,j}+9a_{ij})(3a_{i-2,j}+a_{ij})}{32 h^2(a_{i-2,j}+a_{ij}+a_{i-1,j+1}+a_{i-1,j-1}+h^2c_{i-1,j})}\geq \frac{3 a_{i-2,j}-4a_{i-1,j}+3a_{ij}}{8 h^2}.\]
Similar to the one-dimensional case, it suffices to require
\[\frac{(a_{i-2,j}+4 a_{i-1,j}+9a_{ij})(3a_{i-2,j}+a_{ij})}{4 (a_{i-2,j}+a_{ij}+a_{i-1,j+1}+a_{i-1,j-1}+h^2c_{i-1,j})}>3 a_{i-2,j}-4a_{i-1,j}+3a_{ij}.\]
Equivalently, we need the following inequality holds for any cell center $x_{ij}$: 
\begin{subequations}
 \label{h-condition-2d-cellcenter}
 \begin{align}
\label{h-condition-2d-cellcenter-1}
 \frac{(a_{i-1,j}+4 a_{i,j}+9a_{i+1,j})(3a_{i-1,j}+a_{i+1,j})}{4 (a_{i-1,j}+a_{i+1,j}+a_{i,j+1}+a_{i,j-1}+h^2c_{i,j})}>3 a_{i-1,j}-4a_{i,j}+3a_{i+1,j}.
\end{align}
Notice that \eqref{h-condition-2d-cellcenter-1} was derived for comparing $\mathcal A^z\left[\mathcal A_d^{-1}\left(\mathcal A^s(\bar{\mathbf u})\right)\right]_{ij}$ and $\mathcal A_a^{+}(\bar{\mathbf u})_{ij}$ for $x_{ij}$ being an edge center of an edge parallel to $y$-axis. 
If $x_{ij}$ is an edge center of an edge parallel to $x$-axis, then we can derive a similar constraint:
 \begin{align}
\label{h-condition-2d-cellcenter-2}
 \frac{(a_{i,j-1}+4 a_{i,j}+9a_{i,j+1})(3a_{i,j-1}+a_{i,j+1})}{4 (a_{i,j-1}+a_{i,j+1}+a_{i+1,j}+a_{i-1,j}+h^2c_{i,j})}>3 a_{i,j-1}-4a_{i,j}+3a_{i,j+1}.
\end{align}
\end{subequations}

If $x_{ij}$ is a knot, then $x_{i\pm1,j}$ are edge centers for an edge parallel to $x$-axis. Since everything here has a symmetric structure, we only need to compare the coefficients of
$u_{i-2,j}$ in $\mathcal A^z\left[\mathcal A_d^{-1}\left(\mathcal A^s(\bar{\mathbf u})\right)\right]_{ij}$ and
$\mathcal A_a^{+}(\bar{\mathbf u})_{ij}$, and the comparison for the coefficients of
$u_{i+2,j}$, $u_{i,j-2}$ and $u_{i,j+2}$ will be similar. 
\begin{align*}
 &\mathcal A^s (\bar{\mathbf u})_{i-1,j}=(1-\epsilon)\frac{-(3a_{i-2,j}+a_{i,j})u_{i-2,j}-(a_{i-2,j}+3a_{i,j})u_{i,j}}{4h^2}\\
&+\frac{-(3a_{i-1,j-2}-4a_{i-1,j-1}+3a_{i-1,j})^+u_{i-1,j-1}-(3a_{i-1,j+2}-4a_{i-1,j+1}+3 a_{i-1,j})^+u_{i-1,j+1}}{8h^2}
\\
 &\mathcal A_d^{-1}[\mathcal A^s (\bar{\mathbf u})]_{i-1,j}=(1-\epsilon)\frac{-(3a_{i-2,j}+a_{i,j})u_{i-2,j}-(a_{i-2,j}+3a_{i,j})u_{i,j}}{\frac{1}{2}(a_{i-1,j-2}+4a_{i-1,j-1}+18 a_{i-1,j}+4a_{i-1,j+1}+a_{i-1,j+2})+4(a_{i-2,j}+a_{i,j})+4h^2c_{i-1,j}}\\
&+\frac{-(3a_{i-1,j-2}-4a_{i-1,j-1}+3a_{i-1,j})^+u_{i-1,j-1}-(3a_{i-1,j+2}-4a_{i-1,j+1}+3 a_{i-1,j})^+u_{i-1,j+1}}{(a_{i-1,j-2}+4a_{i-1,j-1}+18 a_{i-1,j}+4a_{i-1,j+1}+a_{i-1,j+2})+8(a_{i-2,j}+a_{i,j})+8h^2c_{i-1,j}}.
 \end{align*}
 For the same reason as above we still only consider the case where $3a_{i-2,j}-4 a_{i-1,j}+3a_{ij}>0$. So the coefficient of
$u_{i-2,j}$ in $\mathcal A^z\left[\mathcal A_d^{-1}\left(\mathcal A^s(\bar{\mathbf u})\right)\right]_{ij}$ is 
\[ \frac{1}{4 h^2}\frac{(1-\epsilon)(a_{i-2,j}+4 a_{i-1,j}+9a_{ij})(3a_{i-2,j}+a_{i,j})}{(a_{i-1,j-2}+4a_{i-1,j-1}+18 a_{i-1,j}+4a_{i-1,j+1}+a_{i-1,j+2})+8(a_{i-2,j}+a_{i,j})+8c_{i-1,j}h^2}. \]
To ensure the coefficient of
$u_{i-2,j}$ in $\mathcal A^z\left[\mathcal A_d^{-1}\left(\mathcal A^s(\bar{\mathbf u})\right)\right]_{ij}$ is no less than the coefficient of
$u_{i-2,j}$ in $\mathcal A_a^{+}(\bar{\mathbf u})_{ij}$,  we only need  
\[\frac{2(a_{i-2,j}+4 a_{i-1,j}+9a_{ij})(3a_{i-2,j}+a_{i,j})}{(a_{i-1,j-2}+4a_{i-1,j-1}+18 a_{i-1,j}+4a_{i-1,j+1}+a_{i-1,j+2})+8(a_{i-2,j}+a_{i,j})+8c_{i-1,j}h^2}\]
\[> 3 a_{i-2,j}-4a_{i-1,j}+3a_{ij}.\]

Equivalently, we need the following inequality holds for any edge center  $x_{ij}$  for an edge parallel to $x$-axis: 
\begin{subequations}
 \label{h-condition-2d-edgecenter}
\begin{align}\label{h-condition-2d-xedgecenter}
&&\frac{2(a_{i-1,j}+4 a_{i,j}+9a_{i+1,j})(3a_{i-1,j}+a_{i+1,j})}{(a_{i,j-2}+4a_{i,j-1}+18 a_{i,j}+4a_{i,j+1}+a_{i,j+2})+8(a_{i-1,j}+a_{i+1,j})+8c_{i,j}h^2}\nonumber\\
&&> 3 a_{i-1,j}-4a_{i,j}+3a_{i+1,j}.
\end{align}
We also need the following inequality holds for any edge center  $x_{ij}$  for an edge parallel to $y$-axis: 
\begin{align}\label{h-condition-2d-yedgecenter}
&&\frac{2(a_{i,j-1}+4 a_{i,j}+9a_{i,j+1})(3a_{i,j-1}+a_{i,j-1})}{(a_{i-2,j}+4a_{i-1,j}+18 a_{i,j}+4a_{i+1,j}+a_{i+2,j})+8(a_{i,j-1}+a_{i,j+1})+8c_{i,j}h^2}\nonumber\\
&&> 3 a_{i,j-1}-4a_{i,j}+3a_{i,j+1}.
\end{align}
\end{subequations}
We have similar result to the one-dimensional case as following:
\begin{theorem}
 For the scheme \eqref{p2fd-vcoef-2dscheme} solving  $- \nabla\cdot (a \nabla u)+cu=f$ with $a(x)>0$ and $c(x)\geq 0$, its matrix representation $A=\bar{L}_h$ satisfies $A^{-1}\geq 0$ if \eqref{h-condition-2d-cellcenter} holds for any cell center $x_{ij}$, \eqref{h-condition-2d-xedgecenter} holds for $x_{ij}$ being  any edge center of an edge parallel to $x$-axis  and   \eqref{h-condition-2d-yedgecenter} holds for  $x_{ij}$ being any edge center of an edge parallel to $y$-axis.
\end{theorem}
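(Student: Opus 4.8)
The plan is to apply Theorem \ref{newthm3} with the explicit splitting $A_a^- = A^z + A^s$ constructed above, which depends on a fixed parameter $\epsilon \in (0,1)$. Three things must then be verified. First, condition~1 of Theorem \ref{newthm3}: that $(A_d+A^z)\mathbf 1$ is a nonnegative, nonzero vector. On boundary rows $\mathcal L_h$ is the identity so the corresponding entry of $(A_d+A^z)\mathbf 1$ equals $1$, which already makes the vector nonzero; at each interior grid point (cell center, $x$- or $y$-edge center, or knot) the off-diagonal part of $A^z$ is by construction exactly a nonnegative combination of the negative off-diagonal entries of $A$ together with the positive-part corrections, so summing against $\mathbf 1$ the diffusive contributions telescope to a nonnegative multiple of a difference of $a$-values plus $c_{ij}\ge 0$, and nonnegativity follows as in the one-dimensional case. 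Second, condition~3: since $\epsilon>0$, the matrix $A^z$ retains every nonzero off-diagonal position of $A_a^-$, hence has the same sparsity pattern as $A_a^-$; this holds even when $c\equiv 0$, so condition~3 is satisfied.

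The substance is condition~2, $A_a^+ \le A^z A_d^{-1} A^s$. Because $A^z\le 0$, $A_d^{-1}\ge 0$ and $A^s\le 0$, the product $A^z A_d^{-1} A^s$ is entrywise nonnegative, so it suffices to check the inequality only at the positions where $A_a^+$ is nonzero, namely the $u_{i\pm2,j}$ and $u_{i,j\pm2}$ slots at edge centers and knots. I would go case by case, following the computation initiated above. At a $y$-edge center the two long-range neighbors $x_{i\pm1,j}$ are cell centers; $\mathcal A_d^{-1}[\mathcal A^s(\bar{\mathbf u})]_{i-1,j}$ contributes to the $u_{i-2,j}$ coefficient only through the single term with numerator $-(1-\epsilon)(3a_{i-2,j}+a_{ij})$ and denominator $4h^2$ times the diagonal entry at $x_{i-1,j}$; applying the $A^z$-stencil at $x_{ij}$ to this yields the displayed coefficient, and comparing with $(3a_{i-2,j}-4a_{i-1,j}+3a_{ij})^+/(8h^2)$ (nontrivial only when this is positive) gives, after discarding the $(1-\epsilon)$ factor, inequality \eqref{h-condition-2d-cellcenter-1}; the symmetric slot and the $x$-edge centers give \eqref{h-condition-2d-cellcenter-2}. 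At a knot the neighbors $x_{i\pm1,j}$, $x_{i,j\pm1}$ are themselves edge centers, so the relevant diagonal carries the larger five-point $Q^2$-type expression, producing the factor-$2$ versions \eqref{h-condition-2d-xedgecenter}--\eqref{h-condition-2d-yedgecenter}. Since each of the three target inequalities is strict, a single $\epsilon\in(0,1)$ small enough makes all the $(1-\epsilon)$-weighted inequalities hold simultaneously at every interior point; Theorem \ref{newthm3} then yields $A^{-1}\ge 0$.

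I expect the main obstacle, beyond the bookkeeping over the four vertex types, to be the genuinely two-dimensional cross terms: unlike in one dimension, the diagonal entry at an intermediate point carries the transverse values $a_{i-1,j\pm1}$ (or, at an edge center, a full five-point $Q^2$ sum), and one must check that enlarging this denominator only relaxes the needed inequality, so that dropping these extra positive terms is legitimate and the resulting conditions are still the clean forms \eqref{h-condition-2d-cellcenter}--\eqref{h-condition-2d-edgecenter}. A secondary point is confirming that the same $A^z$ simultaneously meets the M-matrix requirement (which forces $\epsilon<1$ together with the sign bookkeeping of the positive-part corrections) and the sparsity requirement (which forces $\epsilon>0$); both hold for every $\epsilon\in(0,1)$, so choosing $\epsilon$ \emph{after} assuming the mesh conditions introduces no circularity.
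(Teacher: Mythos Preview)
Your proposal is correct and follows essentially the same route as the paper: apply Theorem~\ref{newthm3} with the explicit $\epsilon$-dependent splitting, check conditions~1 and~3 directly, and for condition~2 compare the unique contributing path through the adjacent cell/edge center to the positive off-diagonal entry, then absorb the $(1-\epsilon)$ factor using the strict inequalities. One clarification on your anticipated obstacle: nothing is dropped---the transverse $a$-values $a_{i,j\pm1}$ (resp.\ the full five-point sum) remain in the denominators of \eqref{h-condition-2d-cellcenter} and \eqref{h-condition-2d-edgecenter} themselves, and the path count shows that the $u_{i-2,j}$ coefficient in $A^zA_d^{-1}A^s$ comes \emph{only} through $x_{i-1,j}$, so the displayed expressions are exact, not upper bounds.
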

The constraints \eqref{h-condition-2d-cellcenter},  \eqref{h-condition-2d-xedgecenter} and  \eqref{h-condition-2d-yedgecenter} can be satisfied for small $h$. 
\begin{theorem}\label{2d-thm-mesh-1}
For the scheme \eqref{p2fd-vcoef-2dscheme} solving  $-\nabla (a(x)\nabla u)+c u=f$ with $a(x)>0$ and $c(x)\geq 0$, its matrix representation $A=\bar{L}_h$ satisfies $A^{-1}\geq 0$ if  
the following mesh constraint is achieved for all edge centers $x_{ij}$:
\[\min_{J_{ij}} a(x)^2 >\frac{49}{61}\max_{J_{ij}}a(x)^2 + \frac{8}{61}\left(3\max_{J_{ij}}a(x)-2\min_{J_{ij}} a(x)\right)h^2c_{ij},\]
where $J_{ij}$ is the union of two finite element cells: if $x_{ij}$ is an edge center of an edge parallel to $x$-axis, then  $J_{ij}= [x_{i-1}, x_{i+1}]\times[y_{j-2}, y_{j+2}]$; if $x_{ij}$ is an edge center of an edge parallel to $y$-axis, then  $J_{ij}= [x_{i-2}, x_{i+2}]\times[y_{j-1}, y_{j+1}]$. 
\end{theorem}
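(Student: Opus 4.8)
\emph{Proof plan.} The plan is to deduce the statement from the sufficient condition established in the theorem immediately preceding this one, which says that $A^{-1}\ge 0$ once the pointwise inequalities \eqref{h-condition-2d-cellcenter}, \eqref{h-condition-2d-xedgecenter} and \eqref{h-condition-2d-yedgecenter} hold at every cell center, every $x$-parallel edge center and every $y$-parallel edge center respectively. Because the scheme \eqref{p2fd-vcoef-2dscheme} is symmetric under interchange of the two coordinate directions, it suffices to verify \eqref{h-condition-2d-xedgecenter} at an arbitrary $x$-parallel edge center and \eqref{h-condition-2d-cellcenter-1} at an arbitrary cell center; the $y$-direction versions are obtained by relabeling. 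Thus the whole argument reduces to showing that the stated bound on $\min_{J_{ij}}a$, $\max_{J_{ij}}a$ and $h^2c_{ij}$ forces each of these two scalar inequalities.

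First I would handle the edge-center inequality. Fix an $x$-parallel edge center $x_{ij}$ and put $m=\min_{J_{ij}}a$, $M=\max_{J_{ij}}a$. The point of the choice of $J_{ij}$ is that every grid value occurring in \eqref{h-condition-2d-xedgecenter} — the three values $a_{i-1,j},a_{ij},a_{i+1,j}$ in the numerator and the seven values $a_{ij},a_{i,j\pm1},a_{i,j\pm2},a_{i\pm1,j}$ in the denominator — lies in $J_{ij}$, hence in $[m,M]$, and $c$ is sampled at the center $x_{ij}$ of $J_{ij}$. If $3a_{i-1,j}-4a_{ij}+3a_{i+1,j}\le 0$ the inequality is automatic, since its left-hand side is nonnegative, so assume this quantity is positive. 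I would then bound the numerator from below and the denominator, as well as the factor $3a_{i-1,j}-4a_{ij}+3a_{i+1,j}$ on the right, from above, all in terms of $m$, $M$ and $h^2c_{ij}$. Two things matter: keeping the middle term in the estimate $3a_{i-1,j}-4a_{ij}+3a_{i+1,j}\le 6M-4m=2(3M-2m)$ rather than discarding $-4a_{ij}$, and exploiting that $a_{i-1,j},a_{i+1,j}$ appear simultaneously in the numerator and in the $8(a_{i-1,j}+a_{i+1,j})$ summand of the denominator, so that the extremal configuration for one side is tied to the other. The outcome is a scalar inequality of the form $\mu_1 m^2>\mu_2 M^2+\mu_3(3M-2m)h^2c_{ij}$, and optimizing over the admissible triples $(a_{i-1,j},a_{ij},a_{i+1,j})$ pins down $(\mu_1,\mu_2,\mu_3)=(61,49,8)$, i.e.\ exactly the mesh constraint.

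Next, the cell-center inequality. For a cell center $x_{ij}$ all the values entering \eqref{h-condition-2d-cellcenter-1} lie in the single finite element cell centered at $x_{ij}$, and that cell is contained in $J_{i,j+1}$ (and in $J_{i,j-1}$, $J_{i\pm1,j}$), the region attached to any of its four edge centers. So the min and the max of $a$ over that cell lie between $\min_{J_{i,j+1}}a$ and $\max_{J_{i,j+1}}a$, and the same min/max reduction turns \eqref{h-condition-2d-cellcenter-1} into a scalar inequality whose constants are strictly milder than the edge-center ones: the denominator weight is $16$ in place of $44$, against only the loss of the factor $2$ in the numerator. Hence the mesh constraint at the edge center $x_{i,j+1}$, which already clears the harder edge-center threshold, \emph{a fortiori} yields \eqref{h-condition-2d-cellcenter-1}. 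The one point needing a line of care is that \eqref{h-condition-2d-cellcenter-1} samples $c$ at the cell center while the mesh constraint samples it at an edge center; this is absorbed because the cell-center inequality carries only half the $h^2c$ weight, and $c\in C^0(\bar\Omega)$ is bounded, so $h^2c$ is uniformly small once $h$ is small enough for the constraint to hold.

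Assembling the pieces: the mesh constraint, posited at every edge center, gives \eqref{h-condition-2d-xedgecenter} and \eqref{h-condition-2d-yedgecenter} at all edge centers directly and \eqref{h-condition-2d-cellcenter} at all cell centers via the containment argument, so the preceding theorem yields $A^{-1}\ge 0$. I expect the only genuinely delicate step to be the scalar optimization in the edge-center case that produces the constants $\frac{49}{61}$ and $\frac{8}{61}$, and in particular the verification that the edge-center inequalities — not the cell-center ones — are the binding constraints; the reduction to the preceding theorem and the symmetry/containment bookkeeping are routine.
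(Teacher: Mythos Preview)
Your plan is essentially the paper's own argument: reduce to the pointwise inequalities \eqref{h-condition-2d-cellcenter} and \eqref{h-condition-2d-edgecenter}, bound every $a$-value appearing there between $m=\min_{J_{ij}}a$ and $M=\max_{J_{ij}}a$ (using in particular $3a_{i-1,j}-4a_{ij}+3a_{i+1,j}\le 2(3M-2m)$), obtain the scalar threshold $61m^2>49M^2+8(3M-2m)h^2c_{ij}$ at edge centers and the milder threshold $7m^2>5M^2+\tfrac{2}{3}(3M-2m)h^2c_{ij}$ at cell centers, and then observe that the edge-center constraint implies the cell-center one because $I_{ij}\subset J_{i',j'}$ and $49/61>5/7$, $8/61>2/21$. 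The paper simply asserts the last implication without mentioning that $c$ is sampled at different points; you flag this and try to argue it away, but your justification (``$h^2c$ is uniformly small once $h$ is small enough for the constraint to hold'') is not actually correct, since the constraint can hold with $h^2c$ of order $a$---so on this minor point neither you nor the paper is fully clean.
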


\begin{theorem}
\label{2d-thm-mesh-2}
  For the scheme \eqref{p2fd-vcoef-2dscheme} solving  $- \nabla\cdot (a \nabla u)+cu=f$ with $a(x)>0$ and $c(x)\geq 0$ on a uniform mesh, its matrix representation $A=\bar{L}_h$ satisfies $A^{-1}\geq 0$ if  any of  the following mesh constraints 
  is satisfied for any edge center $x_{ij}$:
  \begin{itemize}
   \item There exists some $\lambda\in(\frac{49}{61},1)$ such that 
   \[h^2 c_{ij}< \frac{61(1-\lambda)\min\limits_{J_{ij}} a^2(x)}{8\left(3\max\limits_{J_{ij}} a(x)-2\min\limits_{J_{ij}} a(x)\right)},\qquad h\frac{\max\limits_{x\in J{ij}} \left|\nabla a(x)\right |}{\min\limits_{x\in J_{ij}}a(x)}<\frac{\sqrt{122\lambda}-7\sqrt2}{28}.\]
\item   
$ \frac{49\sqrt2}{3} h \max\limits_{J_{ij}} |\nabla a(x)|+2h^2 c_{ij} \left(1-\frac23\frac{\min\limits_{J_{ij}} a(x)}{\max\limits_{J_{ij}} a(x)}\right)< \frac{\min\limits_{J_{ij}} a^2(x)}{\max\limits_{J_{ij}} a(x)}.$
  \item  If $c(x)\equiv0$, then we only need
    $ h\frac{\max\limits_{x\in J_{ij}} \left|\nabla a(x)\right |}{\min\limits_{x\in J_{ij}}a(x)}<\frac{\sqrt{122}-7\sqrt2}{28}.$
 \item If $a(x)\equiv a>0$, then we only need 
    $h^2 c_{ij}< \frac32 a.$
  \end{itemize}
Here the definition of $J_{ij}$ is the same as in Theorem \ref{2d-thm-mesh-1}.   
\end{theorem}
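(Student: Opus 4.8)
The plan is to deduce Theorem~\ref{2d-thm-mesh-2} from Theorem~\ref{2d-thm-mesh-1}: it suffices to show that each of the four listed mesh constraints forces the single ``clean'' inequality $\min_{J_{ij}}a^2>\tfrac{49}{61}\max_{J_{ij}}a^2+\tfrac{8}{61}(3\max_{J_{ij}}a-2\min_{J_{ij}}a)h^2c_{ij}$ at every edge center $x_{ij}$, which is exactly the hypothesis of Theorem~\ref{2d-thm-mesh-1}. For context, that clean inequality is in turn extracted from the pointwise conditions \eqref{h-condition-2d-xedgecenter}--\eqref{h-condition-2d-yedgecenter} by bounding the numerator $2(a_{i-1,j}+4a_{i,j}+9a_{i+1,j})(3a_{i-1,j}+a_{i+1,j})$ from below, the denominator from above using that its $a$-weights sum to $44$, and $3a_{i-1,j}-4a_{i,j}+3a_{i+1,j}$ from above; the cell-center condition \eqref{h-condition-2d-cellcenter} is treated by the same estimate but has a strictly smaller denominator and is therefore no more restrictive, and the $x\leftrightarrow y$ symmetry of the scheme and of all the constraints reduces everything to a single orientation. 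Note also that when $3a_{i-1,j}-4a_{i,j}+3a_{i+1,j}\le 0$ the ratio inequalities hold automatically, so this quantity may be assumed positive throughout.

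Write $\mu=\min_{J_{ij}}a$, $\nu=\max_{J_{ij}}a$, $M=\max_{J_{ij}}|\nabla a|$, $c=c_{ij}$, so the clean inequality reads $61\mu^2>49\nu^2+8(3\nu-2\mu)h^2c$. For a constant coefficient $\mu=\nu=a$ this collapses to $h^2c<\tfrac32 a$, which is the last bullet verbatim. For the general case the only extra ingredient is the Lipschitz bound $\nu-\mu\le(\operatorname{diam}J_{ij})\,M$, which converts ``$\nu$ close to $\mu$'' into ``$hM/\mu$ small.'' Splitting the budget as $49\nu^2+8(3\nu-2\mu)h^2c\le 49\nu^2+61(1-\lambda)\mu^2$ — the second piece being precisely the first inequality of bullet~1 — reduces the clean inequality to $61\lambda\mu^2\ge 49\nu^2$, i.e.\ $\nu-\mu\le\tfrac{\sqrt{61\lambda}-7}{7}\mu$; inserting the Lipschitz bound together with the size of $\operatorname{diam}J_{ij}$ yields the gradient condition of bullet~1. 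Merging the two halves of the budget for $c\ge 0$ gives bullet~2, and discarding the reaction term (i.e.\ $c\equiv 0$, so $\lambda$ may be sent to $1$) gives bullet~3.

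The step I expect to be the main obstacle is keeping the constants sharp in the passage from \eqref{h-condition-2d-xedgecenter} to the clean inequality — and, correspondingly, in fitting the precise constants in bullets~1--3. One cannot minimize the numerator and maximize $3a_{i-1,j}-4a_{i,j}+3a_{i+1,j}$ separately over $a_{i\pm1,j},a_{i,j}\in[\mu,\nu]$, because the two are extremized at opposite values of $a_{i,j}$: the $+4a_{i,j}$ sitting inside $a_{i-1,j}+4a_{i,j}+9a_{i+1,j}$ and the $-4a_{i,j}$ in $3a_{i-1,j}-4a_{i,j}+3a_{i+1,j}$ must be tracked together, so what is really needed is a joint optimization (or a bilinear rearrangement) on the cube $[\mu,\nu]^3$, combined with the elementary but slightly fiddly geometry of $J_{ij}$ entering the Lipschitz step. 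It is exactly this joint analysis that produces the particular constants $49$, $61$, $\tfrac{49\sqrt2}{3}$ and $\sqrt{122\lambda}-7\sqrt2$, and lining all of them up is the bookkeeping-heavy core of the argument.
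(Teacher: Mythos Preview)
Your plan is correct and mirrors the paper's own route exactly: the paper states that the proof is ``very similar to the proof of Theorem~\ref{1d-thm-mesh-1} thus omitted,'' and that 1D proof reduces first to the clean $\min/\max$ inequality (here already packaged as Theorem~\ref{2d-thm-mesh-1}) and then runs the $\lambda$-splitting and Mean-Value-Theorem arguments you outline, with the constant-$a$ and $c\equiv 0$ bullets falling out as the degenerate cases you identify. Your anticipated ``main obstacle,'' however, is illusory: the constants $49$ and $61$ require no joint optimization on $[\mu,\nu]^3$ --- the paper's proof of Theorem~\ref{2d-thm-mesh-1} simply expands \eqref{h-condition-2d-xedgecenter}, bounds every positive bilinear term below by $\mu^2$ and every negative one above by $\nu^2$, and counts coefficients ($122$ versus $98$, then halved), so the only real bookkeeping left is the Lipschitz/diameter step over $J_{ij}$ that produces the $\sqrt2$ factors.
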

The proof of Theorem \ref{2d-thm-mesh-1} is included in the Appendix \ref{appendix-b}. The proof of Theorem \ref{2d-thm-mesh-2} is very similar to the proof of Theorem \ref{1d-thm-mesh-1} thus omitted. Since the two-dimensional case is more complicated, it does not seem possible to derive a similar mesh constraint involving second order derivatives of $a(x,y)$ as in Theorem \ref{1d-thm-mesh-2}. 
For instance, by Theorem \ref{1d-thm-mesh-2}, if $a(x)>0$ is concave and $c(x)\equiv 0$, then the one-dimensional scheme \eqref{p2fd-vcoef-1dscheme} satisfies $\bar L_h^{-1}\geq 0$ without any mesh constraint. 
For the two-dimensional scheme \eqref{p2fd-vcoef-2dscheme}, even if assuming $a(x,y)>0$ is concave and $c(x,y)\equiv 0$, constraints \eqref{h-condition-2d-cellcenter},  \eqref{h-condition-2d-xedgecenter} and  \eqref{h-condition-2d-yedgecenter}  are not all satisfied for any $h$. 

\section{Numerical test}
\label{sec-test}
In this section we show some numerical tests of scheme \eqref{p2fd-vcoef-2dscheme} on an uniform rectangular mesh and verify the inverse non-negativity of $\mathcal L_h$. See \cite{li2019fourth} for numerical tests on the fourth order accuracy of this scheme. 
In order to minimize round-off errors, we redefine \eqref{p2fd-vcoef-2dscheme-1} to its equivalent expression $
 \mathcal L_h(\bar{\mathbf u})_{i,j}=\frac{1}{h^2}u_{i,j}=\frac{1}{h^2}g_{i,j}$
 so that all nonzero entries in $\bar{L}_h$ have similar magnitudes. 
By Theorem \ref{theorem-fullandsmallmatrix}, we have $L_h^{-1}
\geq 0$  whenever $\bar{L}_h^{-1}\geq 0$. Even though $L_h^{-1}
\geq 0$ is not sufficient to ensure the discrete maximum principle, in practice only $L_h^{-1}$ is used directly thus its positivity is also important.

We first consider the following equation with purely Dirichlet conditions:  
\begin{equation}\label{numtestpoisson}
-\nabla\cdot(a\nabla u) + c u=f\quad \textrm{on } [0,1]\times [0,2]
\end{equation}
where $c(x)\equiv 10$ and $a(x,y)=1+d\cos(\pi x)\cos(\pi y)$ with $d= 0.5, 0.9$, and $0.99$. 
The smallest entries in $L_h^{-1}$ and $\bar{L}_h^{-1}$ are listed in Table \ref{poisson}, in which $-10^{-18}$ should be regarded as the numerical zero.    
As we can see,   $L_h^{-1}
\geq 0$ and $\bar{L}_h^{-1}\geq 0$ are achieved when $h$ is small enough. 
\begin{table}[h]
\label{poisson}
\centering
\caption{Minimum of entries in $\bar L_h^{-1}$ and $L_h^{-1}$  for Poisson equation \eqref{numtestpoisson} with smooth coefficients.}
{\renewcommand{\arraystretch}{1.2}\resizebox{\textwidth}{!}{\begin{tabular}{|c|c|c|c|c|c|c|}
\hline
\multirow{2}{*}{Finite Element Mesh} & \multicolumn{2}{c|}{$d=0.5$} & \multicolumn{2}{c|}{$d=0.9$} & \multicolumn{2}{c|}{$d=0.99$} \\ \cline{2-7}
     & $\bar L_h^{-1}$ & $L_h^{-1}$ & $\bar L_h^{-1}$ & $L_h^{-1}$& $\bar L_h^{-1}$ & $L_h^{-1}$ \\ \hline
$2\times 4$  & -7.32E-18  & 7.48E-06 & -3.90E-04 & 6.37E-06 & -7.41E-04 &  6.14E-06 \\ \hline
 $4\times 8$ & -1.31E-18 & 1.23E-07 & -4.02E-19& 9.95E-08 & -1.65E-04 &  9.44E-08\\ \hline
$8\times 16$ &  -3.96E-19 &  1.91E-09& -4.91E-19 & 1.52E-09& -1.77E-05 & 1.44E-09\\ \hline
$16\times 32$   & -1.92E-19 & 2.98E-11 & -7.60E-19 & 2.35E-11 & -1.06E-18 & 2.22E-11\\
  \hline
\end{tabular}}}
\end{table}

Next we consider \eqref{p2fd-vcoef-2dscheme}  solving \eqref{numtestpoisson} with $c(x,y)\equiv 0$
and $a_{ij}$ being random uniformly distributed random numbers in the interval $(d,d+1)$. Notice that the larger $d$ is, the smaller $\frac{\max\limits_{ij}\{a_{ij}\}}{\min\limits_{ij}\{a_{ij}\}}$ is. When $d=10$, we  have $\frac{\max\limits_{ij}\{a_{ij}\}}{\min\limits_{ij}\{a_{ij}\}} < \sqrt{\frac{61}{49}},$ thus $L_h^{-1}
\geq 0$ and $\bar{L}_h^{-1}\geq 0$ are guaranteed by Theorem \ref{2d-thm-mesh-1}. In Table \ref{rancoef} we can see that the upper bound on $\frac{\max\limits_{ij}\{a_{ij}\}}{\min\limits_{ij}\{a_{ij}\}}$ is indeed a necessary condition to have $\bar{L}_h^{-1}\geq 0$, even though constraints in Theorem \ref{2d-thm-mesh-1} may not be sharp since we still have the positivity when $d=1$. We have tested $d= 0.3$ many times and never observed negative entries in $\bar L_h^{-1}$ and $L_h^{-1}$.
\begin{table}[h]
\label{rancoef}
\centering
\caption{Minimum of all entries of $\bar L_h^{-1}$ and $L_h^{-1}$ for $a(x,y)$ being random coefficients }\resizebox{\textwidth}{!}{
{\renewcommand{\arraystretch}{1.2}\begin{tabular}{|c|c|c|c|c|c|c|}
\hline
\multirow{2}{*}{Finite Element Mesh} & \multicolumn{2}{c|}{$d=0.1$} & \multicolumn{2}{c|}{$d=1$} & \multicolumn{2}{c|}{$d=10$} \\\cline{2-7}
     & $\bar L_h^{-1}$ & $L_h^{-1}$ & $\bar L_h^{-1}$ & $L_h^{-1}$& $\bar L_h^{-1}$ & $L_h^{-1}$ \\ \hline
$2\times 4$  & -1.00E-03  & 6.60E-05 & -8.15E-18 & 4.73E-05
 & -1.98E-16 &  6.74E-06 \\ \hline
 $4\times 8$ & -2.14E-04 & 3.22E-06 & -3.46E-18& 9.95E-07 & -5.10E-17 &  1.35E-07\\ \hline
$8\times 16$ &  -6.73E-05 & 2.88E-08 & -5.24E-19 & 1.65E-08&  -1.81E-17& 2.21E-09\\ \hline
$16\times 32$   &-2.34E-05 & 3.61E-10 & -9.01E-19& 2.02E-10 & -8.37E-18 & 3.56E-11\\
  \hline
\end{tabular}}}
\end{table}

Last we consider solving the heat equation 
$
u_t=\Delta u$ on $[0,1]\times [0,2]
$
with backward Euler time discretization $
- \Delta u^{n+1} + \frac{1}{\Delta t}u^{n+1}  =\frac{u^n}{\Delta t},$
corresponding to \eqref{numtestpoisson} with $a(x,y)\equiv 1$ and $c=\frac{1}{\Delta t}$. 
By Theorem \ref{2d-thm-mesh-2},  
$
\frac{\Delta t}{h^2} > \frac{2}{3},
$
is a sufficient condition to ensure $\bar L_h^{-1}\geq0$ and $L_h^{-1}\geq0$. In Table \ref{heateqn}, we can see that it is necessary to have a lower bound constraint on $\frac{\Delta t}{h^2}$ but
$\frac{\Delta t}{h^2} > \frac{2}{3}$ is not sharp at all.  In Figure \ref{minimumfig},
we can see the minimum of  entries in $\bar L_h^{-1}$ and $L_h^{-1}$ decreases  for smaller $\frac{\Delta t}{h^2}$. The lower bound to ensure the inverse non-negativity of $\bar L_h^{-1}$ and $L_h^{-1}$ seems to be near $\frac{\Delta t}{h^2}=\frac{1}{3.6}$.

\begin{table}[h]
\label{heateqn}
\centering
\caption{Minimum of all entries of $\bar L_h^{-1}$ and $L_h^{-1}$  for solving heat equation with backward Euler.}
{\renewcommand{\arraystretch}{1.2}\begin{tabular}{|c|c|c|c|c|c|c|}
\hline
\multirow{2}{*}{Finite Element Mesh} & \multicolumn{2}{c|}{$\Delta t = \frac{3 h^2}{2}$} & \multicolumn{2}{c|}{$\Delta t= \frac{h^2}{2}$} & \multicolumn{2}{c|}{$\Delta t= \frac{h^2}{4}$} \\ \cline{2-7}
     & $\bar L_h^{-1}$ & $L_h^{-1}$ & $\bar L_h^{-1}$ & $L_h^{-1}$& $\bar L_h^{-1}$ & $L_h^{-1}$ \\ \hline
$2\times 4$  & 0  & 7.95E-06 & 0 & 3.21E-07 & -9.14E-05 &  -5.34E-07 \\ \hline
 $4\times 8$ & 0 & 1.01E-09 & 0 & 1.93E-13 & -2.28E-05 &  -1.00E-07\\ \hline
$8\times 16$ & 0 &  7.74E-17& 0 & 2.58E-25 & -5.71E-06 & -2.51E-08\\ \hline
$16\times 32$& 0 & 2.63E-30 & 0 & 2.73E-48 & -1.43E-06 & -6.27E-09\\
  \hline
\end{tabular}}
\end{table}
 
\begin{figure}
\label{minimumfig}
  \subfigure[Minimum of entries in $\bar L_h^{-1}$]{\includegraphics[scale=0.33]{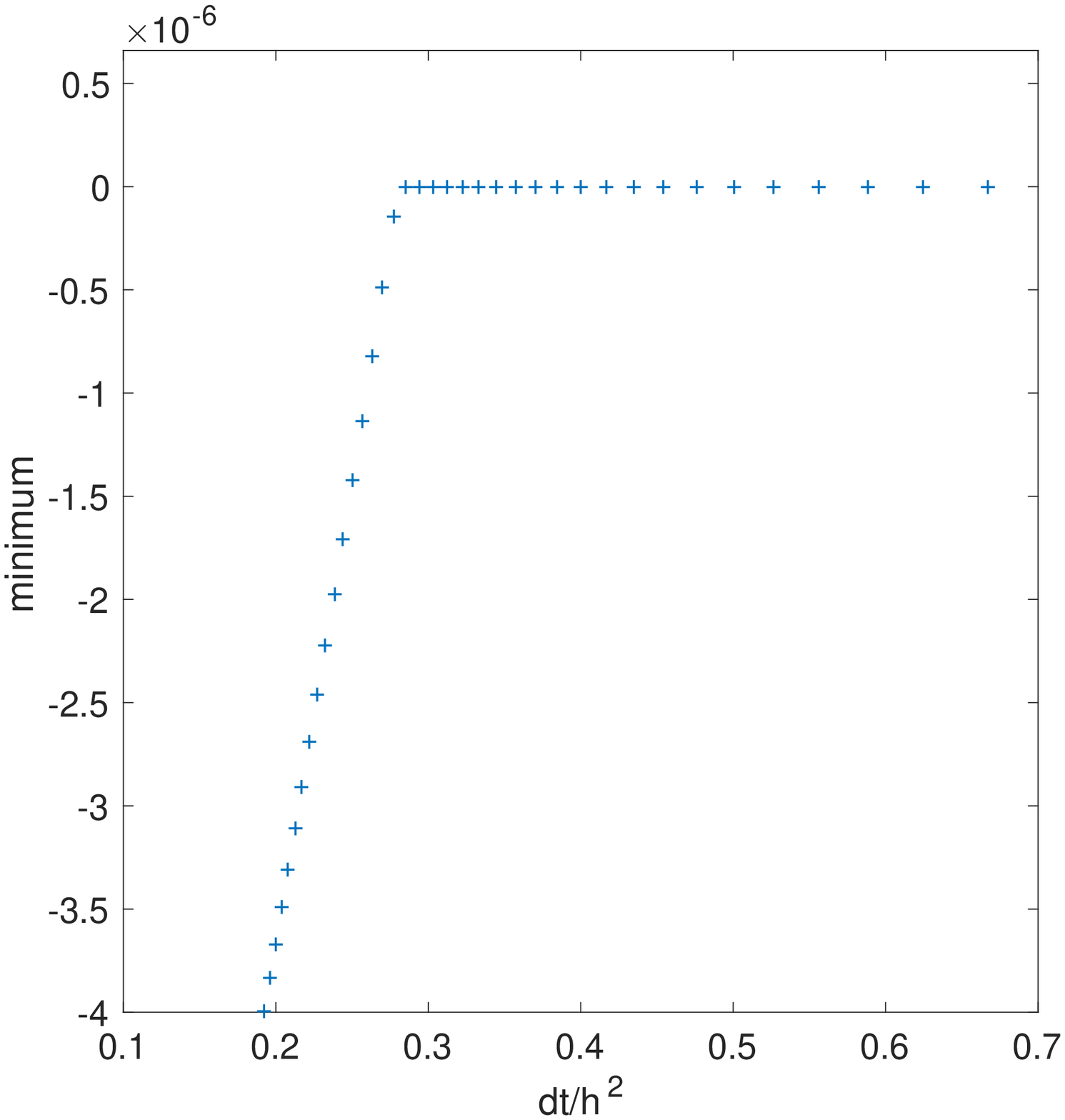}}
   \subfigure[Minimum of  entries in $L_h^{-1}$]{\includegraphics[scale=0.33]{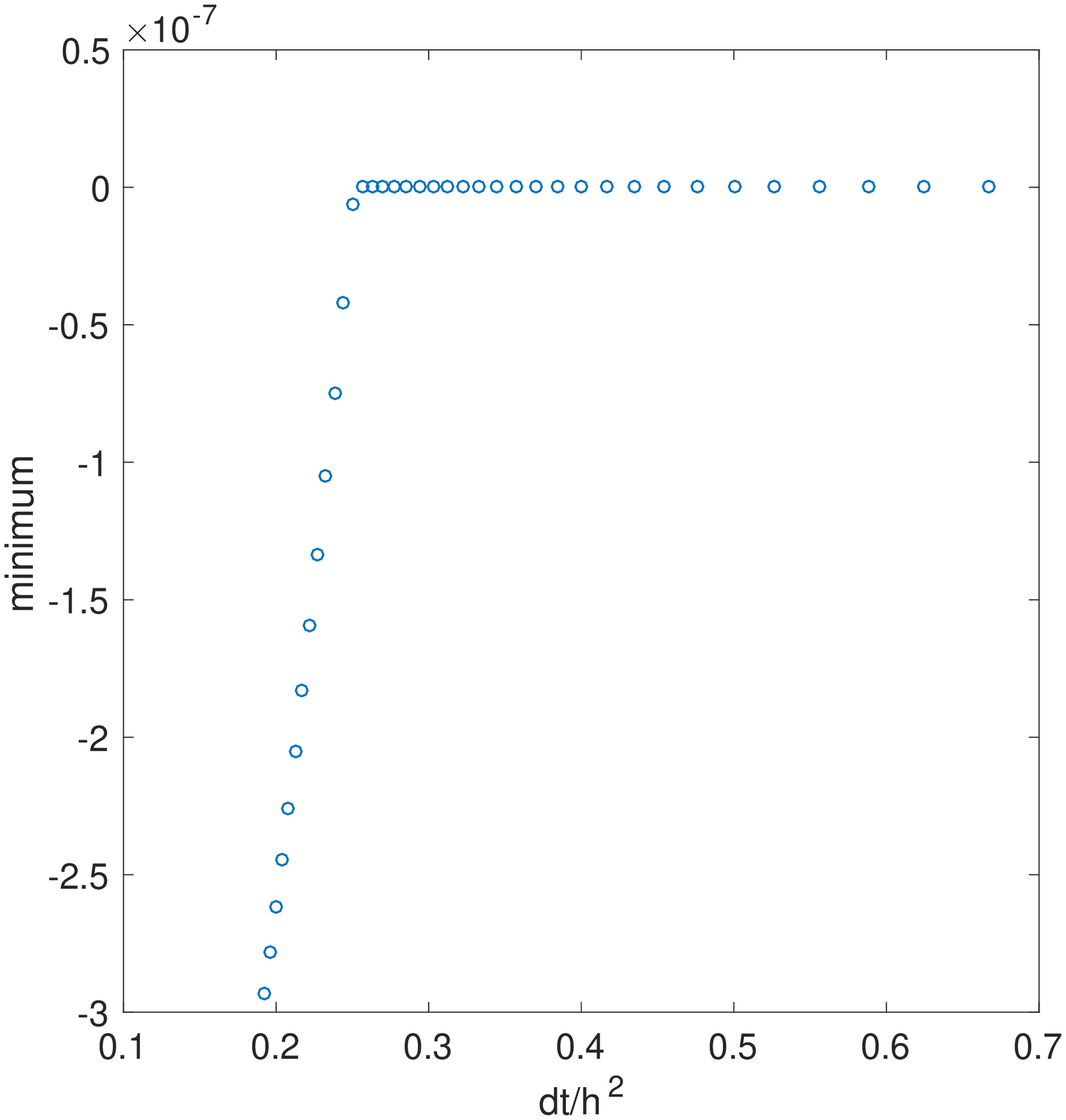}}
   \caption{Minimum of all entries of $\bar L_h^{-1}$ and $L_h^{-1}$ on $16\times 32$ mesh with different time steps.}
\end{figure}

\section{Concluding remarks}
\label{sec-remark}

In this paper we have proven that the simplest fourth order accurate finite difference implementation of $C^0$-$Q^2$ finite element method is monotone thus satisfies a discrete maximum principle for solving a variable coefficient problem $-\nabla \cdot (a(x,y)\nabla u)+c(x,y) u=f$ under some suitable mesh constraints. 
The main results in this paper can be used to construct high order spatial discretization preserving positivity or maximum principle for solving time-dependent diffusion problems implicitly by backward Euler time discretization. 

\appendix
\section{M-Matrix factorization for discrete Laplacian}
\label{appendix-a}
The matrix form of \eqref{p2fd-laplacian-1dscheme} can be written as 
$\frac{1}{h^2} \bar L_h\bar{\mathbf u}=\bar{\mathbf f}$. As  an example, if there are seven interior grid points in the mesh for $(0,1)$, then  the matrix $\bar L_h$ is given by  
\[\bar L_h=\begin{psmallmatrix}
1 &&&&&&&\\
 -1&  2& -1 & & & & & &\\
  \frac14& -2& \frac72 &-2 & \frac14 & & & &\\
  &   &  -1 & 2& -1 & & & &\\
 &  & \frac14 &-2& \frac72 &-2 & \frac14 & &\\
 & & &   &  -1 & 2& -1 & &\\
 & && & \frac14 &-2& \frac72 &-2& \frac14\\
 &  &  & & & &-1 & 2 & -1\\
  &  &  & & & & &  & 1
  \end{psmallmatrix}
\]
The matrix $\bar L_h$ can be written as a product of two nonsingular M-matrices $\bar L_h=M_1 M_2$ where
\[M_1=\begin{psmallmatrix}
 1 & &  & & & & & & \\
 &   1&  & & & & & &\\
 &  -\frac14& 1 &-\frac14 &  & & & &\\
 &    &   & 1&  & & & &\\
 && &-\frac14& 1 &-\frac14 &  & & \\
 &  & & &   & 1&  & & \\
 &&&  &&-\frac14& 1 &-\frac14 &    \\
 & & && & &   & 1  & \\
 & & && & &   &   & 1
   \end{psmallmatrix}, M_2=\begin{psmallmatrix}
 1 &  &  & & & & & & &\\
 -1 &  2& -1 & & & & & & &\\
  & -\frac32& 3 &-\frac32 &  & & & &\\
  &   &  -1 & 2& -1 & & & &\\
 && &-\frac32& 3 &-\frac32 &  & & \\
 &  & & & -1  & 2& -1 & &\\
 &&&  &&-\frac32& 3 &-\frac32  &   \\
 && && & &  -1 & 2 & -1  \\
  && && & &   &  & 1 
   \end{psmallmatrix}.\]
   Such a factorization is  not unique and it does not seem to have further physical or geometrical meanings.
   
% It is more informative to perceive $A_1$ and $A_2$ as linear operators $\mathcal A_1$ and $\mathcal A_2$. 
% The operators are defined as follows. 
%  $\mathcal A_1$ at the mid point:
% \begin{subequations}
% \begin{equation}
% v_i=(\mathcal A_1 u)_i=-\frac12 u_{i-1}+2u_i -\frac12 u_{i+1}
% \end{equation}
%  $\mathcal A_1$ at the cell end:
% \begin{equation}
% v_i=(\mathcal A_1 u)_i=u_i
% \end{equation}
% \end{subequations}
% $\mathcal A_2$ at the mid point:
% \begin{subequations}
% \begin{equation}
% (\mathcal A_2 v)_i=-\frac12 v_{i-1}+v_i -\frac12 v_{i+1}
% \end{equation}
%  $\mathcal A_2$ at the cell end:
% \begin{equation}
% (\mathcal A_2 v)_i=-\frac14 v_{i-2}- v_{i-1}+\frac52v_i -  v_{i+1}-\frac14v_{i+2}
% \end{equation}
% \end{subequations}

% \subsection{2D Constant Coefficient}

For the scheme \eqref{q2fd-2D-laplacian}, we can find  two linear operators $\mathcal A_1$ and $\mathcal A_2$ are  with their matrix representations $A_1$ and $A_2$ being nonsingular M-matrices, such that
$\mathcal L_h(\bar{\mathbf u})=\mathcal A_2 (\mathcal A_1 (\bar{\mathbf u}))$. 

Definition of $\mathcal A_1$ is given as
\begin{itemize}
\item At boundary points:
\[v_{i,j}=\mathcal A_1(\bar{\mathbf u})_{i,j}=u_{i,j}:=g_{ij}.\]
 \item At interior knots:
\begin{equation*}
 v_{i,j}=\mathcal A_1(\bar{\mathbf u})_{i,j}=u_{i,j}.
\end{equation*}
\item
At interior cell center:
\begin{equation*}
v_{i,j}=\mathcal A_1(\bar{\mathbf u})_{i,j}= 2u_{i,j}-\frac14 u_{i-1,j}-\frac14 u_{i+1,j}-\frac14u_{i,j-1}-\frac14u_{i,j+1}.
\end{equation*}
\item 
At interior edge center (an edge parallel to x-axis):
\begin{equation*}
v_{i,j}=\mathcal A_1(\bar{\mathbf u})_{i,j}= -\frac16 u_{i-1,j}+\frac43 u_{i,j}-\frac16 u_{i+1,j}.
\end{equation*}
\item 
At interior edge center (an edge parallel to y-axis):
\begin{equation*}
v_{i,j}=\mathcal A_1(\bar{\mathbf u})_{i,j}= -\frac16 u_{i,j-1}+\frac43 u_{i,j}-\frac16 u_{i,j+1}.
\end{equation*}
\end{itemize}

Definition of $\mathcal A_2$ is given as:
\begin{itemize}
\item
At boundary points:
\[ \mathcal A_2(\bar{\mathbf v})_{i,j}=v_{i,j}.\]
 \item At an interior knot:
\begin{equation*}
 \mathcal A_2(\bar{\mathbf v})_{i,j}=  -\frac32 v_{i-1,j}+3 v_{i,j}- \frac32 v_{i+1,j}- \frac32 v_{i,j-1}+3  v_{i,j}- \frac32  v_{i,j+1} 
 \end{equation*}
\item At an interior cell center:
\begin{equation*}
\mathcal A_2(\bar{\mathbf v})_{i,j}= 2v_{i,j} -\frac38 v_{i-1,j}-\frac38 v_{i+1,j}-\frac38v_{i,j-1}-\frac38v_{i,j+1}-\frac18 v_{i-1,j+1}-\frac18 v_{i+1,j+1}-\frac18v_{i-1,j-1}-\frac18v_{i+1,j+1}.
\end{equation*}
\item At an interior edge center (an edge parallel to x-axis):
\begin{align*}
\mathcal A_2(\bar{\mathbf v})_{i,j} = -\frac{7}{16} v_{i-1,j}+\frac{15}{4}v_{i,j} -\frac{7}{16}  v_{i+1,j} -v_{i,j+1}-v_{i,j-1}-\frac{3}{16}v_{i-1,j-1}-\frac{3}{16}v_{i+1,j-1}\\
-\frac{3}{16}v_{i-1,j+1}-\frac{3}{16}v_{i+1,j+1}-\frac{1}{32}v_{i-1,j+2}-\frac{1}{32}v_{i+1,j+2}-\frac{1}{32}v_{i-1,j-2}-\frac{1}{32}v_{i+1,j-2}.\\
\end{align*}
\item At an interior edge center (an edge parallel to y-axis):
\begin{align*}
\mathcal A_2(\bar{\mathbf v})_{i,j} = -\frac{7}{16} v_{i,j-1}+\frac{15}{4}v_{i,j} -\frac{7}{16}  v_{i,j+1} -v_{i+1,j}-v_{i-1,j}-\frac{3}{16}v_{i-1,j-1}-\frac{3}{16}v_{i-1,j+1}\\
-\frac{3}{16}v_{i+1,j-1}-\frac{3}{16}v_{i+1,j+1}-\frac{1}{32}v_{i+2,j-1}-\frac{1}{32}v_{i+2,j+1}-\frac{1}{32}v_{i-2,j-1}-\frac{1}{32}v_{i-2,j+1}.\\
\end{align*}
\end{itemize}

It is straightforward to verify that $\mathcal L_h(\bar{\mathbf u})=\mathcal A_2(\bar{\mathbf v})$ where $\bar{\mathbf v}=\mathcal A_1 (\bar{\mathbf u})$. Obviously, matrices of $\mathcal A_1$ and  $\mathcal A_2$ have positive diagonal entries and nonpositive off-diagonal entries. Moreover, $\mathcal A_1(\mathbf 1)\geq 0$ and $\mathcal A_2(\mathbf 1)\geq 0$ thus $A_1$ and $A_2$
satisfy the row sum conditions in Theorem \ref{rowsumcondition-thm}. So $A_1$ and $A_2$ are both nonsingular $M$-matrices and the matrix representation of $\mathcal L_h$ is $A_2A_1$. However, this kind of M-matrix factorization cannot be extended to the variable coefficient case.

\section{}
\label{appendix-b}

\begin{proof}[Proof of Theorem \ref{1d-thm-mesh-1}]
 If $c(x)\equiv 0$, then  \eqref{h-condition-2}  reduces to
\[(28 a_{i-1}+20 a_{i+1}) a_i +4a_{i+1}a_{i-1} >9a^2_{i-1}+ 3a^2_{i+1}.
\]
A convenient sufficient condition is to require
\[     52 \min \{a_{i-1}^2,a_i^2, a^2_{i+1}\} >12 \max \{a^2_{i-1},a^2_i, a^2_{i+1}\},\]
which is equivalent to 
\[ \frac{\max \{a_{i-1},a_i, a_{i+1}\}}{\min \{a_{i-1},a_i, a_{i+1}\}}<\sqrt{\frac{13}{3}}.\]
Let $a(x^1)=\max \{a_{i-1},a_i, a_{i+1}\}$ and $a(x^2)=\min \{a_{i-1},a_i, a_{i+1}\}$. Then the inequality above is equivalent to
\[\frac{a(x^1)-a(x^2)}{a(x^2)}< \frac{\sqrt{39}-3}{3}.\]
By the Mean Value Theorem, there is some $\xi\in (x_{i-1}, x_{i+1})$ such that 
$a(x^1)-a(x^2)=a'(\xi)(x^2-x^1)$. Since $|x^2-x^1|\leq 2h$, we have 
$$|a(x^1)-a(x^2)|\leq \max\limits_{x\in (x_{i-1}, x_{i+1})} \left|a'(x)\right |2h.$$
Thus a sufficient condition is to require
\[ h\frac{\max\limits_{x\in (x_{i-1}, x_{i+1})} \left|a'(x)\right |}{\min\limits_{x\in (x_{i-1}, x_{i+1})}a(x)}<\frac{\sqrt{39}-3}{6}. \]

For $c(x)\geq 0$, 
 \eqref{h-condition-2}  reduces to
\[(28 a_{i-1}+20 a_{i+1}) a_i +4a_{i+1}a_{i-1} >9a^2_{i-1}+ 3a^2_{i+1}+4h^2 c_i(3a_{i-1}-4a_i +3a_{i+1}),
\]
for which a sufficient condition is
\begin{equation}
13\min\limits_{I_i} a^2(x) >3 \max\limits_{I_i} a^2(x)+h^2 c_i (6\max\limits_{I_i} a(x)-4\min\limits_{I_i} a(x)). 
\label{appendix-cond1}
 \end{equation}
One sufficient condition for \eqref{appendix-cond1} is to have
\[\exists \lambda\in(0,1), h^2 c_i (6\max\limits_{I_i} a(x)-4\min\limits_{I_i} a(x))<13 (1-\lambda)\min\limits_{I_i} a^2(x), \quad 3 \max\limits_{I_i} a^2(x)<13\lambda \min\limits_{I_i} a^2(x). \]
By similar discussions above, a sufficient condition for  $ 3 \max\limits_{I_i} a^2(x)<13\lambda \min\limits_{I_i} a^2(x)$ is to have $\lambda>\frac{3}{13}$ and
\[ h\frac{\max\limits_{x\in I_i} \left|a'(x)\right |}{\min\limits_{x\in I_i}a(x)}<\frac{\sqrt{39\lambda}-3}{6}.\]
The inequality \eqref{appendix-cond1}  is also equivalent to
\[  10 \min\limits_{I_i} a^2(x) >3 (\max\limits_{I_i} a^2(x)-\min\limits_{I_i} a^2(x))+h^2 c_i (6\max\limits_{I_i} a(x)-4\min\limits_{I_i} a(x)).\]
Let $a^2(x^1)=\max\limits_{I_i} a^2(x)$ and $a^2(x^2)=\min\limits_{I_i} a^2(x)$, then by the Mean Value Theorem on the function $a^2(x)$, there is some $\xi\in (x_{i-1}, x_{i+1})$ such that
$$a^2(x^1)-a^2(x^1)=2a(\xi)a'(\xi)(x^1-x^2)\leq 4h\max\limits_{I_i} a(x)\max\limits_{I_i} |a'(x)|.$$
So it suffices to have
\[  10 \min\limits_{I_i} a^2(x) >12h\max\limits_{I_i} a(x)\max\limits_{I_i} |a'(x)|+h^2 c_i (6\max\limits_{I_i} a(x)-4\min\limits_{I_i} a(x)),\]
which can be simplified to 
  \[ 2 h \max\limits_{I_i} |a'(x)|+h^2 c_i(1-\frac23\frac{\min\limits_{I_i} a(x)}{\max\limits_{I_i} a(x)}) < \frac{5}{3} \frac{\min\limits_{I_i} a^2(x)}{\max\limits_{I_i} a(x)}.\]
If $a(x)\equiv a>0$, it is straightforward to verify that  \eqref{h-condition-2} is equivalent to $h c_i< 5a.$
\end{proof}

\begin{proof}[Proof of Theorem \ref{1d-thm-mesh-2}]
For a smooth coefficient $a(x)$, by Taylor's Theorem, 
\[a(x+h)=a(x)+ha'(x)+\frac12h^2 a''(\xi_1), \xi_1\in[x,x+h],\]
\[a(x-h)=a(x)-ha'(x)+\frac12h^2 a''(\xi_2), \xi_2\in[x-h,x].\]
With the Intermediate Value Theorem for $a''(x)$, we get
\[a(x)=\frac12[a(x+h)+a(x-h)-h^2 a''(\xi)],\quad \xi\in(\xi_2, \xi_1)\subset [x-h, x+h].\]
Thus we can rewrite $a_i$  as 
$a_i=\frac12(a_{i-1}+a_{i+1}-d_i h^2)$
where $$d_i:=\frac{a_{i-1}+a_{i+1}-2a_i}{h^2}=a''(\xi), \textrm{ for some }\xi\in(x_{i-1}, x_{i+1}).$$ 
If $c(x)\equiv 0$, then \eqref{h-condition-2} reduces to
$(28 a_{i-1}+20 a_{i+1}) a_i +4a_{i+1}a_{i-1} >9a^2_{i-1}+ 3a^2_{i+1}.
$
Introducing an arbitrary number $\lambda\in(0,2]$, it is equivalent to 
\begin{align*}
4a_{i+1}a_{i-1}+(4-2\lambda)a_i(7 a_{i-1}+5 a_{i+1}) +2\lambda a_i(7 a_{i-1}+5 a_{i+1}) > 9a^2_{i-1}+ 3a^2_{i+1,}& \\
(12\lambda + 4)a_{i+1}a_{i-1} + (4-2\lambda)a_i(7 a_{i-1}+5 a_{i+1}) + (7\lambda -9)a_{i-1}^2 + (5\lambda -3)a_{i+1}^2 &\\
>\lambda  h^2d_i(7a_{i-1}+5a_{i+1}), &\\
(\frac{4}{\lambda }-2)a_i + a_{i-1}\frac{(5\lambda -3)\theta^2+(12\lambda + 4)\theta + (7\lambda -9)}{\lambda (5\theta + 7)} >  h^2d_i,\quad \theta =\frac{a_{i+1}}{a_{i-1}},& \\
\left(\frac{4}{\lambda}-2\right)a_i + \left(\frac{\frac{41}{5}\theta - 9}{\lambda(5\theta + 7)} +1\right )a_{i-1}+ \left(1-\frac{3}{5\lambda }\right)a_{i+1} >  h^2d_i.&
\end{align*}
Notice that $\frac{\frac{41}{5}\theta - 9}{5\theta + 7} > -\frac97$. By taking $\frac97 \leq \lambda \leq 2$, it suffices to require
\begin{equation}\label{h-condition-3}
(1-\frac{9}{7\lambda})a_{i-1}+ (\frac{4}{\lambda}-2)a_i +  (1-\frac{3}{5\lambda })a_{i+1} >  h^2d_i,
\end{equation}
as a sufficient condition of the above inequalities.
If $a(x)$ is a concave function, then it satisfies 
$a(x_i)=a(\frac{x_{i-1}+x_{i-1}}{2})\geq\frac12a(x_{i-1})+\frac12a(x_{i+1}),$
which implies $a_{i-1}+a_{i+1}-2a_i\leq 0$,
thus
\eqref{h-condition-3} holds trivially. Otherwise,   \eqref{h-condition-3} holds for $\lambda = \frac{9}{7}$ if the following mesh constraint is satisfied:
\[ h^2\max_{x\in(x_{i-1}, x_{i+1})} a''(x) < \frac{74}{45}\min \{a_{i-1},a_i, a_{i+1}\}.\]

If $c(x)\geq 0$, for any $\lambda\in(0,2]$, \eqref{h-condition-2} is equivalent to
\begin{align}
(12\lambda + 4)a_{i+1}a_{i-1} + (4-2\lambda)a_i(7 a_{i-1}+5 a_{i+1}) + (7\lambda -9)a_{i-1}^2 + (5\lambda -3)a_{i+1}^2 & \nonumber\\
> \lambda h^2d_i(7a_{i-1}+5a_{i+1})  +4h^2 c_i(a_{i-1}+a_{i+1}+2 d_i h^2).  &\label{suf1}
\end{align}
If
assuming $ d_i h^2 \leq \frac{74}{45}\min\{a_{i-1},a_i,a_{i+1}\}$, then $ d_i h^2 \leq\lambda_1 a_{i-1}+\lambda_2 a_{i+1}$ for any two positive numbers $\lambda_1, \lambda_2$ satisfying $\lambda_1+\lambda_2=\frac{74}{45}$. 
In particular, for $\lambda_1=\frac{563}{540}$, we get
$  d_i h^2 \leq\frac{563}{540}a_{i-1}+\frac{65}{108}a_{i+1}$, which implies
\[
a_{i-1}+a_{i+1}+2 d_i h^2 \leq \frac{119}{270}(7a_{i-1}+5a_{i+1}).
\]
By replacing $a_{i-1}+a_{i+1}+2 d_i h^2$ by the inequality above in  \eqref{suf1}, we get a sufficient condition for \eqref{suf1} as following:
\begin{align}
(12\lambda + 4)a_{i+1}a_{i-1} + (4-2\lambda)a_i(7 a_{i-1}+5 a_{i+1}) + (7\lambda -9)a_{i-1}^2 + (5\lambda -3)a_{i+1}^2 & \nonumber\\
> \lambda h^2d_i(7a_{i-1}+5a_{i+1})  +4h^2 c_i\frac{119}{270}(7a_{i-1}+5a_{i+1}).  &\label{suf2}
\end{align}
Similar to the derivation of \eqref{h-condition-3}, we can derive a sufficient condition of \eqref{suf2} as
\[  h^2\left(1.5 c_i+\max_{x\in(x_{i-1}, x_{i+1})} a''(x)\right) < \frac{74}{45} \min\{a_{i-1},a_i,a_{i+1}\}.\]
If $d_i\leq 0$, then a sufficient condition for \eqref{suf1} is
\[
\frac{(12\lambda + 4)a_{i+1}a_{i-1} + (4-2\lambda)a_i(7 a_{i-1}+5 a_{i+1}) + (7\lambda -9)a_{i-1}^2 + (5\lambda -3)a_{i+1}^2}{a_{i-1}+a_{i+1}} \\
>4h^2 c_i,\]
from which we can derive a sufficient condition  as
\[4h^2 c_i < (7\lambda - 9 )a_{i-1}+(5-\frac52\lambda)a_i +(5\lambda -3)a_{i+1},\]
for which a sufficient condition by setting $\lambda=2$ is 
$
h^2 c_i < 3 \min\{a_{i-1},a_i,a_{i+1}\}.
$\end{proof}

\begin{proof}[Proof of Theorem \ref{2d-thm-mesh-1}]
Since \eqref{h-condition-2d-cellcenter-1} and \eqref{h-condition-2d-xedgecenter} are equivalent to
\begin{align*}
4(7a_{i-1,j}+5a_{i+1,j})a_{ij}+4a_{i-1,j}a_{i+1,j}+16a_{ij}(a_{i,j-1}+a_{i,j+1})\\
> 9 a^2_{i-1,j} +3 a^2_{i+1,j} + 12 (a_{i-1,j}+a_{i+1,j})(a_{i,j-1}+a_{i,j+1})+4(3a_{i-1,j}-4a_{ij}+3a_{i+1,j})h^2c_{ij}
\end{align*}
and 
\begin{align*}
8a_{i-1,j}a_{i+1,j}+2a_{ij}a_{i-1,j}+4a_{ij}(a_{i,j-2}+4a_{i,j-1}+18a_{i,j}+4a_{i,j+1}+a_{i,j+2})
> 18 a^2_{i-1,j} +6 a^2_{i+1,j} \\
+ 14a_{ij}a_{i+1,j} + 3(a_{i-1,j}+a_{i+1,j})(a_{i,j-2}+4a_{i,j-1}+4a_{i,j+1}+a_{i,j+2})+8(3a_{i-1,j}-4a_{ij}+3a_{i+1,j})h^2c_{ij}.
\end{align*}
A sufficient condition is to require 
 \begin{equation}
 \label{appendix-b-add1}
7\min_{I_{ij}} a(x)^2 >5\max_{I_{ij}}a(x)^2 + \frac{2}{3}(3\max_{I_{ij}}a(x)-2\min_{I_{ij}} a(x))h^2c_{ij}
\end{equation}
for  all cell centers $x_{ij}$ of cell $I_{ij}=[x_{i-1}, x_{i+1}]\times [y_{i-1}, y_{i+1}]$, 
and the following mesh constraints for all edge centers $x_{ij}$:
\begin{equation}
 \label{appendix-b-add2}
61\min_{J_{ij}} a(x)^2 >49\max_{J_{ij}}a(x)^2 + 8(3\max_{J_{ij}}a(x)-2\min_{J_{ij}} a(x))h^2c_{ij},
\end{equation}
where we $J_{ij}$ is the union of two cells: if $x_{ij}$ is an edge center of an edge parallel to $x$-axis, then  $J_{ij}= I_{i,j-1}\cup I_{i,j+1}$; if $x_{ij}$ is an edge center of an edge parallel to $y$-axis, then  $J_{ij}= I_{i-1,j}\cup I_{i+1,j}$.
Notice that \eqref{appendix-b-add2}
implies \eqref{appendix-b-add1}, thus it suffices to have \eqref{appendix-b-add2}
only. 
\end{proof}

% A real square matrix $A$ is a Stieltjes matrix if all its off-diagonal entries are nonpositive and if it is symmetric and positive definite. See P85 on \cite{varga2009matrix} for the following result. 
% \begin{theorem}
%   If $A$ is a Stieltjes matrix then it is also an M-matrix 
% \end{theorem}

\bibliographystyle{siamplain}

\bibliography{references.bib}

\end{document}